\documentclass[reqno]{amsart}
\usepackage{amssymb,amsmath,amsthm,mathrsfs,paralist,cite}
\RequirePackage[colorlinks,citecolor=blue,urlcolor=blue]{hyperref}

\DeclareMathOperator{\Var}{Var}

\newcommand{\N}{\mathbb{N}}

\newcommand{\R}{\mathbb{R}}

\renewcommand{\P}{\mathbb{P}}
\newcommand{\E}{\mathbb{E}}
\renewcommand{\d}{{\rm d}}
\renewcommand{\Re}{\mathrm{Re}}

\author[C.Y. Lee and T. Zhang]{Cheuk Yin Lee \and Tianyi Zhang}
\address{School of Science and Engineering, The Chinese University of Hong Kong (Shenzhen), Longgang, Shenzhen, Guangdong 518172, China}
\email{leecheukyin@cuhk.edu.cn}
\address{School of Science and Engineering, The Chinese University of Hong Kong (Shenzhen), Longgang, Shenzhen, Guangdong 518172, China}
\email{tianyizhang1@link.cuhk.edu.cn}

\title[Sharp moduli of continuity]{Sharp moduli of continuity for Gaussian fields and stochastic PDEs via correlation bounds}

\newtheorem{theorem}{Theorem}[section]

\newtheorem{assumption}[theorem]{Assumption}
\newtheorem{lemma}[theorem]{Lemma}

\newtheorem{proposition}[theorem]{Proposition}

\newtheorem{remark}[theorem]{Remark}

\newtheorem{example}[theorem]{Example}
\numberwithin{equation}{section}

\keywords{Modulus of continuity; law of the iterated logarithm; Gaussian random fields; stochastic PDEs; localization; decorrelation; local nondeterminism; Volterra processes}
\subjclass{60G15, 60G60, 60G17, 60H15}

\begin{document}

\begin{abstract}
    Exact uniform and local moduli of continuity for anisotropic Gaussian random fields are established under a general framework based on correlation bounds for pairwise increments. This framework does not necessarily require strong local nondeterminism (SLND), stationarity of increments, or spectral-type representations.
    As an application, we solve an open problem about sharp moduli of continuity for a class of linear stochastic PDEs in a particular case where the solution is $C^{1-}$ in space and SLND is not available.
    In this case, we establish decorrelation of the spatial increments via new localization estimates, which may be of independent interest.
    We also briefly discuss an example about Gaussian Volterra processes.
\end{abstract}
\maketitle

\setcounter{tocdepth}{3}
\let\oldtocsection=\tocsection
\let\oldtocsubsection=\tocsubsection
\let\oldtocsubsubsection=\tocsubsubsection

\renewcommand{\tocsection}[2]{\hspace{0em}\oldtocsection{#1}{#2}}
\renewcommand{\tocsubsection}[2]{\hspace{2.5em}\oldtocsubsection{#1}{#2}}

\section{Introduction}

Continuity and regularity properties are fundamental in the studies of stochastic processes.
For the standard Brownian motion $\{B(t)\}_{t \ge 0}$, L\'evy's uniform modulus of continuity (see, e.g., \cite[Theorem I.2.7]{RY}) states that
\begin{align*}
    \lim_{h \to 0^+} \sup_{t, s \in [0,1]: 0<|t-s| \le h} \frac{|B(t)-B(s)|}{\sqrt{|t-s| \log(1/|t-s|)}} = \sqrt{2} \quad \text{a.s.}
\end{align*}
The local modulus of continuity is given by the law of the iterated logarithm (see, e.g., \cite[Theorem II.1.9]{RY}): For each fixed $t_0 \ge 0$,
\begin{align*}
    \lim_{h \to 0^+} \sup_{t: 0<|t-t_0| \le h} \frac{|B(t)-B(t_0)|}{\sqrt{|t-t_0| \log(1/|t-t_0|)}} = \sqrt{2} \quad \text{a.s.}
\end{align*}
If $\{X(t)\}_{t \ge 0}$ is a Gaussian process with $\E[(X(t)-X(s))^2]^{1/2} \asymp \varphi(t-s)$ for $0\le s\le t\le \varepsilon\le 1$ and $\varphi$ is non-decreasing, it is a well-known fact \cite{Fernique, Dudley, MS70, MS72} that $X$ is a.s.~continuous on $[0\,,\varepsilon]$ if and only if
\begin{align}\label{Fernique}
    \int_0^\varepsilon \frac{\varphi(r) \, \d r}{r \sqrt{\log(1/r)}} < \infty.
\end{align}
Further results for uniform and local moduli of continuity for Gaussian processes and Gaussian random fields can be found in \cite{OP73, Nisio, M68, M70, MR, MR92, Kono, SW70, MS72, MWX13, LWX15, LMX18, LX23, TTV04, H23}.
In particular, some of the sharp results in \cite{Kono, M68, M70, Nisio, MR} require concavity of $\varphi^2$ or stationarity of increments; \cite{MWX13, LX23} have developed a framework regardless of concavity and based on strong local nondeterminism to establish, among other things, exact moduli of continuity for anisotropic Gaussian random fields including linear stochastic partial differential equations (SPDEs) driven additive Gaussian noise.
See, e.g., \cite{AX17, TX17, LanX19, HSWX20, LX19, L22, HL, GSWX25, CLX, TTV04} for further results related to SPDEs.

Recall that a centered Gaussian random field $\{X(t)\}_{t \in I \subset \R^N}$ is said to satisfy \emph{local nondeterminism} (LND) if for every $n \in \N$, there exists $C_n>0$ such that
\begin{align}\label{LND}
    \Var(X(t)|X(t_1),\dots, X(t_n)) \ge C_n \min_{1\le i \le n}\Var(X(t)-X(t_i)) 
\end{align}
uniformly for all $t,t_1,\dots, t_n \in I$; it is said to satisfy \emph{strong local nondeterminism} (SLND) if the constant $C_n=C_0>0$ does not depend on $n$ \cite{Berman, Pitt, MP87, CD82}.
The SLND property makes it possible to study sample path properties of many Gaussian processes without independent increments or Markov property.
See \cite{X08, X09} and the references therein for more information, and \cite{LX23, LX, L22H, KLPX, LL, LT, KL} for some recent developments and applications of SLND.

However, SLND may not be easy to prove for certain Gaussian processes and SPDEs.
The goal of this paper is to develop a framework without necessarily requiring SLND to establish exact uniform and local moduli of continuity for general Gaussian random fields and SPDEs.

Throughout, $X=\{X(t)\}_{t \in \R^N}$ denotes a real-valued centered Gaussian random field defined on a complete probability space $(\Omega\,, \mathcal{F}\,,\P)$ with continuous sample functions.
Let $I$ be a fixed compact $N$-dimensional interval in $\R^N$.
We have the following result for exact uniform modulus of continuity:

\begin{theorem}\label{th:um}
    Suppose $\E[(X(t)-X(s))^2]^{1/2} \asymp \varphi(|t-s|)$ for all $t,s \in I$, where $\varphi$ is non-decreasing, continuous and regularly varying at 0 of index $\theta \in [0\,,1]$ with
    \begin{align}\label{um:phi}
        \int_0^h \frac{\varphi(r) \, \d r}{r \sqrt{\log(1/r)}} \lesssim \varphi(h) \sqrt{\log(1/h)} = o(1) \quad \text{as $h \to 0^+$.}
    \end{align}
    If there exist $C_0>0$, $\rho >0$, $a_0>0$, and $h_0 \in (0\,,1]$ such that
    \begin{align}\label{um:cv}
        \Var(X(t)-X(s)|X(t')-X(s')) \ge C_0 \Var(X(t)-X(s))
    \end{align}
    uniformly for all $h \in (0\,,h_0]$ and for all $t,s,t',s' \in I$ with $|t-s| \le h$, $|t'-s'| \le h$ and $|s-s'| \ge a_0 h^\rho$, then
    \begin{align}\label{um}
        \lim_{h\to0^+}\sup_{t,s \in I: 0<|t-s| \le h} \frac{|X(t)-X(s)|}{\varphi(|t-s|) \sqrt{\log(1/|t-s|)}} = K_1 \quad \text{a.s.}
    \end{align}
    where $0<K_1<\infty$ is a constant.
\end{theorem}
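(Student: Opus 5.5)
The proof splits into three parts: an upper bound, a lower bound, and a zero–one argument showing the limit is an almost-sure constant.

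\textbf{Upper bound.} Set $\omega(h) = \varphi(h)\sqrt{\log(1/h)}$. We show the $\limsup$ in \eqref{um} is a.s.\ finite. Apply a chaining (Dudley/Fernique type) argument to the increment field $(t,s)\mapsto X(t)-X(s)$ on the set $\{|t-s|\le h\}$, with the canonical metric controlled by $\varphi$.
\begin{itemize}
\item The entropy integral in this regime is $\int_0^{h}\varphi(r)\sqrt{\log(1/r)}\,\d r/r$ plus a term $\varphi(h)\sqrt{\log(1/h)}$. The latter comes from the $\asymp h^{-N}$ balls of radius $h$.
\item By \eqref{um:phi} the whole bound is $\lesssim\omega(h)$.
\item Borell--TIS concentration along the dyadic sequence $h=2^{-n}$, followed by Borel--Cantelli, gives $\sup_{|t-s|\le h}|X(t)-X(s)|\lesssim\omega(h)$ eventually.
\item Regular variation of $\varphi$ lets us replace $\omega(h)$ by $\omega(|t-s|)$ inside the sup. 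Split dyadically according to $|t-s|\in(2^{-k-1},2^{-k}]$ and use $\omega(2^{-k})\asymp\omega(|t-s|)$.
\end{itemize}
This gives $\limsup<\infty$.

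\textbf{Lower bound.} We show the $\liminf$ is a.s.\ positive, and this is where \eqref{um:cv} is used.
\begin{itemize}
\item Fix small $h$. Pick points $s_1,\dots,s_m\in I$ with mutual separation $\ge a_0h^\rho$, so $m\asymp h^{-N\rho}$, and set $t_i=s_i+he_1$.
\item The increments $Y_i=X(t_i)-X(s_i)$ have variance $\asymp\varphi(h)^2$. By \eqref{um:cv}, every pair has correlation bounded away from $\pm1$ uniformly: $|\mathrm{corr}(Y_i,Y_j)|\le\sqrt{1-C_0}$.
\item Pairwise correlation bounds alone do not yield near-independence. The plan is to reduce to Slepian/Sudakov-type comparison. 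Sudakov minoration needs $\E(Y_i/\sigma_i-Y_j/\sigma_j)^2\ge c>0$ for $i\neq j$, and \eqref{um:cv} gives $2-2|\mathrm{corr}|\ge c$. Hence $\E\max_i Y_i/\varphi(h)\ge c'\sqrt{\log m}\asymp\sqrt{\rho N\log(1/h)}$.
\item Borell--TIS concentration of $\max_i Y_i$ holds with variance $\lesssim\varphi(h)^2$, with fluctuations of order $\varphi(h)\sqrt{\log n}$ at scale $h=2^{-n}$. Since $\sqrt{\log n}=o(\sqrt n)$, this yields $\P(\max_iY_i\le c''\omega(h))\le e^{-cn}$, which is summable.
\item Borel--Cantelli then gives $\liminf>0$ a.s.
\end{itemize}
I expect the main obstacle to be exactly this passage from pairwise conditional-variance bounds to a lower bound on the maximum. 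It needs to be checked carefully: Sudakov only needs pairwise separation, which is why a pairwise hypothesis suffices in place of SLND.

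\textbf{Constant limit.} Finally we show the limit exists and equals a deterministic constant $K_1\in(0,\infty)$. Set $K^\pm$ to be the $\limsup$ and $\liminf$ of the ratio in \eqref{um}.
\begin{itemize}
\item Both $K^\pm$ are measurable with respect to the germ/tail behaviour of the increments at vanishing scales.
\item Use the decorrelation \eqref{um:cv} together with a Kolmogorov zero-one type argument for Gaussian fields. Concretely: write $X$ via its Karhunen--Lo\`eve expansion. Any finite set of components is a.s.\ $C^\infty$-negligible relative to $\omega$ only if the components are smoother than $\varphi$; this holds because each component has a modulus $o(\omega)$ by the same chaining argument. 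So $K^\pm$ are a.s.\ constant.
\item To show $K^+=K^-$, apply a refined version of the two previous steps with sharp constants. The upper-bound argument with $\epsilon$-nets gives $\limsup\le K^+$ attained along all sequences $h\to0$. For the lower bound, along a fixed subsequence where the sup is near $K^+$, interpolate using the monotonicity in $h$ of $\sup_{|t-s|\le h}$ and the regular variation of $\omega$, so the full limit equals $K^+$.
\end{itemize}
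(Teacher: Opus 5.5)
Your proposal is correct. Its skeleton matches the paper's: an entropy/chaining upper bound, a lower bound driven by the pairwise decorrelation \eqref{um:cv}, and constancy via a Karhunen--Lo\`eve expansion plus Kolmogorov's zero-one law. The genuine difference is in the lower bound.

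The paper deduces the statement from the anisotropic Theorem~\ref{th:um:aniso} with $\phi_i=\varphi$. There the normalized increments are compared, via Slepian's lemma in its probability form, with the explicit equicorrelated family $Z_i=(1-C_0)^{1/4}\xi_0+(1-\sqrt{1-C_0})^{1/2}\xi_i$. The maximum of the i.i.d.\ part is handled by direct Gaussian tail bounds, giving $\P(E_n)\to1$. The a.s.\ statement then follows because the events $E_n$ decrease, so no Borel--Cantelli is needed.

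You instead use Sudakov minoration, which, like Slepian here, needs only the one-sided bound $\mathrm{corr}\le\sqrt{1-C_0}$, i.e.\ pairwise $L^2$-separation of the normalized increments. This gives $\E\max_i Y_i/\sigma_i\gtrsim\sqrt{\log m}\asymp\sqrt{\log(1/h)}$. You then use Borell--TIS for the lower tail and Borel--Cantelli along $h=2^{-n}$, which works because the failure probability is polynomial in $h$.

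Your route is more black-box and works directly with $|t-s|$, avoiding the anisotropic reduction. The paper's route produces the explicit lower constant $C_1[2\rho(1-\sqrt{1-C_0})]^{1/2}$ (in the $\psi$-normalization). Classical Sudakov gives the same form only up to an unspecified universal factor. If you want the sharp constant, replace Sudakov by Sudakov--Fernique against i.i.d.\ $(1-\sqrt{1-C_0})^{1/2}\xi_i$, which is in effect the paper's comparison. Your upper bound (Dudley in expectation, Borell--TIS, Borel--Cantelli, dyadic splitting of $|t-s|$) is equivalent to the paper's use of the almost-sure entropy modulus of Adler--Taylor together with Lemma~\ref{lem:m-mod}.

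A few slips to fix:
\begin{enumerate}
\item After the substitution $\varepsilon\asymp\varphi(r)$ and integration by parts, the entropy integral is $\varphi(h)\sqrt{\log(1/h)}+\int_0^h\varphi(r)\,\d r/(r\sqrt{\log(1/r)})$, with $\sqrt{\log(1/r)}$ in the denominator. That is exactly what \eqref{um:phi} controls. The integral you wrote, with $\sqrt{\log(1/r)}$ in the numerator, can diverge when $\theta=0$, e.g.\ for $\varphi(r)=(\log(1/r))^{-a}$ with $1/2<a\le 3/2$.
\item In the zero-one step, chaining applied to a single Karhunen--Lo\`eve component $f_n\xi_n$ in general yields only a modulus of order $\varphi(h)\sqrt{\log(1/h)}$, not $o(\varphi(h)\sqrt{\log(1/h)})$. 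The needed negligibility comes from the deterministic bound $|f_n(t)-f_n(s)|\le\sigma(t\,,s)\lesssim\varphi(|t-s|)$, which follows from $\sigma^2(t\,,s)=\sum_n(f_n(t)-f_n(s))^2$. Condition \eqref{um:cv} plays no role there.
\item No argument is needed for $K^+=K^-$. The map $h\mapsto\sup_{0<|t-s|\le h}(\cdots)$ is monotone, so the limit exists pathwise, and only its constancy and the bounds $0<K_1<\infty$ require proof.
\end{enumerate}
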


The next result yields exact local modulus of continuity:

\begin{theorem}\label{th:lm}
    Fix $t_0 \in I$. Suppose $\E[(X(t)-X(s))^2]^{1/2} \asymp \varphi(|t-s|)$ for all $t,s \in I$, where $\varphi$ is non-decreasing, continuous and regularly varying at 0 of index $\theta \in [0\,,1]$ with
    \begin{align}\label{lm:phi}
        \int_0^1 \frac{\varphi(hr) \, \d r}{r\sqrt{\log(2/r)}} \lesssim \varphi(h)\sqrt{\log\log(1/h)}=o(1) \quad \text{as $h\to0^+$.}
    \end{align}
    If there is a sequence $(t_n)_{n=n_0}^\infty$ in $I$ with $|t_n-t_0| = a^{-n}$ for some $a>1$, and there exists $C_0>0$ such that
    \begin{align}\label{lm:cv}
        \Var(X(t_n)-X(t_0)|X(t_m)-X(t_0))
        \ge C_0 \Var(X(t_n)-X(t_0))
    \end{align}
    uniformly for all $m>n\ge n_0$, then
    \begin{align}\label{lm}
        \lim_{h \to 0^+} \sup_{t \in I: 0<|t-t_0|\le h} \frac{|X(t)-X(t_0)|}{\varphi(|t-t_0|) \sqrt{\log\log(1/|t-t_0|)}} = K_2 \quad \text{a.s.}
    \end{align}
    where $0<K_2<\infty$ is a non-random number possibly depending on $t_0$.
\end{theorem}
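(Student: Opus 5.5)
We prove Theorem~\ref{th:lm} in three steps: an upper bound with a finite constant, a lower bound with a positive constant, and a zero-one argument showing that the limit is a.s.\ a deterministic constant $K_2$.

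\textbf{Step 1: upper bound.} Fix $a>1$ and consider the dyadic-type shells $A_n=\{t\in I: a^{-n-1}<|t-t_0|\le a^{-n}\}$. On $A_n$, set $Y(t)=X(t)-X(t_0)$. The canonical metric of $Y$ satisfies $d(t,s)\lesssim\varphi(|t-s|)$, and $\sup_{A_n}\sigma_Y\lesssim\varphi(a^{-n})$. Dudley's entropy bound, after the substitution $r\mapsto a^{-n}r$, gives
\[
\E\sup_{t\in A_n}|Y(t)| \lesssim \int_0^1 \frac{\varphi(a^{-n}r)\,\d r}{r\sqrt{\log(2/r)}} \lesssim \varphi(a^{-n})\sqrt{\log n},
\]
by \eqref{lm:phi}. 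The Borell--TIS inequality then yields
\[
\P\Bigl(\sup_{A_n}|Y|>C\varphi(a^{-n})\sqrt{\log n}\Bigr)\le n^{-2}
\]
for $C$ large. Borel--Cantelli, together with regular variation (so that $\varphi(|t-t_0|)\asymp\varphi(a^{-n})$ on $A_n$) and $\log\log(1/|t-t_0|)\sim\log n$, gives $\limsup\le C<\infty$ a.s.

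\textbf{Step 2: lower bound.} Use the given sequence $t_n$ and the normalized variables $Z_n=Y(t_n)/\sigma_n$ with $\sigma_n=\Var(Y(t_n))^{1/2}\asymp\varphi(a^{-n})$. Passing to a subsequence $n_k=kL$ with $L$ large fixed costs only constants, since $\log\log$ is unchanged to leading order. The key point is that \eqref{lm:cv} implies $|\mathrm{Corr}(Z_n,Z_m)|\le\sqrt{1-C_0}<1$ for all $m>n$. We need a lower bound of the form
\[
\P\Bigl(\max_{k\le M} Z_{n_k}\le \varepsilon\sqrt{2\log\log a^{n_M}}\Bigr)\to0
\]
for small $\varepsilon$, which is a weak-dependence anti-concentration statement. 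The approach is the standard one: compare with an i.i.d.\ family via a Gaussian comparison (Slepian or Sudakov--Fernique). Since the correlations are bounded by $\rho_0=\sqrt{1-C_0}$ but not decaying, we cannot use Berman's lemma directly. Instead we use the representation trick: a Gaussian vector with all pairwise correlations at most $\rho_0$ dominates, in the sense of Sudakov's minoration, $\E\max_{k\le M}Z_{n_k}\ge c\sqrt{(1-\rho_0)\log M}$, because the canonical distances satisfy $\|Z_{n_k}-Z_{n_j}\|_2\ge\sqrt{2(1-\rho_0)}$. Combined with Borell--TIS concentration of $\max_{k\le M}Z_{n_k}$ around its mean (fluctuations of order $1$), taking $M=2^j$ and Borel--Cantelli along $j$ gives that $\max_{k\le 2^j}Z_{n_k}\ge c'\sqrt{\log 2^j}\asymp c'\sqrt{\log\log a^{n_{2^j}}}$ infinitely often, and in fact eventually. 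A monotonicity interpolation between $2^j$ and $2^{j+1}$ then shows $\limsup_k Z_{n_k}/\sqrt{\log\log a^{n_k}}\ge c''>0$. I expect this step, deducing a lim sup of individual terms from a bound on running maxima, to be the main obstacle. I would handle it by noting that for every $j$ the maximum over the block $2^{j-1}<k\le 2^j$ is already large, which is again Sudakov applied to that block, and is attained at some $n_k$ with $\log\log a^{n_k}\asymp j$.

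\textbf{Step 3: constancy of the limit.} Let $\Lambda$ denote the limit in \eqref{lm}. We must show that $\Lambda$ is a.s.\ constant. Since $\lim_{h\to0}$ of a supremum over $0<|t-t_0|\le h$ depends only on the germ of $Y$ at $t_0$, $\Lambda$ is measurable with respect to the germ $\sigma$-field $\bigcap_h\sigma(Y(t):|t-t_0|\le h)$. Moreover, $\Lambda$ is unchanged if we remove from $Y$ any finite-rank Gaussian component continuous in $t$ with $|Y_f(t)|=o(\varphi(|t-t_0|)\sqrt{\log\log})$. To conclude, we use a Gaussian zero-one law: for a centered Gaussian process and a seminorm-type functional $F$ such as $\limsup$ of a ratio, the event $\{F\le c\}$ is measurable and invariant under translations by the Cameron--Martin space directions that are small near $t_0$. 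The zero-one law for Gaussian measures on measurable linear subspaces (Kallianpur's zero-one law) then gives $\P(\Lambda\le c)\in\{0,1\}$ for each $c$, since $\{\Lambda\le c\}$ is convex and symmetric up to the $\limsup$ structure and $\{\Lambda<\infty\}$ is a linear subspace. Hence $\Lambda=K_2$ a.s., and Steps 1--2 show $0<K_2<\infty$.
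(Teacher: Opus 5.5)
Your Steps 1 and 2 are sound; Step 3 has a genuine gap.

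\textbf{Step 1.} This is essentially the paper's upper-bound argument: Dudley on the ball of radius $a^{-n}$, Borell--TIS, then Borel--Cantelli over geometric shells.

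\textbf{Step 2.} This is a legitimate alternative to the paper's lower bound. The paper applies Slepian's lemma to compare the normalized increments with the equicorrelated sequence $(1-C_0)^{1/4}\xi_0+(1-\sqrt{1-C_0})^{1/2}\xi_n$, then computes with the independent $\xi_n$. That gives an explicit lower constant in terms of $C_0$ and $C_1$. Your Sudakov minoration plus Borell--TIS concentration gives only a universal constant times $C_1\sqrt{1-\sqrt{1-C_0}}$. That is still enough for $K_2>0$, and it needs only the one-sided bound $\mathrm{Corr}(Z_n,Z_m)\le\sqrt{1-C_0}$.

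The step you flag as the main obstacle is in fact harmless. Suppose $\max_{k\le 2^j}Z_{n_k}\ge c'\sqrt{j}$ for all large $j$. The maximizing index $k^*_j\le 2^j$ must tend to infinity, since each fixed $Z_{n_k}$ is finite. It also satisfies $\log\log a^{n_{k^*_j}}\lesssim j$. So the $\limsup$ bound follows directly from the running-maximum bound.

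\textbf{The gap is in Step 3.} Kallianpur's zero-one law concerns measurable linear subspaces (or additive subgroups). It gives $\P(\Lambda<\infty)\in\{0,1\}$, but nothing about $\{\Lambda\le c\}$. That set is convex and symmetric but not a subgroup, and convex symmetric sets such as norm balls typically have Gaussian measure strictly between $0$ and $1$. Measurability with respect to the germ $\sigma$-field at $t_0$ does not rescue this, since that $\sigma$-field need not be trivial for a general Gaussian field.

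What actually forces constancy is the translation invariance you mention but never use:
\begin{itemize}
\item Write $X=\sum_n f_n\xi_n$ by Karhunen--Lo\`eve, with $\xi_n$ i.i.d.\ standard Gaussian and $f_n$ continuous.
\item Since $|f_n(t)-f_n(t_0)|\le\sigma(t,t_0)\lesssim\varphi(|t-t_0|)$ and $\log\log(1/|t-t_0|)\to\infty$, each partial sum $X_N$ has $\Lambda(X_N)=0$.
\item Because $\Lambda$ is subadditive and symmetric, $\Lambda(X)=\Lambda(X-X_N)$, which is $\sigma(\xi_{N+1},\xi_{N+2},\dots)$-measurable for every $N$.
\item Hence $\Lambda$ is tail-measurable, and Kolmogorov's zero-one law makes it a.s.\ constant.
\end{itemize}
This is precisely the paper's Lemma~\ref{lem:01}(ii). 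With it in place of your Kallianpur argument, your Steps 1--2 complete the proof.
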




\begin{remark}
    Both conditions \eqref{um:cv} and \eqref{lm:cv}, which involve only conditional variance of pairwise increments, can be deduced from LND \eqref{LND} (see Proposition \ref{prop:SLND}) and are equivalent to correlation bounds for the increments according to Lemma \ref{lem:cv:cov} below.
    Both conditions allow to apply Slepian's lemma to prove sharpness (lower bound) of the moduli of continuity.
\end{remark}

\begin{remark}
    If $\varphi(h)\sqrt{\log(1/h)}$ (resp.~$\varphi(h)\sqrt{\log\log(1/h)}$) is increasing near 0, then Theorem \ref{th:um} (resp.~Theorem \ref{th:lm}) also yields the exact m-modulus of continuity
    \[
        \limsup_{h\to0^+} \sup_{t,s\in I: 0<|t-s|\le h}\frac{|X(t)-X(s)|}{\varphi(h)\sqrt{\log(1/h)}} = K_1
    \]
    (resp.
    \[
        \limsup_{h\to0^+} \sup_{t\in I: 0<|t-t_0|\le h} \frac{|X(t)-X(t_0)|}{\varphi(h)\sqrt{\log\log(1/h)}} = K_2\, );
    \]
    see Lemma \ref{lem:m-mod} below.
\end{remark}

Theorems \ref{th:um} and \ref{th:lm} are special cases of more general results given by Theorems \ref{th:um:aniso} and \ref{th:lm:aniso} below for anisotropic Gaussian random fields, which also yield upper and lower bounds for the constants $K_1$ and $K_2$ above.
Both Theorems \ref{th:um} and \ref{th:lm} are applicable under the SLND assumption (see Proposition \ref{prop:SLND}).
These results do not necessarily require SLND, stationarity of increments, or spectral-type representations, and therefore provide a more general framework compared to \cite{MWX13, LX23}.

Furthermore, our results enable us to solve a particular case of an open problem about the exact uniform moduli of continuity for a class of linear SPDEs.
Specifically, let $\alpha \in (0\,,2]$, $H \in (1/2\,,1)$, and consider the SPDE
\begin{align}\label{SPDE}
    \partial_t u(t\,,x) = -(-\Delta)^{\alpha/2} u(t\,,x) + \dot{W}(t\,,x) \quad t \ge 0, x \in \R,
\end{align}
with initial condition $u(0\,,\cdot) = 0$, where $-(-\Delta)^{\alpha/2}$ denotes fractional Laplacian and $\dot{W}$ is a centered generalized Gaussian noise that is fractional in time and white in space, with covariance
\begin{align*}
    \E[\dot{W}(t\,,x) \dot{W}(s\,,y)] = |t-s|^{2H-2} \delta(x-y).
\end{align*}
A pointwise defined mild solution to \eqref{SPDE} exists if and only if $2H\alpha > 1$, and the exact moduli of continuity for $u$ is known when $1<2H\alpha<3$ (see \cite{LX23, HSWX20} and Section \ref{s:SPDE} below under a more general setting).
When $2H\alpha = 3$, it is known that for a fixed compact rectangle $I\times J \subset (0\,,\infty) \times \R$,
\[
    \E[(u(t\,,x)-u(s\,,y))^2]^{1/2} \asymp \phi((t\,,x)\,,(s\,,y))
\]
uniformly for all $(t\,,x), (s\,,y) \in I \times J$, where
\[
    \phi((t\,,x)\,,(s\,,y)) := |t-s|^{1/\alpha} \vee |x-y|\sqrt{\log_+( 1/|x-y|)}.
\]
Throughout, we write $a \vee b := \max\{a\,,b\}$ and $\log_+(a) := \log(a \vee e)$.
In this case, the Kolmogorov continuity theorem implies that $u\in C^{1/\alpha-, 1-}$~a.s.
As was discussed in \cite{H23, HSWX20, CLX}, when $2H\alpha = 3$, whether the solution satisfies SLND in $x$ is an open problem, and thus the exact uniform modulus of continuity in $x$ is also open.
Also, the modulus of continuity results in \cite{M70, M68, MR, Kono} cannot be applied because the function $h^2\log_+(1/h) \asymp \E[(u(t\,,x+h)-u(t\,,x))^2]$ is not concave.
Despite the absence of SLND, we are able to apply our results to solve the exact uniform modulus of continuity:

\begin{theorem}\label{th:um:SPDE}
Suppose $2H\alpha =3$. Fix a compact rectangle $I\times J \subset (0\,,\infty) \times \R$, $t_0 > 0$ and $x_0 \in \R$.
Then, a.s.,
\begin{align}\label{um:SPDE:x}
    \lim_{h\to0^+} \sup_{x,y \in J: 0<|x-y|\le h} \frac{|u(t_0\,,x)-u(t_0\,,y)|}{|x-y|\log(1/|x-y|)} = K_3,
\end{align}
\begin{align}\label{um:SPDE:t}
    \lim_{h\to0^+} \sup_{t,s\in I: 0<|t-s| \le h} \frac{|u(t\,,x_0)-u(s\,,x_0)|}{|t-s|^{1/\alpha}\sqrt{\log(1/|t-s|)}} = K_4,
\end{align}
\begin{align}\label{um:SPDE:tx}
    \lim_{h\to0^+} \sup_{z,z'\in I\times J: 0<\phi(z,z') \le h} \frac{|u(z)-u(z')|}{\phi(z\,,z')\sqrt{\log(1/\phi(z\,,z'))}} = K_5,
\end{align}
for some non-random numbers $K_3, K_4, K_5 \in (0\,,\infty)$ that may depend on $I, J$ or $t_0$.
\end{theorem}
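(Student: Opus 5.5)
The plan is to deduce all three statements from Theorem \ref{th:um}, or more precisely from its anisotropic version Theorem \ref{th:um:aniso}, applied to suitable restrictions of $u$. The variance asymptotics are already known: $\E[(u(t\,,x)-u(s\,,y))^2]^{1/2}\asymp\phi((t\,,x)\,,(s\,,y))$ on $I\times J$. Only two things remain: checking the regularity conditions on the scaling functions, and proving the pairwise conditional-variance condition \eqref{um:cv}.

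\textbf{Regularity conditions.} In space, at fixed $t_0$, we take $\varphi(r)=r\sqrt{\log_+(1/r)}$. This is non-decreasing near $0$ and regularly varying of index $1$. Hence
\[
\varphi(h)\sqrt{\log(1/h)}\asymp h\log(1/h),
\]
which explains the normalization in \eqref{um:SPDE:x}. For \eqref{um:phi}, the integral $\int_0^h \varphi(r)\,\d r/(r\sqrt{\log(1/r)})$ is $\asymp\int_0^h \d r = h$, which is at most $h\log(1/h)$. In time, at fixed $x_0$, we take $\varphi(r)=r^{1/\alpha}$ with $1/\alpha\le 1$; this requires $\alpha\ge1$, which holds because $2H\alpha=3$ and $H<1$ force $\alpha>3/2$. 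For the joint statement \eqref{um:SPDE:tx} we use the anisotropic metric $\phi$ in Theorem \ref{th:um:aniso}, which handles exactly this kind of anisotropic $\phi$.

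\textbf{Temporal condition.} For \eqref{um:SPDE:t} and the time part of \eqref{um:SPDE:tx}, we plan to verify the pairwise condition by known arguments. In the regime $2H\alpha>1$, temporal SLND (or at least LND) for $t\mapsto u(t\,,x)$ is available from the harmonizable/spectral representation as in \cite{LX23, HSWX20}, since the time index $1/\alpha<1$ is non-degenerate. Proposition \ref{prop:SLND} then gives \eqref{um:cv}.

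\textbf{Spatial condition (main obstacle).} SLND in $x$ is open, so we must verify directly that, for $|x-y|\le h$, $|x'-y'|\le h$ and $|y-y'|\ge a_0h^\rho$ with some $\rho<1$, the correlation of $u(t_0,x)-u(t_0,y)$ and $u(t_0,x')-u(t_0,y')$ is bounded away from $\pm1$. By Lemma \ref{lem:cv:cov}, it is enough to show that this correlation tends to $0$.

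\textbf{Decorrelation by localization.} We write $u(t_0,x)=\int_0^{t_0}\!\int G_{t_0-r}(x-z)\,W(\d r\,\d z)$, where $G$ is the $\alpha$-stable heat kernel. We then split the increment into a localized part $U_\delta$ and a remainder. The localized part $U_\delta$ integrates over $z$ within distance $\delta$ of $y$, with $h\ll\delta\ll h^\rho$, for example $\delta=h^{\rho'}$ with $\rho<\rho'<1$. The remainder must have variance $o(h^2\log(1/h))$. This is the new localization estimate. We plan to prove it using the gradient bound $|\partial_x G_s(w)|\lesssim s^{-2/\alpha}(1+|w|s^{-1/\alpha})^{-1-\alpha}$ and the fractional-in-time covariance, together with the fact that the $\log$ factor comes from the full range of scales $s\in(h^\alpha,1)$. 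Cutting off space at $|w|>\delta$ removes only scales $s\gtrsim\delta^\alpha$ up to polynomially small tails. The remaining contribution is then of order $h^2\log(1/\delta)$ plus a term of order $h^2$. Since $\log(1/\delta)=\rho'\log(1/h)$, this is not yet $o(h^2\log(1/h))$.

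So a plain cutoff fails, and we must take more care. The cross term $\E[U_\delta(x,y)U_\delta(x',y')]$ vanishes by disjointness of supports, because white-in-space noise makes the spatial supports independent. That leaves the cross terms between the local parts and the remainders. We would bound these by Cauchy--Schwarz with the remainder variances, and also compute them directly using the spatial separation $|y-y'|$. The aim is a covariance bound of the form
\[
\bigl|\E[(u(t_0,x)-u(t_0,y))(u(t_0,x')-u(t_0,y'))]\bigr|\le (1-\rho'+o(1))\,h^2\log(1/h).
\]
This suffices, since correlation at most $c<1$ is all we need. It suggests directly estimating the covariance kernel: the covariance at separation $d=|y-y'|$ involves only scales $s\gtrsim d^\alpha$, hence contributes $\lesssim h^2\log(1/d)\le\rho h^2\log(1/h)$ when $d\ge a_0h^\rho$. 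This direct kernel estimate is what we expect to be the crux.
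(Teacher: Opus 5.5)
\textbf{Verdict: genuine gap.} Your handling of \eqref{um:SPDE:t} and \eqref{um:SPDE:tx} is essentially the paper's, so the problem is confined to the spatial statement \eqref{um:SPDE:x}. For the other two, the paper uses temporal SLND (in this borderline case taken from \cite[Theorem 1.3]{CLX}) together with Proposition \ref{prop:SLND} and Theorem \ref{th:um}. The upper bound for $K_5$ comes from Theorem \ref{th:um:aniso}, which needs no correlation hypothesis, and the lower bound comes from restricting to the time direction.

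\textbf{Where the spatial argument breaks.} The only new step is verifying \eqref{um:cv} in space, and there you discard the argument that works. You correctly estimate that a cutoff at spatial radius $h^{\rho'}$ leaves a remainder of variance about $h^2\log(1/h^{\rho'})+O(h^2)=\rho' h^2\log(1/h)+O(h^2)$. You then conclude that the cutoff ``fails'' because this is not $o(h^2\log(1/h))$. But $o(h^2\log(1/h))$ is neither attainable nor needed.
\begin{itemize}
\item It is not attainable. As your own closing heuristic shows, increments based at distance $d\asymp h^{\rho}$ share the scales between $d$ and $1$. Their correlation is therefore of order $\log(1/d)/\log(1/h)\approx\rho$, which does not tend to $0$ for fixed $\rho$. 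So the goal ``correlation tends to $0$'' is unreachable.
\item It is not needed. Theorem \ref{th:um} accepts \emph{any} fixed exponent $\rho>0$ in the separation condition $|s-s'|\ge a_0h^\rho$, and any $C_0>0$.
\end{itemize}
The missing observation is that the remainder bound has the form $K\rho\,h^2\log(1/h)$ with $K$ \emph{independent of} $\rho$ (Proposition \ref{prop:loc}). The localized parts have disjoint spatial supports, so they are independent because the noise is white in space. Cauchy--Schwarz on the three surviving cross terms then gives correlation at most $c\sqrt{\rho}$, with $c$ independent of $\rho$. Fixing $\rho$ small makes this at most $\sqrt{1-C_0}$. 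This is exactly how the paper verifies \eqref{um:cv} (cf.\ Proposition \ref{prop:corr}).

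\textbf{Your replacement targets.} Neither closes the gap as written.
\begin{itemize}
\item A bound $|\E[\cdots]|\le(1-\rho'+o(1))h^2\log(1/h)$ does not force the correlation below $1$. The variance $\|u(t_0,x)-u(t_0,y)\|_2^2$ is only known up to the constants $C_1,C_2$ in \eqref{u-u:x}, so you would also need its exact asymptotic constant.
\item The direct kernel bound $|\E[\cdots]|\lesssim h^2\log(1/d)$ would suffice, again only because $\rho$ may be chosen small. It is a reasonable Fourier-side alternative. However, it is the step you yourself call the crux, and you leave it undone.
\end{itemize}

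\textbf{How the paper makes the remainder estimate rigorous.} It localizes in space \emph{and} time, with $A=[t-h^{\rho\alpha},t]\times[x-h^{\rho},x+h^{\rho}]$.
\begin{itemize}
\item The contribution of times before $t-h^{\rho\alpha}$ is computed over all of space by Plancherel. It is $\lesssim h^2\log(1/h^{\rho})+h^2$, up to an exponentially small term.
\item The spatial tail inside the short time window is handled by the gradient bound $|\partial_yG(s,y)|\lesssim s|y|^{-2-\alpha}$. This gives $h^2\varepsilon^{-3-2\alpha}\delta^{2H+2}=O(h^2)$, precisely because $2H\alpha=3$.
\end{itemize}
Your purely spatial cutoff over the whole time interval would additionally require controlling the spatial tail at large times. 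Your sketch only asserts that heuristically.
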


In particular, \eqref{um:SPDE:x} is a consequence of Theorem \ref{th:um}. The proof of \eqref{um:SPDE:x} is based on new localization estimates for the spatial increments of the solution (Proposition \ref{prop:loc}), which extend the idea of \cite{FKM15} and yield decorrelation of the spatial increments (Proposition \ref{prop:corr}), hence verifying condition \eqref{um:cv}.
The localization estimates and decorrelation result may be of independent interest.
Besides, we also obtain the exact local modulus of continuity:

\begin{theorem}\label{th:lm:SPDE}
Suppose $2H\alpha =3$, and fix $z_0 = (t_0\,,x_0) \in (0\,,\infty) \times \R$. Then, a.s.,
\begin{align}\label{lm:SPDE:x}
    \lim_{h\to0^+} \sup_{x: 0<|x-x_0|\le h} \frac{|u(t_0\,,x)-u(t_0\,,x_0)|}{|x-x_0|\sqrt{\log(1/|x-x_0|) \log\log\log(1/|x-x_0|)}} = K_6,
\end{align}
\begin{align}\label{lm:SPDE:t}
    \lim_{h\to0^+} \sup_{t: 0<|t-t_0|\le h} \frac{|u(t\,,x_0)-u(t_0\,,x_0)|}{|t-t_0|^{1/\alpha}\sqrt{\log\log(1/|t-t_0|)}} = K_7,
\end{align}
\begin{align}\label{lm:SPDE:tx}
    \lim_{h\to0^+} \sup_{z : 0<\phi(z,z_0) \le h} \frac{|u(z)-u(z_0)|}{\phi(z\,,z_0)\sqrt{\log\log(1/\phi(z\,,z_0))}} = K_8,
\end{align}
for some non-random numbers $K_6,K_7,K_8 \in (0\,,\infty)$ that may depend on $t_0$.
\end{theorem}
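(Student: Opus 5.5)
The plan is to deduce each of the three statements from Theorem \ref{th:lm}, or from its anisotropic version Theorem \ref{th:lm:aniso} in the space–time case. For each we need three things: the variance comparison with a suitable regularly varying $\varphi$, the integral condition \eqref{lm:phi}, and the decorrelation condition \eqref{lm:cv} along a geometric sequence $t_n$.

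\textbf{Temporal and space–time statements.} For \eqref{lm:SPDE:t} take $\varphi(r)=r^{1/\alpha}$. For \eqref{lm:SPDE:tx} take the anisotropic metric $\phi$. Here $1/\alpha = 2H/3 \in (1/3\,,2/3)$, so $\varphi$ is regularly varying of index $<1$. Condition \eqref{lm:phi} is then the elementary estimate
\[
\int_0^1 \frac{r^{1/\alpha-1}\,\d r}{\sqrt{\log(2/r)}}<\infty .
\]
For \eqref{lm:cv} we can use the SLND in time, which is known for the time variable when $2H\alpha\le 3$ \cite{LX23, HSWX20}. Proposition \ref{prop:SLND} turns this into the pairwise conditional variance bound. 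In the space–time case, we choose the sequence $t_n$ to move in the time direction, with $|t_n-t_0|=a^{-n}$. The lower bound in the anisotropic theorem only needs one such direction, and the upper bound follows from the metric entropy bound under $\phi$.

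\textbf{Spatial statement.} For \eqref{lm:SPDE:x} the natural choice is $\varphi(r)=r\sqrt{\log_+(1/r)}$, which is regularly varying of index $\theta=1$. Check \eqref{lm:phi} with $\log$ denoting $\log(1/h)$:
\[
\int_0^1 \frac{hr\sqrt{\log(1/(hr))}}{r\sqrt{\log(2/r)}}\,\d r
\lesssim h\int_0^1 \frac{\sqrt{\log(1/h)}+\sqrt{\log(1/r)}}{\sqrt{\log(2/r)}}\,\d r
\lesssim h\sqrt{\log(1/h)} .
\]
This is bounded by $\varphi(h)\sqrt{\log\log(1/h)}$, as required. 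Theorem \ref{th:lm} would then give normalization $\varphi(r)\sqrt{\log\log(1/r)}$, which is not the $\log\log\log$ rate in the statement. This mismatch signals that the naive application is wrong. The log factor in the variance comes from long-range, low-frequency contributions, so the increments $u(t_0,x_0+a^{-n})-u(t_0,x_0)$ along a geometric sequence are strongly correlated and \eqref{lm:cv} fails.

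I would therefore decompose $u(t_0,\cdot)=u_1+u_2$, with $u_1$ having spectral content $|\xi|\le\lambda$ and $u_2$ the rest. Heuristically, the spatial variance is
\[
\int \frac{(1-\cos h\xi)}{|\xi|^3}\,\d\xi \approx h^2\int_1^{1/h}\frac{\d\xi}{\xi},
\]
so each dyadic frequency block contributes $h^2$ to the variance. The increment is then essentially $h$ times a sum of about $\log(1/h)$ roughly independent gradient pieces, one per dyadic block $k$. Write $D(x)$ for the spatial gradient, $D(x)\approx\sum_{k\le\log(1/h)}G_k(x)$. For $x$ at distance $h$, only blocks with $2^k\lesssim 1/h$ survive. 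So the local sup over $|x-x_0|\le h$ of $|u(x)-u(x_0)|$ behaves like $h$ times a Gaussian random walk $S_m=\sum_{k\le m}G_k(x_0)$ at time $m=\log(1/h)$, plus a lower-order remainder. The LIL for this walk gives
\[
\sqrt{2m\log\log m} = \sqrt{2\log(1/h)\,\log\log\log(1/h)} ,
\]
which explains the stated normalization.

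The proof plan is then:
\begin{enumerate}
\item Use the localization estimates of Proposition \ref{prop:loc} and Proposition \ref{prop:corr} to show that the blocks $G_k(x_0)$ for well-separated $k$ are asymptotically decorrelated.
\item Apply Theorem \ref{th:lm} in the "time" variable $m$. Either work with the process $m\mapsto S_m$ directly, or reparametrize with super-geometric scales $t_n$, $|t_n-x_0|=\exp(-a^n)$. With these scales \eqref{lm:cv} holds, since the variance grows by a fixed factor between scales.
\item Show that the remainder $u(x)-u(x_0)-(x-x_0)S_{\log(1/|x-x_0|)}$ is $o$ of the normalization. For this use the chaining bound \eqref{lm:phi}-type estimates at scale $h$, which give order $h\sqrt{\log(1/h)}$ without the triple-log factor.
\end{enumerate}

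\textbf{Main obstacle.} The hardest step is step 1 together with the remainder control in step 3: establishing the decorrelation of the dyadic gradient blocks, and the resulting conditional variance bound, uniformly at super-geometric scales.
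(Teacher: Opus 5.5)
\textbf{Step 3 is where the proposal fails: the remainder bound you propose is too weak.} Your overall architecture matches the paper's. For \eqref{lm:SPDE:t} you use temporal SLND with Theorem \ref{th:lm}, and for \eqref{lm:SPDE:tx} you use Theorem \ref{th:lm:aniso}. For \eqref{lm:SPDE:x} you use a frequency decomposition, doubly exponential scales with Slepian's lemma for the lower bound, and a linearization by a truncated derivative plus its LIL for the upper bound. The problem is only in how the linearization error is controlled.

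You control the remainder by chaining at order $h\sqrt{\log(1/h)}$. That bound already holds for the full increment: your own check of \eqref{lm:phi}, fed into Dudley, gives $\E\sup_{|x-x_0|\le h}|u(t_0,x)-u(t_0,x_0)|\lesssim h\sqrt{\log(1/h)}$. So if this bound sufficed, the same argument would make the increment itself negligible and give $K_6=0$.

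Concretely, an a.s.\ statement needs Borell plus Borel--Cantelli over the blocks $h\in[e^{-n-1},e^{-n}]$. With standard deviation of order $e^{-n}\sqrt n$, the probability of exceeding $\varepsilon e^{-n}\sqrt{n\log\log n}$ is only bounded by $(\log n)^{-c\varepsilon^2}$, which is not summable.

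What is actually needed, and what the paper proves in \eqref{E:bd}, is that the linearization error $\mathscr{E}_n(h)$ is $O(h)$ in $L^2$ for $|h|\le e^{-n}$. That is a full factor $\sqrt{\log(1/h)}$ below the increment. Here $\mathscr{D}(n)$ is the derivative truncated at frequency $e^n$. The Taylor error below the cutoff contributes $\lesssim h^4e^{2n}$ to the variance, using $2\alpha H=3$, and the tail above the cutoff contributes $\lesssim e^{-2n}$. Then Dudley plus Borell give the summable tails $\exp(-c\varepsilon^2 n\log\log n)$. Your lower-bound step likewise needs the remainder to be $o(h\sqrt{\log(1/h)})$ in $L^2$ in order to transfer correlations of $S_{a^n}$ to the actual increments. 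Your chaining bound does not provide this.

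\textbf{Step 1 uses the wrong tools and misidentifies the main obstacle.} Propositions \ref{prop:loc} and \ref{prop:corr} decorrelate increments whose base points are separated, $|x-x'|\ge 2r^\rho$. They rely on the noise being white in space over regions with disjoint spatial projections. For increments all based at $x_0$ this hypothesis fails and the localization regions are nested. Because the noise is colored in time, disjoint space-time regions with overlapping spatial projections do not give independent integrals.

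Conversely, blocks defined spectrally through the harmonizable representation \eqref{v} are exactly independent, so their decorrelation needs no proof. The real quantitative input for the lower bound is the out-of-band estimate of Lemma \ref{lem:v}. The part of $\Delta^h u$ at frequencies below $h^{-\rho}$ has norm $\lesssim h\sqrt{\rho\log(1/h)}$, and the part above $h^{-1}$ has norm $\lesssim h$. This yields \eqref{cov:Delta:u} once $\log(1/h_{n+1})>\rho^{-1}\log(1/h_n)$. So your scales $\exp(-a^n)$ with $a>1/\rho$ would serve as well as the paper's $\exp(-e^{n^\gamma})$.

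\textbf{Steps 2 and the space-time case.} Neither Theorem \ref{th:lm} (geometric scales, $\log\log$ normalization) nor Theorem \ref{th:lm:aniso} covers $\ell=\log\log\log$. You must rerun the Slepian argument directly with $\ell(1/h_n)\asymp\log n$, as the paper does. For the lower bound in \eqref{lm:SPDE:tx}, it is simpler to restrict to $z=(t,x_0)$ and invoke \eqref{lm:SPDE:t}, as the paper does.
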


Let us point out that the local modulus of continuity in \eqref{lm:SPDE:x} cannot be obtained from Theorem \ref{th:lm} or the general result Theorem \ref{th:lm:aniso} below.
This is because the spatial increments are highly correlated. The proof of \eqref{lm:SPDE:x} uses the fact that the much faster rate $\exp(-e^{n^\gamma})$ for the convergence $|x_n - x_0|\to 0$ is required in order for the increments to have sufficient decorrelation in the sense of \eqref{lm:cv}.
The proof also relies on error estimates for the linearization of spatial increments involving an approximate derivative.
To the best of our knowledge, \eqref{lm:SPDE:x} is a new result.

The paper is organized as follows. 
In Section \ref{s:general:results}, we present our general results Theorems \ref{th:um:aniso} and \ref{th:lm:aniso}.
In Section \ref{s:SLND}, we compare our conditions with the SLND condition.
Section \ref{s:pre} contains some preliminary materials concerning exact moduli of continuity, zero-one laws, and Slepian's lemma.  
In Section \ref{s:pf:general}, we prove Theorems \ref{th:um:aniso} and \ref{th:lm:aniso}.
In Section \ref{s:pf:main}, we prove Theorems \ref{th:um} and \ref{th:lm}.
In Section \ref{s:SPDE}, we consider the example about linear SPDEs, establish localization results, and prove Theorems \ref{th:um:SPDE} and \ref{th:lm:SPDE}.
Finally, in Section \ref{s:volterra}, we discuss another example about a class of Gaussian Volterra processes whose moduli of continuity may or may not be H\"older continuous.

The following list of notation will be used  throughout the paper: $\N = \{1 ,2 ,\dots\}$; ${\bf 1}_A$ denotes indicator function of a set $A$; $a\wedge b = \min\{a\,,b\}$; $a\vee b = \max\{a\,,b\}$; $\log_+(a) = \log(a \vee e)$; ``$f(x) \lesssim g(x)$'' means that there exists $C \in (0\,,\infty)$ such that $f(x) \le C g(x)$ for all $x$; ``$f(x) \asymp g(x)$'' means that $f(x) \lesssim g(x)$ and $g(x) \lesssim f(x)$; ``$f(x) \sim g(x)$ as $x\to a$'' means that $f(x)/g(x) \to 1$ as $x\to a$; ``$f(x) \propto g(x)$'' means that there exists $C\in (0\,,\infty)$ such that $f(x) = Cg(x)$ for all $x$;
for any $t \in \R^N$, we write $t = (t^1,\dots, t^N)$ and $|t|$ as its Euclidean norm; and $\|Y\|_2 = (\E|Y|^2)^{1/2}$ for any random variable $Y$.

\section{General results}
\label{s:general:results}

In this section, we present general uniform and local moduli of continuity results for anisotropic Gaussian random fields.
Throughout, we write
\begin{align}\label{sigma}
    \sigma(t\,,s) = \|X(t)-X(s)\|_2 \quad \forall t, s \in I.
\end{align}

\begin{assumption}\label{a:sigma:aniso}
    There exist $N$ functions $\phi_1,\dots, \phi_N: [0\,,\infty) \to [0\,,\infty)$ with $\phi_i(0)=\phi_i(0+)=0$ that are non-decreasing, continuous, and regularly varying at 0 of the form $\phi_i(r) = r^{\theta_i} L_i(r)$, where $\theta_i \in [0\,,1]$, $L_i: (0\,,\infty) \to (0\,,\infty)$ is slowly varying at 0, and there exist two constants $C_1, C_2 > 0$ such that
    \begin{align}\label{sigma:phi}
        C_1 \phi(t-s) \le \sigma(t\,,s) \le C_2 \phi(t-s) \quad \forall t,s \in I,
    \end{align}
    where $\phi(t-s) := \max_{1\le i \le N} \phi_i(|t^i-s^i|)$.
\end{assumption}

Throughout, $\psi: [0\,,\infty) \to [0\,,\infty)$ denotes the function defined by
\begin{align}\label{psi}
    \psi(r) = \prod_{i=1}^N \phi_i^{-1}(r) \quad \forall r \ge 0,
\end{align}
where $\phi_i^{-1}$ denotes the generalized inverse $\phi_i^{-1}(r) = \inf\{ h \ge 0: \phi(h) \ge r \}$.

\begin{assumption}\label{a:psi:aniso}
    The function $\psi$ satisfies
        \begin{gather}
            \lim_{h \to 0^+}\psi^{-1}(h) \sqrt{\log(1/h)} = 0 \label{psi:1} \\
            \text{and} \quad C_3:= \limsup_{h \to 0^+} \frac{1}{\psi^{-1}(h)\sqrt{\log(1/h)}} \int_0^h \frac{\psi^{-1}(r)\, \d r}{r\sqrt{\log(1/r)}} < \infty.\label{psi:2}
        \end{gather}
\end{assumption}

\begin{assumption}\label{a:cv:aniso}
    There exist constants $C_0 \in (0\,,1]$, $A_0>0$, $\rho >0$ and $h_0 \in (0\,,1]$ such that for every $h \in (0\,,h_0]$,
    \begin{align*}
        \E[(X(t)-X(s))(X(t')-X(s'))] \le \sqrt{1-C_0} \|X(t)-X(s)\|_2 \|X(t')-X(s')\|_2
    \end{align*}
    for all $t,s,t',s' \in I$ with $\phi(t-s)\le h$, $\phi(t'-s') \le h$ and $\phi(s-s') \ge \psi^{-1}(A_0 (\psi(h))^\rho)$, where $\psi$ is defined by \eqref{psi}.
\end{assumption}

\begin{theorem}\label{th:um:aniso}
    Under Assumptions \ref{a:sigma:aniso} and \ref{a:psi:aniso},
    \begin{align}\label{um:aniso}
        \lim_{h \to 0^+} \sup_{t, s \in I: 0<\phi(t-s) \le h} \frac{|X(t)-X(s)|}{\phi(t-s) \sqrt{\log[1/\psi(\phi(t-s))]}} = C \quad \text{a.s.}
    \end{align}
    for some constant $C \le K C_2(1+C_3/2)<\infty$, where $K$ is a universal constant.
    If in addition Assumption  \ref{a:cv:aniso} holds, then
    we also have $C \ge C_1[2\rho (1-\sqrt{1-C_0})]^{1/2}>0$.
\end{theorem}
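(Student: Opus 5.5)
Write $Z(h)=\sup_{t,s\in I:\,0<\phi(t-s)\le h}|X(t)-X(s)|/(\phi(t-s)\sqrt{\log[1/\psi(\phi(t-s))]})$. The proof has three parts: existence of a deterministic limit via a zero-one law, an upper bound via chaining, and a lower bound via Slepian's lemma.

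\textbf{Step 1: the limit is a.s.\ constant.} $Z(h)$ is monotone in $h$, so $\lim_{h\to0^+}Z(h)$ exists in $[0,\infty]$. It is measurable with respect to the germ (tail) $\sigma$-field of $X$ at small scales. By the zero-one law for such moduli of continuity of Gaussian fields (Section \ref{s:pre}), this limit equals a deterministic constant $C\in[0,\infty]$ a.s. It therefore suffices to prove the upper and lower bounds on $\limsup$ and $\liminf$ separately.

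\textbf{Step 2: upper bound $C\le KC_2(1+C_3/2)$.} I use a chaining argument adapted to the anisotropic metric $\phi$.
\begin{itemize}
\item By \eqref{sigma:phi}, the canonical metric $\sigma$ is dominated by $C_2\phi$.
\item A $\phi$-ball of radius $r$ is essentially a rectangle with side lengths $\phi_i^{-1}(r)$. Its volume is therefore $\psi(r)$, and $I$ has covering number $N(r)\asymp 1/\psi(r)$.
\item Apply a Dudley/Fernique-type modulus estimate (e.g.\ a Garsia--Rodemich--Rumsey or chaining bound with an explicit universal constant). It gives, a.s.\ for small $\delta=\phi(t-s)$,
\[
|X(t)-X(s)|\le KC_2\Big(\delta\sqrt{\log(1/\psi(\delta))}+\int_0^{\delta}\sqrt{\log N(r)}\,\d r\Big).
\]
\item Substitute $u=\psi(r)$, so $r=\psi^{-1}(u)$. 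The entropy integral becomes comparable to $\int_0^{\psi(\delta)}\psi^{-1}(u)\,\d u/(u\sqrt{\log(1/u)})$, up to the factor $1/2$ coming from $\d\sqrt{\log}$. By \eqref{psi:2}, this is at most $(C_3+o(1))\,\delta\sqrt{\log(1/\psi(\delta))}$.
\item Condition \eqref{psi:1} ensures the normalization tends to zero, so the bound holds uniformly on small scales.
\end{itemize}
A Borel--Cantelli argument over dyadic levels of $\delta$ then makes the constant uniform. Regular variation of the $\phi_i$ handles comparisons between neighbouring scales and between $\phi$ and $\psi^{-1}$.

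\textbf{Step 3: lower bound $C\ge C_1[2\rho(1-\sqrt{1-C_0})]^{1/2}$.} Fix small $h$ and choose a grid of points $s_k\in I$ pairwise separated in $\phi$ by at least $\psi^{-1}(A_0\psi(h)^\rho)$. We can take about $n\asymp \psi(h)^{-\rho}$ of them. Pair each $s_k$ with $t_k$ satisfying $\phi(t_k-s_k)=h$, and set $Y_k=(X(t_k)-X(s_k))/\sigma(t_k,s_k)$. By Assumption \ref{a:cv:aniso}, $\E[Y_jY_k]\le\sqrt{1-C_0}=:\beta$ for $j\ne k$.

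Slepian's lemma compares $\max_k Y_k$ with an exchangeable Gaussian vector with correlation $\beta$, namely $\sqrt\beta\,\xi+\sqrt{1-\beta}\,\eta_k$ with $\xi,\eta_k$ i.i.d.\ standard normal. This gives
\[
\max_k Y_k\ge(1-\varepsilon)\sqrt{(1-\beta)\,2\log n}
\]
with probability $1-p_h$, where $p_h\to0$ quickly enough to be summable along $h_m=e^{-m}$ or a similar sequence.

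Since $\log n\approx\rho\log(1/\psi(h))$ and $\sigma(t_k,s_k)\ge C_1h$, Borel--Cantelli gives $\limsup Z\ge C_1\sqrt{2\rho(1-\beta)}$. Because $C$ is deterministic by Step 1, this proves the lower bound.

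\textbf{Main obstacle.} The delicate point is the probability estimate for the Slepian comparison. The common factor $\xi$ can be negative, so I would bound $P(\sqrt\beta\,\xi<-\varepsilon\sqrt{\log n})$ separately; its Gaussian tail decays polynomially in $n$, which is enough. A second technical point is making sure the grid fits inside $I$ with the claimed cardinality. This uses regular variation of the $\phi_i$, so that the separation $\psi^{-1}(A_0\psi(h)^\rho)$ produces about $\psi(h)^{-\rho}$ points up to constants, which do not affect $\log n$ asymptotically.
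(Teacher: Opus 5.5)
Your proposal is correct and follows essentially the same route as the paper: the zero-one law of Lemma \ref{lem:01}(i), a uniform a.s.\ entropy bound (the paper cites \cite[Theorem 1.3.5]{AT}) with the substitution $r=\psi(\varepsilon/C_2)$ and integration by parts, and Slepian's comparison with $(1-C_0)^{1/4}\xi_0+(1-\sqrt{1-C_0})^{1/2}\xi_i$ over $\asymp\psi(h)^{-\rho}$ separated increments. The integration by parts already yields the boundary term $h\sqrt{\log(1/\psi(h))}$ plus half the $\psi^{-1}$-integral, so your display counts the boundary term twice; this only changes the universal constant $K$. One simplification in the paper is that the lower bound needs no Borel--Cantelli: since your $Z(h)$ is monotone, the events $\{Z(h_n)\ge a\}$ decrease in $n$, so $\P\{Z(h_n)\ge a\}\to1$ suffices and summability of $p_h$ is never needed.
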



\begin{assumption}\label{a:psi:lil}
    There exist $a_0>0$ and an increasing function $\ell:[a_0\,,\infty) \to (0\,,\infty)$ with $\lim_{x\to\infty}\ell(x)=\infty$ such that
    \begin{gather}\label{psi:lil:1}
        \lim_{h\to0^+}\psi^{-1}(h)\sqrt{\ell(1/h)} =0\\
        \text{and} \quad 
        C_4:=\limsup_{h\to0^+} \frac{1}{\psi^{-1}(h) \sqrt{\ell(1/h)}} \int_0^{1} \frac{\psi^{-1}(h r) \, \d r}{r \sqrt{\log(2/r)}} < \infty.\label{psi:lil:2}
    \end{gather}
\end{assumption}

\begin{assumption}\label{a:cv:lil}
    There exists a sequence $(t_n)_{n=1}^\infty$ in $I$ with 
    $\psi(\phi(t_n-t_0)) \asymp h_n$,  $h_n \downarrow 0$,
    and there exist $C_0\in (0\,,1]$, $n_0 \in \N$ such that
    \begin{align*}
        \E[(X(t_n) - X(t_0))(X(t_m)-X(t_0))] \le \sqrt{1-C_0} \|X(t_n) - X(t_0)\|_2 \|X(t_m)-X(t_0)\|_2
    \end{align*}
    uniformly for all $m > n \ge n_0$.
\end{assumption}

\begin{theorem}\label{th:lm:aniso}
    Fix $t_0 \in I$.
    Suppose either
    \begin{enumerate}[(1)]
        \item $\ell(x) = \log\log(x)$ and $h_n = e^{-\rho n^\gamma}$, where $\rho>0$ and $\gamma>0$; or
        \item $\ell(x) = \log(x)$ and $h_n = n^{-\gamma}$, where $\gamma>0$.
    \end{enumerate}
    Then, under Assumptions \ref{a:sigma:aniso} and \ref{a:psi:lil}, we have
    \begin{align}\label{lm:aniso}
        \lim_{h\to0^+} \sup_{t\in I: 0<\phi(t-t_0) \le h} \frac{|X(t)-X(t_0)|}{\phi(t-t_0)\sqrt{\ell(1/ \psi(\phi(t-t_0)))}} = C \quad \text{a.s.}
    \end{align}
    for some non-random number $C=C(t_0)$ such that
    \begin{align}\label{lm:C:H<1}
        C \le \begin{cases}
        (\sqrt{2}+KC_4/2)C_2 & \text{in Case (1),}\\
        (\sqrt{2/\nu}+KC_4/2)C_2
        & \text{in Case (2),}
    \end{cases}
    \end{align}
    where $K$ is a universal constant and $\nu = \sum_{i=1}^N \theta_i^{-1}$, with $1/0=\infty$ and $1/\infty = 0$.
    If in addition Assumption \ref{a:cv:lil} holds,
    then $C \ge C_1 [2\gamma^{-1}(1-\sqrt{1-C_0})]^{1/2} > 0$ in both Cases (1) and (2).
\end{theorem}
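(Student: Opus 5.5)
The proof has three parts. A zero-one law shows that the limit in \eqref{lm:aniso} is an almost-sure constant $C(t_0)\in[0,\infty]$. A chaining argument along a geometric sequence of scales gives the upper bound. Slepian's lemma along the decorrelated sequence $(t_n)$ of Assumption \ref{a:cv:lil} gives the lower bound.

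\textbf{Zero-one law.} The $\limsup$ as $h\to0^+$ of the supremum over $0<\phi(t-t_0)\le h$ is non-increasing in $h$, so the limit exists. It is measurable with respect to the germ $\sigma$-field of the increments $X(t)-X(t_0)$ near $t_0$. By the zero-one law for Gaussian fields recalled in Section \ref{s:pre}, it is a.s.\ equal to a constant $C\in[0,\infty]$.

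\textbf{Upper bound.} Pick scales $r_k\downarrow0$ with $\psi(r_k)=h_k$ geometric: take $h_k=e^{-k^{\beta}}$-type or doubly-exponential scales in Case (1), and $h_k=q^k$ in Case (2). Then $\ell(1/h_k)$ is of order $\log k$, so the number of scales is what produces the iterated logarithm.

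On the shell $\{t: r_{k+1}<\phi(t-t_0)\le r_k\}$, cover by a $\psi^{-1}(\varepsilon h)$-net. The anisotropic volume of this shell is $\asymp\psi(r_k)$, so the net has polynomially many points in $1/\varepsilon$. For the net points, apply a union bound with Gaussian tails at level $(\sqrt2+\delta)C_2\phi\sqrt{\ell}$ and use Borel--Cantelli over $k$. In Case (2) a refinement is needed: the number of net points at level $k$ is compared with $\psi$, and the index $\nu$ enters through $\psi(r)\approx r^{\nu}$. This gives the factor $\sqrt{2/\nu}$.

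The remaining fluctuation, between a point of the shell and its nearest net point, is controlled by Dudley/Fernique chaining. Its size is governed by the entropy integral $\int_0^1\psi^{-1}(hr)\,\d r/(r\sqrt{\log(2/r)})$, and Assumption \ref{a:psi:lil} bounds it by $(KC_4/2)C_2\,\phi\sqrt{\ell}$. A Borell--TIS concentration bound makes this hold a.s.\ for all large $k$. Finally, regular variation of $\phi_i$ lets us pass from the scale $r_k$ to the actual value $\phi(t-t_0)$ in the normalization, at the cost of factors tending to $1$.

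\textbf{Lower bound.} Restrict the supremum to the points $t_n$, $n\ge n_0$, and set $Y_n=(X(t_n)-X(t_0))/\|X(t_n)-X(t_0)\|_2$. By Assumption \ref{a:cv:lil}, $\E[Y_nY_m]\le\sqrt{1-C_0}$ for all $m>n$. Consider the block $n\in[M,2M]$, or a suitable window in Case (1). Compare it via Slepian's lemma with an exchangeable family $\sqrt{1-c}\,Z_n+\sqrt c\,\xi$ having correlation exactly $\sqrt{1-C_0}$, where $c=1-\sqrt{1-C_0}$ and the $Z_n$ are i.i.d.\ standard normal. The maximum of this comparison family over $\asymp M$ indices is at least about $\sqrt{2c\log M}$ with probability tending to $1$.

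Now convert $\log M$ into $\ell(1/\psi(\phi(t_n-t_0)))$. Since $\psi(\phi(t_n-t_0))\asymp h_n$, in Case (1) we have $\log\log(1/h_n)=\gamma\log n+O(1)$, and in Case (2) we have $\log(1/h_n)=\gamma\log n$. Hence $\log M\approx\gamma^{-1}\ell$, giving the constant $C_1[2\gamma^{-1}(1-\sqrt{1-C_0})]^{1/2}$, with $C_1$ coming from \eqref{sigma:phi}. This only yields the bound with probability bounded below, not almost surely. That suffices, because $C$ is already known to be deterministic.

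\textbf{Main obstacle.} I expect the hardest step to be the sharp counting in the upper bound in Case (2), which is what yields $\sqrt{2/\nu}$ rather than $\sqrt2$. Handling this needs careful use of the regular variation of the $\phi_i$ to estimate the number of net points in each shell. The Slepian step is routine once the block sizes are matched to $h_n$.
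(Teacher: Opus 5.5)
Your zero-one step and your lower bound are the paper's argument, with two small repairs needed.

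First, Lemma~\ref{lem:01}(ii) rests on the tail $\sigma$-field of the Karhunen--Lo\`eve coefficients, not on a germ $\sigma$-field, which need not be trivial for a Gaussian field. You must also check its hypothesis \eqref{01:lm:sig/w}. This is immediate from $\sigma(t,t_0)\le C_2\phi(t-t_0)$ and $\ell\to\infty$.

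Second, in the Slepian step your coefficients are swapped. The family with pairwise correlation $\sqrt{1-C_0}$ is $\sqrt{c}\,Z_n+\sqrt{1-c}\,\xi$ with $c=1-\sqrt{1-C_0}$. It is this family whose maximum over $M$ indices is about $\sqrt{2c\log M}$. Using a finite block $[M,2M]$ together with the determinism of $C$ is a legitimate substitute for the paper's union over $[m,M]$ with a divergent series.

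The upper bound is where you take a different route, and the detour costs something. The paper uses no net. Dudley's bound on the whole ball $S_n=\{t:\phi(t-t_0)\le q_n\}$ gives $\E\sup_{S_n}|X(t)-X(t_0)|\le (KC_2C_4/2+o(1))\,q_n\sqrt{\ell(1/\psi(q_n))}$. Borell's inequality with $\sup_{S_n}\sigma^2(t,t_0)\le C_2^2q_n^2$ then supplies the $\sqrt2\,C_2$ directly from the concentration of the full supremum. Borel--Cantelli is run over the radii $q_n$, which are $q^{-n}$ with $q\downarrow1$ in Case~(1).

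Your split into a finite net plus a chained residual reproduces this when all $\theta_i>0$. Then the residual has variance at most about $C_2^2\varepsilon^{2/\nu}r_k^2$, so its Borell deviation is negligible. But the statement allows $\theta_i=0$. In that case $\psi^{-1}$ is slowly varying, so $\psi^{-1}(\varepsilon h_k)\sim r_k$ for each fixed $\varepsilon$. In Case~(1) the residual then needs its own deviation of size about $\sqrt2\,C_2 r_k\sqrt{\ell(1/h_k)}$, and the triangle inequality leaves you with roughly $(2\sqrt2+KC_4/2)C_2$.

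This regime is compatible with Assumption~\ref{a:psi:lil}; an example is $N=1$ and $\psi^{-1}(h)=\exp(-\log(1/h)/\log\log(1/h))$. There, shrinking $\varepsilon$ with $k$ makes the net too large for a union bound at level $\sqrt2$. The cure is to drop the net altogether (take it to be $\{t_0\}$), which is exactly the paper's proof.

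Finally, your ``main obstacle'' is misplaced. In Case~(2), $\exp(-a^2\ell(1/h_k)/2)=h_k^{a^2/2}$ is summable over geometric scales for every $a>0$. So no delicate counting of net points is needed, and the stated bound follows a fortiori. The paper's $\sqrt{2/\nu}$ comes only from its choice of linearly spaced radii $q_n=1/n$. For those radii the Borel--Cantelli series $\sum_n\psi(1/n)^{a^2/(2C_2^2)}$ is essentially $\sum_n n^{-\nu a^2/(2C_2^2)}$, which forces $a>C_2\sqrt{2/\nu}$.

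Two related slips: $\ell(1/h_k)\asymp\log k$ holds only in Case~(1), not Case~(2). And doubly exponential shells would make $r_k/r_{k+1}\to\infty$ and lose the constant. You want shells whose ratio can be taken arbitrarily close to $1$.
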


\begin{remark}\label{rmk:theta>0}
    If $\theta_i > 0$ for all $1 \le i \le N$, then \eqref{psi:1} and \eqref{psi:lil:1} automatically hold in both Cases (1) and (2).
\end{remark}

\section{Comparison with SLND}
\label{s:SLND}

We show that conditions \eqref{um:cv} and \eqref{lm:cv} both hold under the LND condition in \eqref{LND}. 
In particular, they both hold under the framework of SLND \cite{LX23} (or under one-sided SLND in Lemma \ref{lem:volterra}(ii) below).

\begin{proposition}\label{prop:SLND}
    Suppose $\Var(X(t)-X(s)) \asymp \varphi^2(|t-s|)$ for all $t,s \in I$, where $\varphi$ is non-decreasing, continuous and regularly varying at 0 of index $\theta \in [0\,,1]$.
    If X satisfies LND \eqref{LND} for $n=3$, then \eqref{um:cv} holds with $\rho=1$ and $a_0=4$.
    If X satisfies LND \eqref{LND} for $n=2$, then \eqref{lm:cv} holds for any $a>1$.
\end{proposition}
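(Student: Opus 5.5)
The plan is to reduce both conditional variances to conditional variances of a single value $X(t)$ given finitely many values, and then apply LND \eqref{LND}. The basic tool is monotonicity of conditional variance under enlarging the conditioning $\sigma$-field. For instance,
\[
\Var(X(t)-X(s)\mid X(t')-X(s')) \ge \Var(X(t)-X(s)\mid X(s),X(t'),X(s')) = \Var(X(t)\mid X(s),X(t'),X(s')).
\]
The equality holds because, given $X(s)$, the term $X(s)$ is known. LND with $n=3$ then gives a lower bound
\[
C_3\min\{\Var(X(t)-X(s)),\Var(X(t)-X(t')),\Var(X(t)-X(s'))\},
\]
and the remaining task is to show this minimum is $\gtrsim \Var(X(t)-X(s))\asymp\varphi^2(|t-s|)$.

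For \eqref{um:cv}, take $h\le h_0$ small, $|t-s|\le h$, $|t'-s'|\le h$ and $|s-s'|\ge 4h$. The triangle inequality gives $|t-s'|\ge |s-s'|-|t-s|\ge 3h$ and $|t-t'|\ge |s-s'|-|t-s|-|t'-s'|\ge 2h$. Both distances therefore exceed $2|t-s|$. Since $\varphi$ is non-decreasing, $\varphi(|t-t'|)\ge\varphi(2h)\ge\varphi(h)\ge\varphi(|t-s|)$, and likewise for $|t-s'|$. Combined with $\Var(X(a)-X(b))\asymp\varphi^2(|a-b|)$, the minimum is $\gtrsim \varphi^2(|t-s|)\asymp\Var(X(t)-X(s))$. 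Here $\rho=1$ and $a_0=4$ enter exactly, with room to spare. Regular variation is not even needed, only monotonicity. One degenerate case must be handled: if $t=s$, both sides vanish and there is nothing to prove.

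For \eqref{lm:cv}, take $|t_n-t_0|=a^{-n}$ and $m>n$. Conditioning additionally on $X(t_0)$ gives
\[
\Var(X(t_n)-X(t_0)\mid X(t_m)-X(t_0))\ge \Var(X(t_n)\mid X(t_0),X(t_m)),
\]
and LND with $n=2$ bounds this below by $C_2\min\{\varphi^2(a^{-n}),\varphi^2(|t_n-t_m|)\}$ up to constants. The step needing care is a lower bound on $|t_n-t_m|$. The reverse triangle inequality gives $|t_n-t_m|\ge a^{-n}-a^{-m}\ge a^{-n}(1-a^{-1})$.

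Monotonicity alone does not give $\varphi(c\,a^{-n})\gtrsim\varphi(a^{-n})$ for a fixed $c=1-a^{-1}\in(0,1)$. This is where regular variation is used: $\varphi(cr)/\varphi(r)\to c^\theta>0$ as $r\to0$, and for $\theta=0$ slow variation gives the limit $1$. Hence $\varphi(cr)\ge \tfrac12 c^\theta\varphi(r)$ for $r$ small, i.e. for $n\ge n_0$ large. Uniformity in $m$ holds because the bound depends only on $n$. This comparison is the only mildly delicate point.

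Finally, the constants can be adjusted so that the inequality holds for all $n\ge n_0$, with the requirement $h\le h_0$ handled similarly in the uniform case. This completes the plan.
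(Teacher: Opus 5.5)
Your proposal is correct and follows the paper's proof step for step. Both enlarge the conditioning to reduce to $\Var(X(t)\mid X(s),X(t'),X(s'))$ (resp.\ $\Var(X(t_n)\mid X(t_m),X(t_0))$), apply LND, use the triangle inequality and monotonicity of $\varphi$ for \eqref{um:cv}, and use regular variation to compare $\varphi((1-a^{-1})a^{-n})$ with $\varphi(a^{-n})$ for \eqref{lm:cv}. The one small difference is that the paper places the $t_n$ on a line so that $|t_n-t_m|=a^{-n}-a^{-m}$ holds exactly, whereas your reverse triangle inequality shows this is unnecessary: any sequence in $I$ with $|t_n-t_0|=a^{-n}$ works, and such a sequence exists because $I$ is a nondegenerate interval.
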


\begin{proof}
    First, if $|t-s|\le h$, $|t'-s'|\le h$ and $|s-s'| \ge 4 h$, then by triangle inequality, 
    \begin{align*}
        |t-t'| \ge 2h \quad \text{and} \quad 
        |t-s'| \ge 3h.
    \end{align*}
    Hence, a property of conditional variance, LND, and monotonicity of $\varphi$ imply that
    \begin{align*}
        &\Var(X(t)-X(s)|X(t')-X(s'))
        \ge \Var(X(t)|X(s),X(t'),X(s'))\\
        &\gtrsim \Var(X(t)-X(s))\wedge \Var(X(t)-X(t'))\wedge \Var(X(t)-X(s'))\\
        &\gtrsim \varphi^2(|t-s|)\wedge \varphi^2(|t-t'|)\wedge \varphi^2(|t-s'|)
        = \varphi^2(|t-s|) \gtrsim \Var(X(t)-X(s)).
    \end{align*}
    This verifies \eqref{um:cv} with $\rho=1$.
    Next, we verify \eqref{lm:cv}. For any fixed $a>1$, we may choose $(t_n)_{n=1}^\infty$ in $I$ converging to $t_0$ in a linear direction such that $|t_n-t_0|=a^{-n}$ and $|t_n-t_m| = a^{-n}-a^{-m}$ for all $m>n\ge 1$.
    Then for $m>n \ge 1$,
    \[
        |t_n-t_m| \ge |t_n-t_{n+1}| = a^{-n}-a^{-n-1} = (1-a^{-1}) a^{-n}.
    \]
    Since $\varphi$ is non-decreasing and regularly varying (see \eqref{rv} below), the preceding together with LND yields
    \begin{align*}
        &\Var(X(t_n)-X(t_0)|X(t_m)-X(t_0)) \ge \Var(X(t_n)|X(t_m),X(t_0))\\
        & \gtrsim \Var(X(t_n)-X(t_m)) \wedge \Var(X(t_n)-X(t_0)) \\
        &\gtrsim \varphi^2(|t_n-t_m|) \wedge \varphi^2(|t_n-t_0|) = \varphi^2(|t_n-t_m|) \ge \varphi^2((1-a^{-1})a^{-n})\\
        & \gtrsim \varphi^2(a^{-n}) = \varphi^2(|t_n-t_0|) \gtrsim \Var(X(t_n)-X(t_0))
    \end{align*}
    uniformly for all sufficiently large $m>n \gg 1$. This verifies \eqref{lm:cv}.
\end{proof}

\section{Preliminaries}
\label{s:pre}

Let us recall the definition of regularly varying functions.
Let $\theta \ge 0$ and $r_0 > 0$.
A function $\varphi: [0\,,r_0] \to [0\,,\infty)$ is said to be \emph{regularly varying} at 0 of index $\theta$ if
\begin{align}\label{rv}
    \lim_{r\to0^+} \frac{\varphi(\lambda r)}{\varphi(r)} = \lambda^{\theta} \quad \forall \lambda>0;
\end{align}
it is said to be \emph{slowly varying} if it is regularly varying of index 0.
In particular, it follows from definition that if $\varphi$ is regularly varying of index $\theta$, then for any $\lambda>0$ and $\delta >0$, there exists $r_1\in (0\,,r_0]$ such that for all $r \in (0\,,r_1]$,
\begin{align}\label{rv:bd}
    \begin{cases}
        \lambda^{\theta+\delta} \varphi(r) \le \varphi(\lambda r) \le \lambda^{\theta-\delta} \varphi(r) & \text{if $0<\lambda<1$,}\\
        \lambda^{\theta-\delta} \varphi(r) \le \varphi(\lambda r) \le \lambda^{\theta+\delta} \varphi(r) & \text{if $\lambda > 1$.}
    \end{cases}
\end{align}
Furthermore, if $\varphi$ is non-decreasing, so is $\varphi^{-1}$, and hence for all $r \in (0\,,r_1]$,
\begin{align}\label{rv:bd:inv}
    \begin{cases}
        \varphi^{-1}(\lambda^{\theta+\delta} \varphi(r)) \le \lambda r \le \varphi^{-1}(\lambda^{\theta-\delta} \varphi(r)) & \text{if $0<\lambda<1$,}\\
        \varphi^{-1}(\lambda^{\theta-\delta} \varphi(r)) \le \lambda r \le \varphi^{-1}(\lambda^{\theta+\delta} \varphi(r)) & \text{if $\lambda > 1$.}
    \end{cases}
\end{align}
We refer to \cite{BGT, Seneta} for further properties of regularly varying functions.

Next, we recall some basic facts about exact moduli of continuity \cite[\S7]{MR}.
Let $w, \tau: I \times I \to [0\,,\infty)$ be two continuous functions that vanish on the diagonal and are strictly positive near but off the diagonal.
We say that $w$ is an \emph{exact uniform modulus of continuity} for $X$ on $(I\,,\tau)$ if there is a constant $0<C<\infty$ such that
\[
    \lim_{h\to0^+} \sup_{t,s\in I: 0<\tau(t,s) \le h}\frac{|X(t)-X(s)|}{w(t\,,s)} = C \quad \text{a.s.}
\]
We say that $w$ is an \emph{exact local modulus of continuity} for $X$ at a fixed point $t_0 \in I$ if there is a constant $0<C<\infty$ such that
\[
    \lim_{h\to0^+} \sup_{t\in I: 0<\tau(t,t_0) \le h}\frac{|X(t)-X(t_0)|}{w(t\,,t_0)} = C \quad \text{a.s.}
\]
Let $m: [0\,,h_0] \to [0\,,\infty)$ be a continuous function such that $m(0)=0$ and $m(h) > 0$ for all $h \in (0\,,h_0]$.
We say that $m$ is an exact \emph{uniform m-modulus of continuity} for $X$ on $(I\,,\tau)$ if there is a constant $0<C<\infty$ such that
\[
    \limsup_{h\to0^+} \sup_{t,s\in I: 0<\tau(t,s) \le h} \frac{|X(t)-X(s)|}{m(h)} = C \quad \text{a.s.}
\]
We say that $m$ is an \emph{exact local m-modulus of continuity} for $X$ at a fixed point $t_0 \in I$ if there is a constant $0<C<\infty$ such that
\[
    \limsup_{h \to 0^+} \sup_{t \in I: 0<\tau(t,t_0)\le h} \frac{|X(t)-X(t_0)|}{m(h)} = C\quad \text{a.s.}
\]
In many cases, modulus and m-modulus are equivalent:

\begin{lemma}\cite[Lemma 7.1.6]{MR}\label{lem:m-mod}
    If $m$ is non-decreasing, then $m(\tau(t\,,s))$ is an exact uniform (resp.~local) modulus of continuity for $X$ if and only if $m(h)$ is an exact uniform (resp.~local) m-modulus of continuity for $X$.
    In this case, the exact modulus and m-modulus share the same constant $C$.
\end{lemma}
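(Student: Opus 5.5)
The plan is to prove both directions by direct comparison of the two quantities. I treat the uniform case; the local case is identical with $s$ replaced by the fixed point $t_0$ and $\tau(t,s)$ by $\tau(t,t_0)$. Write
\[
M_w(h) := \sup_{0<\tau(t,s)\le h} \frac{|X(t)-X(s)|}{m(\tau(t,s))}
\quad\text{and}\quad
M_m(h) := \sup_{0<\tau(t,s)\le h} \frac{|X(t)-X(s)|}{m(h)}.
\]
We must show that $\lim_{h\to0^+} M_w(h)=C$ a.s.\ if and only if $\limsup_{h\to0^+}M_m(h)=C$ a.s. Both functions are defined pathwise, so it suffices to show $\lim_{h\to 0^+} M_w(h) = \limsup_{h\to0^+} M_m(h)$ for every continuous sample path. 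Note that $M_w(h)$ is non-decreasing in $h$, so its limit as $h\to0^+$ always exists in $[0,\infty]$.

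\emph{Upper bound for the m-modulus.} Since $m$ is non-decreasing, $\tau(t,s)\le h$ gives $m(\tau(t,s))\le m(h)$. Hence each ratio in $M_m(h)$ is bounded by the corresponding ratio in $M_w(h)$, so $M_m(h)\le M_w(h)$. Letting $h\to0^+$ yields $\limsup M_m \le \lim M_w$.

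\emph{Reverse inequality.} Fix $\varepsilon>0$ and $h>0$, and choose a pair $(t,s)$ with $0<r:=\tau(t,s)\le h$ that nearly attains $M_w(h)$, namely
\[
\frac{|X(t)-X(s)|}{m(r)} \ge M_w(h)-\varepsilon .
\]
Then $M_m(r) \ge |X(t)-X(s)|/m(r) \ge M_w(h)-\varepsilon$. Two facts are needed here.
\begin{itemize}
\item[(a)] We need $m(r)>0$. This holds because $\tau>0$ off the diagonal and $m>0$ on $(0,h_0]$.
\item[(b)] We need $r\to0$ as $h\to0$. This holds because $r\le h$.
\end{itemize}
Consequently, for a sequence $h_k\downarrow0$, the associated $r_k\le h_k$ also tend to $0$, and $M_m(r_k)\ge M_w(h_k)-\varepsilon$. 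Therefore $\limsup M_m\ge \lim M_w-\varepsilon$. Since $\varepsilon>0$ is arbitrary, combining with the previous step gives equality of the two limits pathwise, including the cases where either one is $0$ or $\infty$.

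The equivalence and the sharing of the constant $C$ follow immediately. The only delicate point is the case $M_w(h)=\infty$, where "near-attainment" must be read as: for every $L$ there is a pair $(t,s)$ whose ratio exceeds $L$. The same argument then shows $\limsup M_m=\infty$. Measurability of the suprema follows from continuity of the sample paths by restricting to a countable dense set.
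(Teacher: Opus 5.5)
Your argument is correct. The paper gives no proof of its own and only cites \cite[Lemma 7.1.6]{MR}, and your pathwise comparison is the standard elementary argument for it: $M_m(h)\le M_w(h)$ by monotonicity of $m$, and for the converse you take $r=\tau(t,s)$, which in fact gives the exact identity $M_w(h)=\sup_{0<r\le h}M_m(r)$. Stating the converse as this identity also removes the need for $\varepsilon$-near-attainment and for a separate treatment of $M_w(h)=\infty$, and it makes clear that the lemma is a purely deterministic fact about each sample path.
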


\begin{lemma}[Zero-one law]\label{lem:01}
    Let $w, \tau: I \times I \to [0\,,\infty)$ be two continuous functions that vanish on the diagonal and are strictly positive near but off the diagonal.
    Suppose $\sigma$ defined by \eqref{sigma} is continuous on $I \times I$.
    \begin{enumerate}[$(i)$]
        \item If 
        \begin{align}\label{01:um:sig/w}
            \lim_{h \to 0^+} \sup_{t,s \in I: 0<\tau(t,s)\le h} \frac{\sigma(t\,,s)}{w(t\,,s)} = 0,
        \end{align}
        then there is a non-random number $C \in [0\,,\infty]$ such that
        \begin{align}\label{01:um}
            \lim_{h \to 0^+} \sup_{t,s \in I: 0<\tau(t,s)\le h} \frac{|X(t)-X(s)|}{w(t\,,s)} = C \quad \text{a.s.}
        \end{align}
        \item Fix $t_0 \in I$. If
        \begin{align}\label{01:lm:sig/w}
            \lim_{h \to 0^+} \sup_{t \in I: 0<\tau(t,t_0)\le h} \frac{\sigma(t\,,t_0)}{w(t\,,t_0)} = 0,
        \end{align}
        then there is a non-random number $C(t_0) \in [0\,,\infty]$ such that
        \begin{align}\label{01:lm}
            \lim_{h \to 0^+} \sup_{t \in I: 0<\tau(t,t_0)\le h} \frac{|X(t)-X(t_0)|}{w(t\,,t_0)} = C(t_0) \quad \text{a.s.}
        \end{align}
    \end{enumerate}
\end{lemma}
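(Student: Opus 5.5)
We prove both parts of Lemma \ref{lem:01} by one argument: a tail-measurability argument for a Gaussian process, where the tail $\sigma$-field is trivial.

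Take part $(i)$ first. Define the random variable
\[
    Z := \lim_{h\to0^+} \sup_{t,s \in I: 0<\tau(t,s)\le h} \frac{|X(t)-X(s)|}{w(t\,,s)} \in [0\,,\infty].
\]
The limit exists because the supremum is monotone in $h$. By continuity of sample paths, the supremum can be taken over a countable dense set, so $Z$ is measurable. We must show that $Z$ is a.s. constant.

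The plan is to represent $X$ through an orthogonal expansion. The field has continuous sample paths on the compact set $I$ and continuous $\sigma$. Hence $X$ admits a Karhunen--Lo\`eve type (Itô--Nisio) expansion
\[
    X(t)=\sum_{k\ge1}\xi_k f_k(t),
\]
where the $\xi_k$ are i.i.d. standard normal, each $f_k$ is continuous, and the series converges uniformly on $I$ a.s. Continuity of $f_k$ follows because $f_k(t)=\E[X(t)\xi_k]$ with $\xi_k$ in the Gaussian Hilbert space of $X$, and
\[
    |f_k(t)-f_k(s)|\le \sigma(t\,,s).
\]

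Fix $n$ and split $X = X_n + R_n$, where $X_n=\sum_{k\le n}\xi_k f_k$. Then
\[
    |X_n(t)-X_n(s)|\le \sigma(t\,,s)\,\Big(\sum_{k\le n}\xi_k^2\Big)^{1/2},
\]
using Cauchy--Schwarz and $\sum_k (f_k(t)-f_k(s))^2=\sigma^2(t\,,s)$. By \eqref{01:um:sig/w}, the contribution of $X_n$ to $Z$ is therefore $0$ a.s.

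Consequently $Z$ equals the analogous limit computed with $R_n$ in place of $X$, up to a null set. If $Z=\infty$ this is still consistent, since the finite perturbation does not affect divergence. The limit for $R_n$ is measurable with respect to $\sigma(\xi_k:k>n)$. Since this holds for every $n$, $Z$ is measurable with respect to the tail $\sigma$-field of $(\xi_k)$. By Kolmogorov's zero-one law, $Z$ is a.s. equal to a constant $C\in[0\,,\infty]$.

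Part $(ii)$ is identical. Replace the pair $(t\,,s)$ by $(t\,,t_0)$ and use \eqref{01:lm:sig/w}. The resulting constant may depend on $t_0$.

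The main obstacle is justifying the expansion with continuous $f_k$ and a.s. uniform convergence, so that both the splitting and the measurability are legitimate. I will rely on separability of the Gaussian space of $X$ (from continuity of $\sigma$), an orthonormal basis $(\xi_k)$, and the Itô--Nisio theorem. A cleaner alternative that avoids uniform convergence: note that $X-X_n$ is, for each $t$, a.s. equal to $\E[X(t)\mid \xi_k,k>n]$. By the argument above, the functional defining $Z$ is insensitive to subtracting $X_n$. So only pointwise equality on a countable dense set is needed, together with continuity of $R_n=X-X_n$, which holds because $X_n$ is continuous.
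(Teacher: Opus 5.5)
Your proposal is correct and follows essentially the same route as the paper: a Karhunen--Lo\`eve expansion $X=\sum_k \xi_k f_k$ with continuous $f_k$, the observation via \eqref{01:um:sig/w} that each finite partial sum contributes nothing to the limit, and Kolmogorov's zero-one law applied to the tail $\sigma$-field of $(\xi_k)$. Your write-up is somewhat more careful than the paper's in three places: the Cauchy--Schwarz bound $|X_n(t)-X_n(s)|\le \sigma(t\,,s)(\sum_{k\le n}\xi_k^2)^{1/2}$ (the paper's corresponding chain omits a harmless factor depending on the number of terms), the explicit handling of the case $Z=\infty$, and the measurability argument via $X(t)-X_n(t)=\E[X(t)\mid \xi_k, k>n]$ on a countable dense set (the paper uses only $L^2$ convergence of the expansion).
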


\begin{proof}
This is an extension of \cite[Lemma 7.1.1]{MR} to more general forms of $w$ and to the case that $\tau$ is not necessarily a metric. We include a proof below for completeness.
First, continuity of $\sigma$ and Cauchy-Schwarz inequality imply that $X$ has a continuous covariance, so we may use Karhunen-Lo\`{e}ve theorem \cite[Theorem 5.3.2, Corollary 5.3.4]{MR} to write $X(t) = \sum_{n=1}^\infty f_n(t) \xi_n$ in $L^2(\Omega)$, where $f_n$ are continuous functions on $I$ and $\{\xi_n\}_{n=1}^\infty$ are i.i.d.~standard Gaussians.
Define $X_N(t) = \sum_{n=1}^N f_n(t) \xi_n$.
Since $\sigma^2(t\,,s) = \sum_{n=1}^\infty (f_n(t)-f_n(s))^2$, it follows that
\begin{align*}
    |X_N(t)-X_N(s)| \le \sum_{n=1}^N |f_n(t)-f_n(s)| \sup_{1 \le n \le N}|\xi_n| \le \sigma(t\,,s) \sup_{1 \le n \le N}|\xi_n|.
\end{align*}
From this, we see that if \eqref{01:um:sig/w} holds, then for every $N \ge 1$,
\begin{align*}
    \lim_{h\to0^+} \sup_{t,s\in I: 0<\tau(t,s) \le h} \frac{|X_N(t)-X_N(s)|}{w(t\,,s)} = 0 \quad \text{a.s.}
\end{align*}
This shows that the random variable on the left-hand side of \eqref{01:um} is measurable with respect to the tail sigma-algebra of $\{\xi_n\}_{n=1}^\infty$.
Hence, by Kolmogorov's 0-1 law, this random variable is constant a.s.
Similarly, we see that if \eqref{01:lm:sig/w} holds, then the random variable on the left-hand side of \eqref{01:lm} is constant a.s.
\end{proof}

Recall Slepian's lemma \cite{S62} (see also \cite[Lemma 5.5.1]{MR}):

\begin{lemma}\label{lem:slepian}
Let $Y$ and $Z$ be centered $\R^m$-valued Gaussian random variables.
If $\E[Y_i^2]=\E[Z_i^2]$ and $\E[Y_iY_j] \le \E[Z_i Z_j]$ for all $i,j\in \{1,\dots, m\}$, then
\begin{align*}
    \textstyle
    \P\left( \bigcup_{j=1}^m \{ Y_j > a_j \} \right) \le \P\left( \bigcup_{j=1}^m \{Z_j > a_j\} \right)
\end{align*}
for any non-random numbers $a_1,\dots, a_n \in \R$.
\end{lemma}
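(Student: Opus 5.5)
The plan is the standard Gaussian interpolation argument, phrased for the union event as the lemma is worded. Assume first, by a routine approximation, that $Y$ and $Z$ are independent and that their covariance matrices $\Sigma^Y=(\E[Y_iY_j])$ and $\Sigma^Z=(\E[Z_iZ_j])$ are nondegenerate. For $\lambda\in[0,1]$ put
\[
W(\lambda)=\sqrt{1-\lambda}\,Y+\sqrt{\lambda}\,Z .
\]
This is a centered Gaussian vector with covariance $\Sigma(\lambda)=(1-\lambda)\Sigma^Y+\lambda\Sigma^Z$. The diagonal entries of $\Sigma(\lambda)$ are constant in $\lambda$, because $\E[Y_i^2]=\E[Z_i^2]$. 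Each off-diagonal entry moves monotonically from $\E[Y_iY_j]$ to $\E[Z_iZ_j]$.

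Replace the indicator of $\bigcup_j\{W_j>a_j\}$ by a smooth version $F(w)=1-\prod_{j}g_j(w_j)$, where each $g_j$ is smooth, non-increasing and approximates ${\bf 1}_{(-\infty,a_j]}$. Consider $\Phi(\lambda)=\E F(W(\lambda))$. Let $p_\lambda$ be the Gaussian density of $W(\lambda)$. The heat-equation identity $\partial p/\partial\Sigma_{ij}=\partial^2 p/\partial w_i\partial w_j$ for $i\ne j$, followed by two integrations by parts, gives
\[
\Phi'(\lambda)=\sum_{i<j}\bigl(\E[Z_iZ_j]-\E[Y_iY_j]\bigr)\,\E\bigl[\partial_i\partial_j F(W(\lambda))\bigr].
\]
The diagonal terms drop out because $\Sigma_{ii}(\lambda)$ does not depend on $\lambda$. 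This reduces everything to the sign of $\partial_i\partial_j F$ for $i\ne j$. Since $F=1-\prod_k g_k$,
\[
\partial_i\partial_j F=-\,g_i'(w_i)\,g_j'(w_j)\prod_{k\ne i,j}g_k(w_k).
\]
Each $g_k'\le 0$ and each $g_k\ge 0$, so $\partial_i\partial_j F\le 0$ pointwise.

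The main obstacle is exactly this sign bookkeeping, and I do not see how to get it to agree with the lemma as worded. With $\E[Z_iZ_j]-\E[Y_iY_j]\ge 0$ and $\partial_i\partial_j F\le 0$, every term of $\Phi'$ is $\le 0$. So $\Phi$ is non-increasing, which gives $\Phi(0)\ge\Phi(1)$, i.e.\ $\P(\bigcup_j\{Y_j>a_j\})\ge\P(\bigcup_j\{Z_j>a_j\})$ after passing to the limit. That is the classical orientation of Slepian's lemma: more correlation shrinks the union.

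The stated inequality $\le$ would require $\partial_i\partial_j F\ge 0$, and I cannot construct a smoothing $F$ of the union indicator with that property. In fact the stated direction fails outright. Take $Y$ with independent coordinates and $Z_1=\dots=Z_m$, all with unit variance. Then $\E[Y_iY_j]=0\le 1=\E[Z_iZ_j]$, yet for $a_j=a$ one has $\P(\max_j Y_j>a)>\P(Z_1>a)$ when $m\ge2$.

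So the honest conclusion of this plan is the following. The interpolation argument proves the union inequality with $\ge$, equivalently the $\le$ inequality $\P(\bigcap_j\{Y_j\le a_j\})\le\P(\bigcap_j\{Z_j\le a_j\})$ for the intersection. The statement exactly as printed cannot be proved because it is false. In the applications below (the lower bounds via comparison with a less correlated field), it is the $\ge$ direction for unions that is needed.

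The remaining routine steps are as follows. First, justify the differentiation under the integral sign, using smooth compactly varying $g_j$ and Gaussian tails. Second, remove nondegeneracy by adding $\varepsilon$ times an independent standard Gaussian vector to both $Y$ and $Z$, which preserves all hypotheses, and then let $\varepsilon\to0$. Third, let the $g_j$ increase to ${\bf 1}_{(-\infty,a_j]}$ and use dominated convergence together with continuity of Gaussian laws at the boundary hyperplanes.
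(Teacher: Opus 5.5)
Your proposal is correct. As printed, the inequality in the lemma is reversed, and $a_1,\dots,a_n$ should read $a_1,\dots,a_m$. Your example with independent $Y_j$ and $Z_1=\dots=Z_m$ shows that the printed version is false. Every application in the paper (\eqref{slepian:Y:Z}, \eqref{slepian:lil}, and the analogous display in the proof of Theorem~\ref{th:lm:SPDE}) uses the classical direction $\P(\bigcup_j\{Y_j>a_j\})\ge\P(\bigcup_j\{Z_j>a_j\})$, which is exactly what your interpolation argument proves. The paper gives no proof of its own, only the citations \cite{S62} and \cite[Lemma 5.5.1]{MR}, and your argument via $\partial p/\partial\Sigma_{ij}=\partial^2 p/\partial w_i\partial w_j$ and $\partial_i\partial_j F\le 0$ is the standard proof of the correct statement.

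One cosmetic point concerns the final limit. A coordinate with $\Var(Y_j)=0$ and $a_j=0$ puts mass on the boundary hyperplane $\{w_j=a_j\}$. Either discard zero-variance coordinates first, since their events are deterministic and identical for $Y$ and $Z$, or let $g_j$ decrease to ${\bf 1}_{(-\infty,a_j]}$ rather than increase.
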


\begin{lemma}\label{lem:cv:cov}
Let $Y$ and $Z$ be centered jointly Gaussian random variables. 
Let $C_0 \in [0\,,1]$ and consider the following statements:
\begin{enumerate}[$(a)$]
    \item $\Var(Y|Z) \ge C_0 \Var(Y)$;\smallskip
    \item $|\E[YZ]| \le \sqrt{1-C_0} \, \|Y\|_2 \|Z\|_2$;\smallskip
    \item $\E[YZ] \le \sqrt{1-C_0} \, \|Y\|_2 \|Z\|_2$.\smallskip
\end{enumerate}
We have $(a) \Leftrightarrow (b) \Rightarrow (c)$.
\end{lemma}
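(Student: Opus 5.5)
The plan is to use the Gaussian regression formula for the conditional variance. Since $(Y,Z)$ is centered jointly Gaussian, $\Var(Y|Z)$ is deterministic, and it equals $\Var(Y) - (\E[YZ])^2/\Var(Z)$ whenever $\Var(Z)>0$. Both implications will then follow from rewriting this identity.

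First I would dispose of the degenerate cases.

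\begin{itemize}
\item If $\Var(Z)=0$, then $Z=0$ a.s. Hence $\Var(Y|Z)=\Var(Y)$ and $\E[YZ]=0$, so (a) and (b) both hold for every $C_0\in[0\,,1]$. In particular they are equivalent.
\item If $\Var(Y)=0$, then $Y=0$ a.s. Both sides of (a) and of (b) vanish, so again both statements hold trivially.
\end{itemize}

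So assume $\|Y\|_2,\|Z\|_2>0$. Write $\E[Y|Z]=cZ$ with $c=\E[YZ]/\E[Z^2]$. The residual $Y-cZ$ is uncorrelated with $Z$, hence independent of $Z$ because the pair is jointly Gaussian. Therefore
\[
\Var(Y|Z)=\E[(Y-cZ)^2]=\|Y\|_2^2-\frac{(\E[YZ])^2}{\|Z\|_2^2}.
\]
With this identity, (a) reads $\|Y\|_2^2-(\E[YZ])^2/\|Z\|_2^2\ge C_0\|Y\|_2^2$. This is equivalent to $(\E[YZ])^2\le(1-C_0)\|Y\|_2^2\|Z\|_2^2$. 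Taking square roots, which is legitimate since both sides are nonnegative and $1-C_0\ge0$, gives exactly (b). So (a)$\Leftrightarrow$(b).

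Finally, (b)$\Rightarrow$(c) is immediate from $\E[YZ]\le|\E[YZ]|$.

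There is no real obstacle here. The only points needing care are the degenerate cases and the justification that the conditional variance equals the residual variance, which uses that uncorrelated jointly Gaussian variables are independent.
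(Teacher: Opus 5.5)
Your proof is correct and follows the same route as the paper, which simply invokes the Gaussian regression identity $\Var(Y|Z)=\Var(Y)-|\mathrm{Cov}(Y,Z)|^2/\Var(Z)$ to rewrite (a) as (b) and notes that (b)$\Rightarrow$(c) is obvious. The paper leaves implicit the degenerate cases $\Var(Z)=0$ or $\Var(Y)=0$ and the residual-independence justification of that identity; you supply both.
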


\begin{proof}
$(b)\Rightarrow (c)$ is obvious. The equivalence of $(a)$ and $(b)$ follows from the identity
\[
    \Var(Y|Z) = \Var(Y) - \frac{|\mathrm{Cov}(Y,Z)|^2}{\Var(Z)}
\]
for any jointly Gaussian random variables $Y$ and $Z$.
\end{proof}

\section{Proofs of general results}
\label{s:pf:general}

\subsection{Proof of Theorem \ref{th:um:aniso}}

Set 
\[\textstyle
    w(t\,,s) = \phi(t-s) \sqrt{\log_+ \left[1/\psi(\phi(t-s))\right]}.
\]
Then \eqref{01:um:sig/w} is satisfied under Assumption \ref{a:sigma:aniso}, so we may apply Lemma \ref{lem:01}(i) to deduce that \eqref{um:aniso} holds for some constant $C \in [0\,,\infty]$.

We first prove the upper bound for $C$ under Assumptions \ref{a:sigma:aniso} and \ref{a:psi:aniso}.
Let $\mathcal{N}(\varepsilon) = \mathcal{N}(I\,,\sigma\,,\varepsilon)$ be the smallest number of $\sigma$-balls with radius $\varepsilon$ needed to cover $I$.
By Assumption \ref{a:sigma:aniso}, there is $c_0>1$ such that for all $\varepsilon>0$ small,
\[
    \mathcal{N}(\varepsilon) \le \frac{c_0|I|}{\prod_{i=1}^N \phi_i^{-1}(\varepsilon/C_2)} =  \frac{c_0|I|}{\psi(\varepsilon/C_2)}
\]
where $|I|$ denotes the volume of $I$.
By \cite[Theorem 1.3.5]{AT}, we can find a universal constant $0<K<\infty$ and a random variable $\eta \in (0\,,\infty)$ such that almost surely,
\begin{align*}
    \sup_{t,s \in I: \sigma(t,s) \le h}|X(t)-X(s)| \le K \int_0^h \sqrt{\log \mathcal{N}(\varepsilon)} \, \d \varepsilon \qquad \forall h \in (0\,,\eta].
\end{align*}
Then, we may use \eqref{sigma:phi}, the change of variable $r = \psi(\varepsilon/C_2)$, integration by parts, followed by \eqref{psi:1} and \eqref{psi:2}, to deduce that
\begin{align*}
    &\sup_{t,s\in I : \phi(t,s) \le h} |X(t)-X(s)| 
    \le KC_2 \int_0^{\psi(h)} \sqrt{\log\left(\frac{c_0|I|}{r}\right)} \, \d \psi^{-1}(r)\\
    & = K C_2 h \sqrt{\log \left(\frac{c_0|I|}{\psi(h)}\right)} + \tfrac12 KC_2 \int_0^{\psi(h)} \frac{\psi^{-1}(r)}{r\sqrt{\log(c_0|I|/r)}} \, \d r\\
    & \le KC_2\left(1 + \tfrac12 C_3 + o(1)\right) h \sqrt{\log\left( \frac{1}{\psi(h)}\right)} \qquad \text{as $h \to 0^+$}.
\end{align*}
It follows that
\[
    \lim_{h \to 0^+} \sup_{t,s\in I: 0<\phi(t,s) \le h}\frac{|X(t)-X(s)|}{h\sqrt{\log(1/\psi(h))}} \le KC_2(1+C_3/2) \quad \text{a.s.}
\]
By \cite[Lemma 7.1.6]{MR}, the preceding implies that
\[
    \lim_{h \to 0^+} \sup_{t,s\in I: 0<\phi(t,s) \le h}\frac{|X(t)-X(s)|}{\phi(t-s)\sqrt{\log(1/\psi(\phi(t-s)))}} \le KC_2(1+C_3/2) \quad \text{a.s.}
\]
This proves that $C\le KC_2(1+C_3/2) < \infty$.

Next, we show the lower bound for $C$ provided in addition that Assumption \ref{a:cv:aniso} holds.
In particular, we can choose a sequence $h_n \downarrow 0$ and for each $n \in \N$, a set of points $( t_{n,i}\,, s_{n,i})_{i=1}^m$ in $I$ such that:
\begin{enumerate}[(1)]
    \item $\phi(t_{n,i}-s_{n,i}) = h_n$ for $i=1,\dots,m$;
    \item $\phi(s_{n,i}-s_{n,j}) \ge \psi^{-1}(A_0 (\psi(h_n))^\rho)$ for any distinct $i,j \in \{1,\dots, m\}$;
    \item $m = m(n) \ge B_0 (\psi(h_n))^{-\rho}$; and
    \item for every pair of distinct $i,j \in \{1,\dots, m\}$,
    \begin{align}\begin{split}\label{cv:ij}
        &\E[(X(t_{n,i})-X(s_{n,i}))(X(t_{n,j})-X(s_{n,j}))]\\
        &\qquad \le \sqrt{1-C_0} \|X(t_{n,i})-X(s_{n,i})\|_2 \|X(t_{n,j})-X(s_{n,j})\|_2.
    \end{split}\end{align}
\end{enumerate}
In fact, by choosing a maximal set of points with $\phi(s_{n,i}-s_{n,j}) \ge \psi^{-1}(A_0(\psi(h_n))^\rho)$, we see that the $\phi$-balls centered at $s_{n,i}$ with radius $\psi^{-1}(A_0(\psi(h_n))^\rho)$ form a covering of $I$. Since each ball has volume $A_0(\psi(h_n))^\rho$, this shows that $m$ may be taken as large as the number of such balls, which is $\asymp (\psi(h_n))^{-\rho}$.

Let $\xi_0, \xi_1,\dots$ be i.i.d.~standard Gaussian random variables, and define
\begin{align*}
    Z_i = (1-C_0)^{1/4} \xi_0 + ( 1- \sqrt{1-C_0})^{1/2} \xi_i \quad \text{for $i \in \N_0$.}
\end{align*}
For each $n \in \N$, define
\begin{align*}
    Y_{n,i} = \frac{X(t_{n,i})-X(s_{n,i})}{\sigma(t_{n,i}\,,s_{n,i})} \quad \text{for $i=1,\dots, m$.}
\end{align*}
Then, we have
\begin{align*}
    \E[Y_{n,i}^2] = 1 = \E[Z_{i}^2] \quad \text{for $i=1,\dots, m$.}
\end{align*}
Also, it follows from \eqref{cv:ij} that 
\begin{align*}
    \E[Y_{n,i}Y_{n,j}] \le \sqrt{1-C_0} = \E[Z_{i}Z_{j}] \quad \text{for $i\ne j$.}
\end{align*}
Therefore, an application of Slepian's lemma (Lemma \ref{lem:slepian}) for any large but fixed $n\in \N$ and any fixed number $c>0$ yields
\begin{align}\label{slepian:Y:Z}
    \P\left( \bigcup_{i=1}^m \left\{ Y_{n,i} > c \sqrt{\log(1/\psi(h_n))}\right\} \right) 
    \ge \P\left( \bigcup_{i=1}^m \left\{Z_i > c \sqrt{\log(1/\psi(h_n))} \right\} \right).
\end{align}
Choose and fix a number $a$ such that
\begin{align}\label{um:aniso:a}
    0<a< C_1\left[ 2\rho \, \big(1-\sqrt{1-C_0}\big)\right]^{1/2}.
\end{align}
We claim that if $\delta>0$ is sufficiently small, then
\begin{align}\label{slepian:claim}
    \lim_{n\to\infty} \P\left( \bigcup_{i=1}^m \left\{ \xi_i > \frac{(1+\delta) aC_2\sqrt{\log(1/\psi(h_n))}}{(1-\sqrt{1-C_0})^{1/2}}  \right\} \right) = 1.
\end{align}
In fact, due to \eqref{um:aniso:a}, we may choose and fix $\delta>0$ sufficiently small such that 
\begin{align}\label{um:aniso:delta}
    (1+\delta)a < C_1 \left[ 2\rho\, \big(1-\sqrt{1-C_0}\big)\right]^{1/2}.
\end{align}
Then, 
by the independence of $(\xi_i)_{i=1}^m$, the standard Gaussian tail estimate 
\begin{align}\label{gaus:tail}
    \frac{z^{-1} e^{-z^2/2}}{2\sqrt{2\pi}}  \le \P\{\xi_1 > z\} \le \frac{e^{-z^2/2}}{\sqrt{2\pi}}  \quad \forall z \ge 1,
\end{align}
the inequality $1-x \le e^{-x}$ for all $x \ge 0$, and the bound $m \ge B_0 (\psi(h_n))^{-\rho}$, we have for sufficiently large $n$ that
\begin{align*}
    &\P\left( \bigcup_{i=1}^m \left\{ \xi_i > \frac{(1+\delta)a\sqrt{\log(1/\psi(h_n))}}{C_1(1-\sqrt{1-C_0})^{1/2}}  \right\} \right)\\
    & = 1 - \prod_{i=1}^m\left[ 1 - \P\left\{ \xi_1 > \frac{(1+\delta)a\sqrt{\log(1/\psi(h_n))}}{C_1(1-\sqrt{1-C_0})^{1/2}} \right\} \right]\\
    & \ge 1 - \exp\left( - \frac{m}{\sqrt{2\pi}} \frac{C_1(1-\sqrt{1-C_0})^{1/2}}{2 (1+\delta)a\sqrt{\log(1/\psi(h_n))}} \exp\left(-\frac{(1+\delta)^2 a^2 \log(1/\psi(h_n))}{2C_1^2(1-\sqrt{1-C_0})}\right) \right)\\
    & \ge 1 - \exp\left( - \frac{B_0}{\sqrt{2\pi}} \frac{C_1(1-\sqrt{1-C_0})^{1/2}}{2 (1+\delta)a\sqrt{\log(1/\psi(h_n))}}
    (\psi(h_n))^{\frac{(1+\delta)^2 a^2 }{2C_1^2(1-\sqrt{1-C_0})}-\rho} \right).
\end{align*}
Thanks to \eqref{um:aniso:delta}, the exponential term tends to 0 as $n \to \infty$. This yields \eqref{slepian:claim}.

We may now complete the proof of the theorem as follows. By \eqref{sigma:phi} and \eqref{slepian:Y:Z}, it follows that for sufficiently large $n$,
\begin{align*}
    &\P(E_n):=\P\left\{ \sup_{t, s\in I: 0<\phi(t-s) \le h_n} \frac{|X(t)-X(s)|}{\phi(t-s)\sqrt{\log(1/\psi(\phi(t-s)))}} \ge a \right\}\\
    & \ge \P\left( \bigcup_{i=1}^m \left\{ Y_{n,i} > \frac{a}{C_1} \sqrt{\log(1/\psi(h_n))} \right\}\right)
    \ge \P\left( \bigcup_{i=1}^m \left\{ Z_{i} > \frac{a}{C_1} \sqrt{\log(1/\psi(h_n))} \right\}\right)\\
    & = \P\left\{ (1-C_0)^{1/4} \xi_0 + (1-\sqrt{1-C_0})^{1/2} \max_{1\le i \le m} \xi_i > \frac{a}{C_1} \sqrt{\log(1/\psi(h_n))} \right\}\\
    & \ge \P\left\{ \max_{1\le i \le m} \xi_i > \frac{(1+\delta)a  \sqrt{\log(1/\psi(h_n))}}{C_1(1-\sqrt{1-C_0})^{1/2}} \right\} - \P\left\{ \xi_0 < -\frac{\delta a  \sqrt{\log(1/\psi(h_n))}}{C_1(1-C_0)^{1/4}} \right\}.
\end{align*}
Then, we may let $n \to \infty$, use monotonicity of the events $E_n$, and apply \eqref{slepian:claim} and $\sqrt{\log(1/\psi(h_n))} \to +\infty$ to the last two probabilities respectively to deduce that
\begin{align*}
    &\P\left\{ \lim_{n\to \infty}\sup_{t, s\in I: 0<\phi(t-s) \le h_n} \frac{|X(t)-X(s)|}{\phi(t-s)\sqrt{\log(1/\psi(\phi(t-s)))}} \ge a\right\}\\
    &= \lim_{n\to\infty}\P(E_n)
    \ge \lim_{n \to \infty}\P\left\{ \max_{1\le i \le m} \xi_i > \frac{(1+\delta) a \sqrt{\log(1/\psi(h_n))}}{C_1(1-\sqrt{1-C_0})^{1/2}} \right\} - 0 = 1.
\end{align*}
This proves that $C \ge a$ for every $a$ satisfying \eqref{um:aniso:a}.
Therefore, this implies that $C \ge C_1 [2\rho\,(1-\sqrt{1-C_0})]^{1/2}$ and completes the proof of the theorem.
\qed

\subsection{Proof of Theorem \ref{th:lm:aniso}}

Fix $t_0\in I$ and set 
\[
    w(t\,,t_0) = \phi(t-t_0)\sqrt{\ell(1/\psi(\phi(t-t_0)))}.
\]
Since $\lim_{x\to\infty} \ell(x) = \infty$, Assumption \ref{a:sigma:aniso} and Lemma \ref{lem:01}(ii) imply that \eqref{lm:aniso} holds for some constant $C=C(t_0) \in [0\,,\infty]$.

We first prove the upper bound for $C$ under Assumptions \ref{a:sigma:aniso} and \ref{a:psi:lil}.
Fix a sequence $q_n \downarrow 0$, and 
define $S_n = \{ t \in I : \phi(t-t_0) \le q_n \}$ for each $n \in \N$.
Let $Y(t) = X(t)-X(t_0)$. Then $\|Y(t)-Y(s)\|_2 = \sigma(t\,,s)$ for all $t,s \in I$.
Let $\mathcal{N}_n(\varepsilon) = \mathcal{N}(S_n\,,\sigma\,,\varepsilon)$ be the smallest number of $\sigma$-balls with radius $\varepsilon$ needed to cover $S_n$.
Under Assumption \ref{a:sigma:aniso}, the $\sigma$-diameter of $S_n$ satisfies $D_n = \sup_{t \in S_n}\sigma(t\,,t_0) \le C_2 q_n$ and for $n$ large,
\[
    \mathcal{N}_n(\varepsilon)
    \le \frac{2 \psi(q_n)}{\psi(\varepsilon/C_2)} \quad \forall \varepsilon \in (0\,,D_n].
\]
By Dudley's theorem \cite{Dudley} (see also \cite[Theorem 1.3.3]{AT} or \cite[Theorem 6.1.2]{MR}), there is a universal constant $0<K<\infty$ such that
\begin{align*}
    \E\sup_{t\in S_n}|X(t)-X(t_0)|=\E\sup_{t\in S_n}|Y(t)|
    &\le K\int_0^{D_n} \sqrt{\log\mathcal{N}_n(\varepsilon)} \, \d \varepsilon\\
    &\le K \int_0^{C_2 q_n} \sqrt{\log\left( \frac{2\psi(q_n)}{\psi(\varepsilon/C_2)} \right)} \, \d \varepsilon.
\end{align*}
Then, we may use the change of variable $r = \psi(\varepsilon/C_2)$, integration by parts, and Assumption \ref{a:psi:lil} to deduce that
\begin{align*}
    &\E\sup_{t\in S_n}|X(t)-X(t_0)|
    \le KC_2 \int_0^{\psi(q_n)} \sqrt{\log\left( \frac{2\psi(q_n)}{r} \right)} \, \d \psi^{-1}(r)\\
    &= KC_2 q_n \sqrt{\log 2} + KC_2 \int_0^{\psi(q_n)} \frac{\psi^{-1}(r)}{2r\sqrt{\log(2 \psi(q_n)/r)}} \, \d r\\
    & = KC_2 q_n \sqrt{\log 2} + KC_2  \int_0^1 \frac{\psi^{-1}(\psi(q_n) r)}{2r\sqrt{\log(2/r)}} \, \d r\\
    &\le (KC_2C_4/2 +o(1))q_n\sqrt{\ell (1/ \psi(q_n)) } \quad \text{as $n\to\infty$.}
\end{align*}
By Borell's inequality \cite{Borell} (see also \cite[Theorem 2.1.1]{AT}) and the preceding, it follows that for any fixed $a>0$, $\delta>0$ and sufficiently large $n$,
\begin{align*}
    p_n&:=\P\left\{ \sup_{t \in I: q_{n+1}\le \phi(t-t_0)\le q_n} \frac{|X(t)-X(t_0)|}{\phi(t-t_0)\sqrt{\ell(1/ \psi(\phi(t-t_0)))}} > (a+KC_2C_4/2)(1+\delta) \right\}\\
    & \le \P\left\{ \sup_{t \in I: \phi(t-t_0) \le q_n} \frac{|X(t)-X(t_0)|}{q_{n+1}\sqrt{\ell(1/ \psi(q_n))}} > (a+KC_2C_4/2)(1+\delta) \right\}\\
    &\le \P\left\{ \sup_{t \in S_n}|X(t)-X(t_0)| - \E\sup_{t \in S_n}|X(t)-X(t_0)| > a q_{n+1} \sqrt{\ell(1/ \psi(q_n)) } \right\}\\
    & \le 2 \exp\left( - \frac{a^2 q_{n+1}^2\ell(1/ \psi(q_n)) }{2 \sup_{t \in S_n}\sigma^2(t\,,t_0)} \right)
    \le 2 \exp\left(-\frac{a^2 q_{n+1}^2 \ell(1/ \psi(q_n)) }{2C_2^2 q_n^2}\right),
\end{align*}
where we have used \eqref{sigma:phi} to obtain the last inequality.
Since each $\phi_i$ is regularly varying at 0 of index $\theta_i \in [0\,,1]$, for fixed $\epsilon>0$, we have $\phi_i(r) \gtrsim r^{\theta_i+\epsilon}$ as $r \to 0^+$ for every $i$ (see \cite[Prop 1.3.6(v)]{BGT} or \cite[p.18]{Seneta}), and hence monotonicity of $\phi_i$ implies that
$\phi_i^{-1}(r) \lesssim r^{1/(\theta_i+\epsilon)}$ and $\psi(r) \lesssim r^{\nu_\epsilon}$ as $r \to 0^+$, where $\nu_\epsilon = \sum_{i=1}^N 1/(\theta_i+\epsilon)$.
We now consider the following two cases:

Case (1): $\ell(x) = \log\log(x)$ and $h_n = e^{-\rho n^\gamma}$, where $\rho>0$ and $\gamma>0$.
In this case, fix a number $q>1$ and take $q_n = q^{-n}$.
Then, for a sufficiently large $n_0$,
\begin{align*}
    \sum_{n=n_0}^\infty p_n 
    \lesssim 2\sum_{n=n_0}^\infty \left[ \log(1/\psi(q^{-n})) \right]^{-\frac{a^2}{2C_2^2q^2}}
    \lesssim \sum_{n=n_0}^\infty  n^{-\frac{a^2}{2C_2^2 q^2}},
\end{align*}
which is finite provided that $a>\sqrt{2} C_2 q$.

Case (2): $\ell(x) = \log(x)$ and $h_n = n^{-\gamma}$, where $\gamma>0$.
In this case, take $q_n = 1/n$. 
Then, for a sufficiently large $n_0$,
\begin{align*}
    \sum_{n=n_0}^\infty p_n \le 2 \sum_{n=n_0}^\infty \left[\psi(1/n)\right]^{\frac{a^2n^2}{2C_2^2(n+1)^2}} \lesssim \sum_{n=n_0}^\infty n^{-\frac{\nu_\epsilon a^2n^2}{2C_2^2(n+1)^2}},
\end{align*}
which is summable provided that $a>\sqrt{2}C_2/\sqrt{\nu_\epsilon}$.

In both cases, an application of the Borel-Cantelli lemma and the arbitrariness of $\delta>0$ imply that
\[
    \lim_{n\to\infty} \sup_{t \in I: 0<\phi(t-t_0) \le q_n} \frac{|X(t)-X(t_0)|}{\phi(t-t_0)\sqrt{\ell(1/\psi(\phi(t-t_0)))}} \le a+KC_2C_4/2 \quad \text{a.s.}
\]
and hence $C\le a+KC_2C_4/2$.
This shows \eqref{lm:C:H<1}.

Next, we prove the lower bound for $C$ under the additional Assumption \ref{a:cv:lil}.
Let $(t_n)_{n=1}^\infty$ be the sequence in $I$ given by Assumption \ref{a:cv:lil}.
In particular, there exist $c_1 \in (0\,,1)$ and $c_2>1$ such that $c_1 h_n \le \psi(\phi(t_n-t_0)) \le c_2 h_n$ for all $n \ge n_0$.
Let $\xi_0,\xi_1,\dots$ be i.i.d.~standard Gaussian random variables.
For each $n \in \N$, define
\begin{align}\label{lil:Y:Z}
    Y_n = \frac{X(t_n)-X(t_0)}{\sigma(t_n\,,t_0)}, \quad
    Z_n = (1-C_0)^{1/4}\xi_0 + (1-\sqrt{1-C_0})^{1/2} \xi_n.
\end{align}
Then $\E[Y_n^2] = 1 = \E[Z_n^2]$ and
\[
    \E[Y_nY_m] \le \sqrt{1-C_0} = \E[Z_nZ_m] \quad \text{for all $m>n\ge n_0$,}
\]
thanks to Assumption \ref{a:cv:lil} and Lemma \ref{lem:cv:cov}.
Hence, we may apply Slepian's lemma (Lemma \ref{lem:slepian}) to deduce that for any fixed number $c>0$ and $M>m\gg n_0$,
\begin{align}\label{slepian:lil}
    \P\left( \bigcup_{n=m}^M \left\{ Y_n >  c \sqrt{\ell(\tfrac{1}{c_1 h_n})}\right\} \right) \ge \P \left( \bigcup_{n=m}^M \left\{ Z_n >  c \sqrt{\ell (\tfrac{1}{c_1 h_n})}\right\}  \right).
\end{align}
By independence of $(\xi_n)_{n=1}^\infty$, the Gaussian tail estimate \eqref{gaus:tail} and the inequality $1-x \le e^{-x}$ for all $x \ge 0$, we have for $M>m \gg n_0$ that
\begin{align}\begin{split}\label{P:xi:lil}
    \P\left(\bigcup_{n=m}^M \left\{ \xi_n > c \sqrt{\ell(1/h_n)}\right\} \right)
    &= 1 - \prod_{n=m}^M \left[ 1 - \P\left\{ \xi_n > c \sqrt{\ell(1/h_n)} \right\} \right]\\
    &\ge 1 - \exp\left( - \frac{1}{2\sqrt{2\pi}} \sum_{n=m}^M \frac{\exp(-\frac{c^2}{2}\ell(1/h_n))}{c\sqrt{\ell(1/h_n)}}  \right).
\end{split}\end{align}
Now, it follows from \eqref{sigma:phi}, the fact that $\psi(\phi(t_n-t_0)) \ge c_1 h_n$, the fact that $\ell$ is increasing, and \eqref{slepian:lil} that for any fixed $a>0$, $\delta>0$, and $M>m \gg n_0$,
\begin{align*}
    &\P(F_{m,M}):=\P\left\{ \sup_{t \in I: c_1 h_M\le \psi(\phi(t-t_0))\le c_2 h_m} \frac{|X(t)-X(t_0)|}{\phi(t-t_0)\sqrt{\ell(1/\psi(\phi(t-t_0)) )}} \ge a \right\}\\
    & \ge \P\left\{ \max_{m \le n \le M} \frac{X(t_n)-X(t_0)}{\sigma(t_n\,,t_0) \sqrt{\ell(\tfrac{1}{c_1 h_n})}} > \frac{a}{C_1} \right\}\\
    &\ge \P\left( \bigcup_{n=m}^M \left\{ Y_n > \frac{a}{C_1}\sqrt{\ell(\tfrac{1}{c_1 h_n})} \right\} \right) 
    \ge \P\left( \bigcup_{n=m}^M \left\{ Z_n > \frac{a}{C_1}\sqrt{\ell(\tfrac{1}{c_1 h_n})} \right\} \right)\\
    & \ge \P\left( \bigcup_{n=m}^M \left\{\xi_n > \frac{(1+\delta)a\sqrt{\ell(1/h_n)}}{C_1(1-\sqrt{1-C_0})^{1/2}}\right\} \right) - \P\left\{ \xi_0 < - \frac{\delta a \sqrt{\ell(1/h_m)}}{C_1(1-C_0)^{1/4}}\right\}.
\end{align*}
In both Case (1) and Case (2), plugging in the estimate in \eqref{P:xi:lil} implies that there is $c>0$ such that for sufficiently large $M>m\gg n_0$,
\[
    \P(F_{m,M}) \ge 1 - \exp\left( -c  \sum_{n=m}^M \frac{n^{-\frac{\gamma (1+\delta)^2a^2}{2C_1^2(1-\sqrt{1-C_0})}}}{\sqrt{\log ( n^\gamma)}} \right) - \P\left\{ \xi_0 < - c \sqrt{\log m}\right\}.
\]
Hence, for any fixed $\delta>0$ and $a>0$ with
\begin{align}\label{a:lil}
    a < \frac{C_1}{1+\delta} \left[ 2\gamma^{-1} \big(1-\sqrt{1-C_0}\big) \right]^{1/2},
\end{align}
we may let $M\to\infty$ first, then $m\to\infty$ to deduce that
\begin{align*}
    \lim_{m\to\infty} \sup_{t \in I: 0<\phi(t-t_0) \le \psi^{-1}(c_2 h_m)} \frac{|X(t)-X(t_0)|}{\phi(t-t_0)\sqrt{\ell(1/\psi(\phi(t-t_0)))}} \ge a \quad \text{a.s.}
\end{align*}
Since the last display holds for arbitrary $\delta>0$ and $a$ satisfying \eqref{a:lil}, we conclude that $C \ge C_1 [2\gamma^{-1}(1-\sqrt{1-C_0})]^{1/2}$. 
This completes the proof of Theorem \ref{th:lm:aniso}.
\qed

\section{Proofs of Theorems \ref{th:um} and \ref{th:lm}}
\label{s:pf:main}

\subsection{Proof of Theorem \ref{th:um}}

First, Lemma \ref{lem:01}(i) implies that \eqref{um} holds for some constant $K_1 \in [0\,,\infty]$.
We aim to show that $0<K_1<\infty$.

Let $\phi_i(r)=\varphi(r)$ and $\phi(t-s) = \max_{1\le i \le N}\varphi(|t^i-s^i|)$.
From \eqref{psi}, we have
\begin{align}\label{psi:psi-1}
    \psi(r) = (\varphi^{-1}(r))^N \quad \text{and} \quad \psi^{-1}(r) = \varphi(r^{1/N}).
\end{align}
By \eqref{rv:bd}, there exists $c_0>1$ such that $\varphi(\sqrt{N}r) \le c_0\varphi(r)$ for all $r>0$ small.
Then, monotonicity of $\varphi$ and the preceding imply that
\begin{align}\begin{split}\label{phi:phi:phi}
    \phi(t-s) \le \varphi(|t-s|) &
    \le \max_{1\le i \le N}\varphi\big(\sqrt{N} |t^i-s^i|\big)
    \le c_0\phi(t-s)
\end{split}\end{align}
for all $t,s\in I$ with $|t-s|$ small.
Hence, Assumptions \ref{a:sigma:aniso} and \ref{a:psi:aniso} are satisfied.
Next, we verify Assumption \ref{a:cv:aniso} in order to apply Theorem \ref{th:um:aniso}.
By the current assumption of Theorem \ref{th:um}, there exists $\varepsilon_0 \in (0\,,1]$ such that \eqref{um:cv} holds for all $t,s,t',s' \in I$ with 
\begin{align}\label{t-s:t'-s'}
    |t-s| \le \varepsilon, \qquad |t'-s'| \le \varepsilon
\end{align}
and
\begin{align}\label{s-s'}
    |s-s'| \ge a_0 \varepsilon^\rho
\end{align}
as long as $\varepsilon \in (0\,,\varepsilon_0]$.
Set $h=c_0^{-1}\varphi(\varepsilon)$.
By \eqref{phi:phi:phi}, for $h>0$ small, $\phi(t-s) \le h$ and $\phi(t'-s') \le h$ imply \eqref{t-s:t'-s'}.
By \eqref{rv:bd:inv}, we can find $c_1, c_2, r_1>0$ such that
\begin{align}\label{phi:inv:bd}
    \varphi^{-1}(c_2 \varphi(r)) \le c_1 r \le \varphi^{-1}(c_0^{-1}\varphi(r)) \quad \forall r \in (0\,,r_1].
\end{align}
If we choose $A_0=a_0^N c_1^{-N\rho}$, then it follows from \eqref{psi:psi-1}, \eqref{phi:phi:phi}, \eqref{phi:inv:bd} that $\phi(s-s') \ge \psi^{-1}(A_0(\psi(h))^\rho)$ implies \eqref{s-s'} for $h>0$ small:
\begin{align*}
    |s-s'| \ge A_0^{1/N} (\varphi^{-1}(h))^{\rho} = \left(A_0^{1/(N\rho)} \varphi^{-1}(c_0^{-1}\varphi(\varepsilon))\right)^\rho \ge a_0\varepsilon^\rho.
\end{align*}
Hence, \eqref{um:cv} holds for all $t,s,t',s'\in I$ with $\phi(t-s) \le h$, $\phi(t'-s') \le h$ and $\phi(s-s') \ge \psi^{-1}(A_0(\psi(h))^\rho)$ provided that $h>0$ is sufficiently small.
Thanks to Lemma \ref{lem:cv:cov}, we have thus verified Assumption \ref{a:cv:aniso}.
Therefore, by Theorem \ref{th:um:aniso} and the first identity in \eqref{psi:psi-1},
\[
    \lim_{h\to0^+} \sup_{t,s\in I: 0<\phi(t-s) \le h} \frac{|X(t)-X(s)|}{\phi(t-s)\sqrt{\log[1/\varphi^{-1}(\phi(t-s))]}} = C\sqrt{N} \quad \text{a.s.}
\]
for some constant $0<C<\infty$.
From this, we can use monotonicity of $\varphi^{-1}$ together with \eqref{phi:phi:phi} and \eqref{phi:inv:bd} to deduce that $\varphi^{-1}(\phi(t-s)) \asymp |t-s|$ as $|t-s|\to0^+$ and
\[
    0<\lim_{h\to0^+} \sup_{t,s\in I: 0<|t-s| \le h} \frac{|X(t)-X(s)|}{\varphi(|t-s|)\sqrt{\log(1/|t-s|)}} <\infty \quad \text{a.s.}
\]
This shows that $0<K_1<\infty$ and completes the proof of Theorem \ref{th:um}.
\qed

\subsection{Proof of Theorem \ref{th:lm}}

First, Lemma \ref{lem:01}(ii) implies that \eqref{lm} holds for some constant $K_2 \in [0\,,\infty]$. It remains to show that $0<K_2<\infty$.

Again, let $\phi_i(r)=\varphi(r)$ and $\phi(t-s) = \max_{1\le i \le N}\varphi(|t^i-s^i|)$.
By \eqref{psi:psi-1}, Assumptions \ref{a:sigma:aniso} and \ref{a:psi:lil} are satisfied with $\ell(x) = \log\log(x)$.
We verify Assumption \ref{a:cv:lil} next.
By the underlying assumption of Theorem \ref{th:lm}, there is a sequence $(t_n)_{n=n_0}^\infty$ in $I$ with $|t_n-t_0|=a^{-n}$ such that \eqref{lm:cv} holds.
In particular, by \eqref{psi:psi-1} and monotonicity of $\varphi$, we have
\begin{align*}
    \psi(\phi(t_n-t_0)) \le (\varphi^{-1}(\varphi(|t_n-t_0|)))^N = |t_n-t_0|^N = a^{-Nn}
\end{align*}
and
\begin{align*}
    \psi(\phi(t_n-t_0)) 
    &= \Big[\varphi^{-1}\Big(\max_{1\le i \le N}\varphi(|t_n^i-t_0^i|)\Big)\Big]^N 
    = \max_{1\le i \le N} \Big[\varphi^{-1}(\varphi(|t_n^i-t_0^i|))\Big]^N\\
    & \ge \max_{1\le i \le N} |t_n^i-t_0^i|^N \ge (N^{-1/2} |t_n-t_0| )^N = N^{-N/2} a^{-Nn}
\end{align*}
for all $n \ge n_0$.
This verifies Assumption \ref{a:cv:lil} with $h_n = e^{-(N \log a) n}$, thanks to Lemma \ref{lem:cv:cov}.
Therefore, we may apply Case (1) of Theorem \ref{th:lm:aniso} to deduce that
\[
    \lim_{h \to0^+} \sup_{t \in I: 0<\phi(t-t_0) \le h} \frac{|X(t)-X(t_0)|}{\phi(t-t_0)\sqrt{\log\log[1/\varphi^{-1}(\phi(t-t_0))]}} = C \quad \text{a.s.}
\]
for some constant $0<C<\infty$.
This, together with monotonicity of $\varphi^{-1}$, \eqref{phi:phi:phi} and \eqref{phi:inv:bd}, implies that
\[
    0<\lim_{h\to0^+}\sup_{t \in I: 0<|t-t_0| \le h} \frac{|X(t)-X(t_0)|}{\varphi(|t-t_0|)\sqrt{\log\log(1/|t-t_0|)}} < \infty \quad \text{a.s.}
\]
Hence, $0<K_2 < \infty$ and the proof of Theorem \ref{th:lm} is complete.

\section{Linear SPDEs}
\label{s:SPDE}

In this section, we discuss an application of our results to a class of linear SPDEs.
Choose and fix parameters
\begin{align*}
    \alpha>0, \quad 0<\nu\le 1, \quad 0< \ell \le d,
\end{align*}
and consider the SPDE
\begin{align}\label{SPDE:general}
    \begin{cases}
        \partial_t u(t\,,x) = -(-\Delta)^{\alpha/2} u(t\,,x) + \dot{W}(t\,,x), \quad t \ge 0, x \in \R^d,\\
        u(0\,,\cdot) = 0,
    \end{cases}
\end{align}
where $-(-\Delta)^{\alpha/2}$ denotes fractional Laplacian and $\dot{W}$ is a centered generalized Gaussian noise with covariance
\begin{align}\label{W:cov}
    \E[\dot{W}(t\,,x)\dot{W}(s\,,y)] = \Lambda_{1,\nu}(t-s) \Lambda_{d,\ell}(x-y),
\end{align}
where
\begin{align*}
    \Lambda_{n,\ell}(z) 
    := \begin{cases}
    \delta(z) & \text{if $\ell=n$,}\\
    |z|^{-\ell} & \text{if $0<\ell<n$,}
    \end{cases}
    \qquad \text{for $z \in \R^n$.}
\end{align*}
Note that
\begin{align*}
    \nu = 2-2H, \qquad \text{where $1/2 \le H<1$,}
\end{align*}
corresponds to a Gaussian noise that is fractional in time with Hurst index $H$.
According to standard SPDE theory \cite{Walsh, Dalang}, \eqref{SPDE:general} has a pointwise defined mild solution 
\begin{align}\label{mild:sol}
    u(t\,,x) = \int_0^t \int_{\R^d} G(t-s\,,x-y) W(\d s\, \d y) \qquad \forall t > 0, x \in \R^d,
\end{align}
if and only if the above Wiener integral is well defined pointwise.
In \eqref{mild:sol}, $G$ denotes the fundamental solution of $\partial_t + (-\Delta)^{\alpha/2}$ whose Fourier transform in the spatial variable is
\begin{align}\label{G:hat}
    \hat{G}(t,\xi) = e^{-t |\xi|^\alpha}
    \qquad \forall t > 0, \xi \in \R^d.
\end{align}
By \cite{HSWX20, CLX}, a pointwise defined solution \eqref{mild:sol} exists if and only if
\begin{align}\label{ns:cond:sol}
    \alpha > \frac{\ell}{2-\nu} = \frac{\ell}{2H}.
\end{align}
In this case, the solution $u$ is a pointwise defined centered Gaussian random field.

Throughout the remainder of this section, we assume condition \eqref{ns:cond:sol} and discuss the spatial and temporal regularities of the solution.
Set
\begin{align*}
    \theta_1 := H-\frac{\ell}{2\alpha}
    \qquad \text{and} \qquad 
    \theta_2 
    := \alpha H - \frac{\ell}{2}
    = \alpha \theta_1.
\end{align*}
Note that $0<\theta_1<1$ and $0<\theta_2<\alpha$.
According to \cite{HSWX20, LX23, CLX}, the following regularity result holds.

\begin{lemma}\label{lem:SPDE:reg}
    Fix a compact rectangle $I \times J \subset (0\,,\infty) \times \R^d$.
    Then, under \eqref{ns:cond:sol}, we have 
    \begin{align*}
        \|u(t\,,x)-u(s\,,y)\|_2 \asymp \phi((t\,,x)\,,(s\,,y)) := \phi_1(|t-s|) \vee \phi_2(|x-y|),
    \end{align*}
    uniformly for all $(t\,,x),(s\,,y) \in I\times J$, where
    \begin{align*}
        &\phi_1(|t-s|) := |t-s|^{\theta_1} \quad \text{and}\\
        &\phi_2(|x-y|) := \begin{cases}
            |x-y|^{\theta_2}, & 0<\theta_2<1,\\
            |x-y|\sqrt{\log_+(1/|x-y|)},& \theta_2 = 1,\\
            |x-y|, & \theta_2>1.
        \end{cases}
    \end{align*}
    Moreover, $t \mapsto u(t\,,x)$ is a.s.~H\"older continuous on $I$ of order $\theta_1^-$; $x \mapsto u(t\,,x)$ is a.s.~H\"older continuous on $J$ of order $\theta_2^-$ when $0<\theta_2<1$, and a.s.~continuously differentiable when $\theta_2>1$.
\end{lemma}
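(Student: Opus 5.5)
We reduce the two-sided estimate to Fourier computations for the mild solution \eqref{mild:sol}, separating the temporal and spatial increments. By the triangle inequality,
\[
\|u(t,x)-u(s,y)\|_2 \le \|u(t,x)-u(s,x)\|_2 + \|u(s,x)-u(s,y)\|_2 ,
\]
so the upper bound follows from the two one-parameter upper bounds. For the lower bound it suffices to show
\[
\|u(t,x)-u(s,y)\|_2 \gtrsim \phi_1(|t-s|) \quad\text{and}\quad \|u(t,x)-u(s,y)\|_2 \gtrsim \phi_2(|x-y|)
\]
separately. These follow from one-parameter lower bounds combined with a conditioning argument, as in \cite{HSWX20, LX23}.

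For the spatial increment at fixed $t$, Plancherel in space and the temporal covariance $\Lambda_{1,\nu}$ (whose Fourier transform is $c|\tau|^{\nu-1}$, with $\delta$ when $\nu=1$) give
\[
\|u(t,x)-u(t,y)\|_2^2 = c\int_{\R^d} |1-e^{i\xi\cdot(x-y)}|^2\, \Phi_t(\xi)\,|\xi|^{\ell-d}\,\d\xi ,
\]
where $\Phi_t(\xi) = \int_0^t\int_0^t e^{-(t-r)|\xi|^\alpha} e^{-(t-r')|\xi|^\alpha}\,\Lambda_{1,\nu}(r-r')\,\d r\,\d r'$.

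We estimate $\Phi_t(\xi) \asymp (1+|\xi|)^{-2\alpha H}$ uniformly for $t\in I$. For large $|\xi|$ this uses the scaling $r\mapsto r|\xi|^{-\alpha}$ and the fact that $t$ is bounded away from $0$. The integrand then behaves like $\min(|\xi|^2|h|^2,1)\,|\xi|^{\ell-d-2\alpha H}$ at high frequency, where $h=x-y$. In radial coordinates the radial integrand is $\rho^{\ell-1-2\alpha H}\min(\rho^2h^2,1)$, and the exponent $2\theta_2 = 2\alpha H-\ell$ determines the behavior:
\begin{itemize}
\item if $2\theta_2<2$, the integral is $\asymp |h|^{2\theta_2}$;
\item if $2\theta_2=2$, the region $1<\rho<1/|h|$ contributes $h^2\int_1^{1/|h|}\rho^{-1}\,\d\rho = h^2\log(1/|h|)$;
\item if $2\theta_2>2$, the integral is $\asymp h^2$.
\end{itemize}
The lower bounds come from restricting the $\xi$-integral to the annulus $|\xi|\asymp 1/|h|$ (or $1<|\xi|<1/|h|$ in the logarithmic case) and using $|1-e^{i\xi\cdot h}|^2\gtrsim |\xi\cdot h|^2$ on a cone around $h$. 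The low-frequency region $|\xi|\le1$ contributes $O(h^2)$, which is harmless in every case.

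For the temporal increment with $s<t$, we split
\[
u(t,x)-u(s,x) = \int_s^t\!\!\int G(t-r,x-z)\,W(\d r\,\d z) + \int_0^s\!\!\int [G(t-r,x-z)-G(s-r,x-z)]\,W(\d r\,\d z).
\]
After Fourier transform, the upper bound $|t-s|^{2\theta_1}$ follows by scaling $\xi \mapsto \xi|t-s|^{-1/\alpha}$, using the fractional-in-time covariance. The lower bound is the delicate point, since the two terms are correlated when $\nu<1$. Our first approach is to use the harmonizable representation in $(\tau,\xi)$ together with a test-function argument. Alternatively, we invoke the known results of \cite{HSWX20, LX23, CLX}, which cover exactly this setting.

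Finally, the H\"older claims follow from Kolmogorov's continuity theorem applied to Gaussian moments, using $\phi_2(r)\lesssim r^{\theta_2-\epsilon}$. The $C^1$ claim when $\theta_2>1$ comes from showing that the difference quotients form a Cauchy family in $L^2$, via the integrable weight $|\xi|^{2+\ell-d-2\alpha H}$, and then applying Kolmogorov to the derivative field, which is Gaussian with H\"older increments of order $(\theta_2-1)\wedge 1$.

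We expect the main obstacle to be the lower bound for the temporal increment, because of the correlation between the two terms in the split above.
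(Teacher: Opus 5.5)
The paper gives no argument for Lemma~\ref{lem:SPDE:reg}; it quotes the result from \cite{HSWX20, LX23, CLX}. Your proposal relies on those same references for the two delicate steps: the temporal lower bound under fractional-in-time noise, and the lower bound for mixed increments. On those points it stands where the paper does.

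Everything else you derive directly, and it is correct. Your bound $\Phi_t(\xi)\asymp(1+|\xi|)^{-2\alpha H}$ and the radial integrand $\rho^{-1-2\theta_2}\min(\rho^2h^2,1)$ are right, and they show exactly where the factor $\log(1/|x-y|)$ at $\theta_2=1$ comes from. The upper bounds are fine: the correlation between the two pieces of your time split does not matter there, by the triangle inequality. The Kolmogorov arguments are also fine. For the $C^1$ claim, add the standard step identifying the continuous version of the mean-square derivative with the pathwise derivative. One way is to use $u(t,x+h)-u(t,x)=\int_0^1\nabla u(t,x+rh)\cdot h\,\d r$ a.s.\ for rational $x,h$, then extend by continuity in $(x,h)$.

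If you want the proof to be self-contained, two points need attention.

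First, ``one-parameter lower bounds combined with a conditioning argument'' is not yet an argument. One-parameter bounds and the triangle inequality only give $\|u(t,x)-u(s,y)\|_2\ge c\,\phi_2(|x-y|)-C\phi_1(|t-s|)$ and its mirror image. These say nothing when $\phi_1(|t-s|)\asymp\phi_2(|x-y|)$. Moreover, an SLND-type conditioning bound is exactly what the paper says is open at $\theta_2=1$.

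Second, the obstacle you single out disappears in the space-time harmonizable representation \eqref{v}, with spectral weight $|\tau|^{1-2H}|\xi|^{\ell-d}$ in general. For $s,t\in I$, $\hat g_{t,x}-\hat g_{s,y}$ differs from $\big(e^{-i(\xi\cdot x+\tau t)}-e^{-i(\xi\cdot y+\tau s)}\big)/(i\tau-|\xi|^\alpha)$ by a term of modulus at most $e^{-(\min I)|\xi|^\alpha}\big(|\xi||x-y|+|\xi|^\alpha|t-s|\big)/|i\tau-|\xi|^\alpha|$.
\begin{itemize}
\item In the regime $\phi_1(|t-s|)\ge\phi_2(|x-y|)$, restrict to $|t-s|^{-1}\le|\tau|\le 2|t-s|^{-1}$ and $M\le|\xi|\le\epsilon|t-s|^{-1/\alpha}$; in this regime that range forces $|\xi||x-y|\le\epsilon$. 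For $M$ large the error is then pointwise negligible, and you get $\gtrsim|t-s|^{2H-\ell/\alpha}=\phi_1^2(|t-s|)$.
\item In the complementary regime, restrict to frequencies where $|\tau||t-s|$ is small compared with $|\xi\cdot(x-y)|$, with $|\xi|$ in the range of your spatial computation. This gives $\gtrsim\phi_2^2(|x-y|)$.
\end{itemize}
With this route you need neither a conditioning argument nor any control of the cross term in your time split.
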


Next, we discuss optimal regularities of the solution when $0<\theta_2 \le 1$.
The exact moduli of continuity for the solution is known when $0<\theta_2<1$ (see \cite{HSWX20, LX23}): In this case, for any fixed compact rectangle $I\times J \subset (0\,,\infty) \times \R^d$, the solution $u=\{u(t\,,x)\}_{(t,x)\in I\times J}$ satisfies SLND on $I\times J$, and for any fixed $t_0>0$, $x_0 \in \R^d$, there exist constants $K_1,K_2,K_3,K_4,K_5,K_6 \in (0\,,\infty)$ such that
\begin{gather*}
    \lim_{h\to0^+} \sup_{t,s\in I:0<|t-s|\le h}\frac{|u(t\,,x_0)-u(s\,,x_0)|}{|t-s|^{\theta_1}\sqrt{\log(1/|t-s|)}} = K_1 \quad \text{a.s.},\\
    \lim_{h\to0^+}\sup_{t\in I:0<|t-t_0|\le h} \frac{|u(t\,,x_0)-u(t_0\,,x_0)|}{|t-t_0|^{\theta_1}\sqrt{\log\log(1/|t-t_0|)}} = K_2 \quad \text{a.s.},\\
    \lim_{h\to0^+}\sup_{x,y\in J: 0<|x-y|\le h} \frac{|u(t_0\,,x)-u(t_0\,,y)|}{|x-y|^{\theta_2}\sqrt{\log(1/|x-y|)}} = K_3 \quad \text{a.s.},\\
    \lim_{h\to0^+} \sup_{x \in J: 0<|x-x_0|\le h} \frac{|u(t_0\,,x)-u(t_0\,,x_0)|}{|x-x_0|^{\theta_2}\sqrt{\log\log(1/|x-x_0|)}} = K_4 \quad \text{a.s.},\\
    \lim_{h\to0^+}\sup_{z,z'\in I\times J: 0<\phi(z-z')\le h} \frac{|u(z)-u(z')|}{\phi(z-z')\sqrt{\log(1/\phi(z-z'))}} = K_5 \quad \text{a.s.},\\
    \lim_{h\to0^+}\sup_{z\in I\times J: 0<\phi(z-z_0)\le h} \frac{|u(z)-u(z_0)|}{\phi(z-z_0)\sqrt{\log\log(1/\phi(z-z_0))}} = K_6 \quad \text{a.s.},
\end{gather*}
where $z_0 = (t_0\,,x_0)$.
The moduli of continuity in the last display can also be derived from our results thanks to the SLND property when $0<\theta_2<1$ (see Proposition \ref{prop:SLND}).

When $\theta_2=1$, whether $u$ satisfies SLND in $x$ or in $(t\,,x)$ is an open problem, and due to this, the exact uniform moduli of continuity in $x$ and in $(t\,,x)$ are also open \cite{HSWX20, H23, CLX}.
In what follows, we will tackle this problem in the particular case that $\alpha \in (0\,,2]$, $\ell=d=1$. 
In this case, 
\begin{align}\label{theta_2=1}
    \theta_2 = 1 \quad \Leftrightarrow \quad \alpha H=3/2.
\end{align}

\subsection{Localization}
From now on, we focus on the case that
\begin{align}\label{alphaH=3/2}
    \alpha \in (0\,,2], \quad \ell=d=1, \quad H \in [1/2\,,1), \quad \alpha H=3/2.
\end{align}
Note that, in this case, the ranges of $\alpha$ and $H$ become $\alpha \in (3/2\,,2]$ and $H \in (3/4\,,1)$.
Our goal is to solve the exact uniform modulus of continuity (Theorem \ref{th:um:SPDE}).
Due to the lack of SLND property for $\{ u(t_0\,,x) \}_{x \in J}$, we start by analyzing the localization property of the spatial increments of the solution \eqref{mild:sol}.

Fix $t>0$ and $x \in \R$.
For any $h>0$, consider the spatial increment 
\begin{align}\begin{split}\label{Delta:h:u}
    \Delta^h u &= \Delta^h u(t\,,x) := u(t\,,x+h)-u(t\,,x)\\
    &= \int_0^t \int_{\R^d} \left[ G(t-s\,,x+h-y) - G(t-s\,,x-y) \right] W(\d s\, \d y).
\end{split}\end{align}
Let $\delta \in (0\,,t)$, $\varepsilon>0$, and decompose the increment as $\Delta^h u = \Delta^h_A u + \Delta^h_B u$, 
where
\begin{align*}
    A = [t-\delta\,,t] \times [x-\varepsilon\,,x+\varepsilon], \quad 
    B = ([0\,,t] \times \R) \setminus A,
\end{align*}
and $\Delta^h_C u = \Delta^h u(t\,,x)$ denotes
\begin{align}\label{Delta:u}
    \Delta^h_C u(t\,,x) := \iint_{C} \left[ G(t-s\,,x+h-y) - G(t-s\,,x-y) \right] W(\d s\, \d y).
\end{align}
We wish to show that the increment localizes on region $A$ as $h\to 0^+$ under a suitable choice of $\varepsilon$ and $\delta$.
To this end, we will quantify the localization error 
\begin{align}\label{loc:error}
    \Delta^h_B u = \Delta^h u - \Delta^h_A u
\end{align}
with precise dependence on $\varepsilon$ and $\delta$.
In particular, we split the error into $\Delta^h_B u = \Delta^h_{B_1} u + \Delta^h_{B_2} u$, where $B = B_1 \cup B_2$ with
\begin{align*}
    B_1 = [0\,,t-\delta)\times \R \quad \text{and} \quad
    B_2 = [t-\delta\,,t] \times \{ y \in \R : |x-y| > \varepsilon\},
\end{align*}
and estimate the two parts in turn.

\begin{lemma}
Suppose $\alpha >0$, $H\in (1/2\,,1)$, $\ell=d=1$, and $\alpha H = 3/2$.
Fix $0<t_1<t_2$. Then there is $K>0$ such that
\[
    \|\Delta^h_{B_1} u(t\,,x)\|_2^2 \le K\left[ h^2 \log(1/\delta^{1/\alpha}) + h^2 + \delta^{2/\alpha} e^{-\delta/h^\alpha}\right]
\]
uniformly for all $t \in [t_1\,,t_2]$, $x\in \R$, $\delta \in (0\,,1\wedge t_1)$ and $h \in (0\,, \delta^{1/\alpha})$.
\end{lemma}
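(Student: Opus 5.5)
The plan is to pass to Fourier variables in space, reduce the temporal fractional covariance to an $L^{1/H}$-norm via the Hardy--Littlewood--Sobolev type inequality valid for $H\in(1/2\,,1)$, and then split the resulting frequency integral into four ranges, each of which produces one term of the claimed bound.

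\textbf{Step 1: reduce to a frequency integral.} Since $\dot W$ is white in space and fractional in time, the isometry for Wiener integrals together with Plancherel in $y$ gives
\[
    \|\Delta^h_{B_1}u(t\,,x)\|_2^2 = c_H\int_{\R}\d\xi\,|e^{ih\xi}-1|^2\int_0^{t-\delta}\!\!\int_0^{t-\delta} e^{-(t-s)|\xi|^\alpha}e^{-(t-r)|\xi|^\alpha}|s-r|^{2H-2}\,\d s\,\d r,
\]
using \eqref{G:hat}. Here the factor $e^{-i x\xi}$ has modulus one and the shift by $h$ produces $|e^{ih\xi}-1|^2$. Substituting $a=t-s$ and $b=t-r$ turns the time domain into $[\delta\,,t]^2$, and the integrand is nonnegative.

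\textbf{Step 2: bound the time integral, the main obstacle.} We need a bound on the inner double integral that is sharp in both regimes of $\lambda=|\xi|^\alpha$. I would use the Hardy--Littlewood--Sobolev (Mémin--Mishura--Valkeila) inequality
\[
    \iint f(a)f(b)|a-b|^{2H-2}\,\d a\,\d b\le C_H\|f\|_{L^{1/H}}^2,
\]
applied with $f=e^{-a\lambda}\mathbf 1_{[\delta,t]}$. This yields the bound $C\big(\int_\delta^\infty e^{-a\lambda/H}\d a\big)^{2H}= C(H/\lambda)^{2H}e^{-2\delta\lambda}$, and also the trivial bound $Ct^{2H}\le C t_2^{2H}$. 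Since $2\alpha H=3$, we get $\lambda^{-2H}=|\xi|^{-3}$, so the inner integral is $\lesssim \min\{1,\,|\xi|^{-3}e^{-2\delta|\xi|^\alpha}\}$ uniformly in $t\in[t_1\,,t_2]$. The exponential factor $e^{-2\delta|\xi|^\alpha}$ is what encodes the localization in time, so it must not be lost here.

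\textbf{Step 3: split the frequency integral.} Use $|e^{ih\xi}-1|^2\le \min\{h^2\xi^2,4\}$ and split $\R$ into four ranges.
\begin{itemize}
\item On $|\xi|\le1$, the trivial bound gives a contribution $\lesssim h^2$.
\item On $1\le|\xi|\le\delta^{-1/\alpha}$, the integrand is $\lesssim h^2|\xi|^{-1}$, which gives $h^2\log(\delta^{-1/\alpha})$.
\item On $\delta^{-1/\alpha}\le|\xi|\le 1/h$, which is nonempty because $h<\delta^{1/\alpha}$, the integrand is $\lesssim h^2|\xi|^{-1}e^{-2\delta|\xi|^\alpha}$. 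The substitution $\eta=\delta|\xi|^\alpha$ shows the integral is at most $h^2\alpha^{-1}\int_1^\infty\eta^{-1}e^{-2\eta}\,\d\eta\lesssim h^2$.
\item On $|\xi|\ge1/h$, the integrand is $\lesssim |\xi|^{-3}e^{-2\delta|\xi|^\alpha}\le e^{-\delta/h^\alpha}|\xi|^{-3}$, since $2\delta|\xi|^\alpha\ge \delta/h^\alpha$ there. Integrating $|\xi|^{-3}$ over $|\xi|\ge 1/h$ gives $\lesssim h^2e^{-\delta/h^\alpha}\le \delta^{2/\alpha}e^{-\delta/h^\alpha}$, again because $h<\delta^{1/\alpha}$.
\end{itemize}

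Summing the four pieces gives the claimed bound. The constant depends only on $H$, $\alpha$ and $t_2$ (through the trivial bound $t_2^{2H}$), so the estimate is uniform in $t\in[t_1\,,t_2]$, $x\in\R$, $\delta\in(0\,,1\wedge t_1)$ and $h\in(0\,,\delta^{1/\alpha})$, with $\delta<t_1$ ensuring that $B_1$ is well defined.
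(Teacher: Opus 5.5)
Your argument is correct, but it takes a different route from the paper.

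The paper applies Plancherel in both time and space. It uses that $|\cdot|^{2H-2}$ has Fourier transform proportional to $|\tau|^{1-2H}$, and that the space-time transform of $G\,{\bf 1}_{[\delta,t]}$ is explicit. After rescaling $\xi\mapsto\delta^{-1/\alpha}\xi$, it splits the $(\tau,\xi)$-plane into six regions, according to $|\xi|$ versus $1$ and $h^{-1}\delta^{1/\alpha}$, and $|\tau|$ versus $|\xi|^\alpha$. Each region is estimated with the bounds $1\wedge(h^2\delta^{-2/\alpha}|\xi|^2)$ and $\delta^{-2}\wedge|\tau|^{-2}\wedge|\xi|^{-2\alpha}$.

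You instead use Plancherel only in $y$. You then absorb the entire time integration, frequency by frequency, into one application of the Hardy--Littlewood--Sobolev (M\'emin--Mishura--Valkeila) inequality. This gives the pointwise factor $\min\{1,|\xi|^{-3}e^{-2\delta|\xi|^\alpha}\}$ and reduces everything to a one-dimensional integral over four ranges.

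Your route is shorter, and it is slightly sharper at high frequency. You get $h^2e^{-2\delta/h^\alpha}$, whereas the paper discards the factor $|\xi|^{-2H\alpha}$ in $I_5$ and $I_6$ and gets $\delta^{2/\alpha}e^{-\delta/h^\alpha}$. Both are in fact $\lesssim h^2$, since $h<\delta^{1/\alpha}$.

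The HLS step needs $H>1/2$, which is assumed. It could even be replaced by the exact scaling identity $\iint_{[\delta,\infty)^2}e^{-(a+b)\lambda}|a-b|^{2H-2}\,\mathrm{d}a\,\mathrm{d}b=C_He^{-2\delta\lambda}\lambda^{-2H}$.

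What the paper's spectral computation buys is consistency: it stays within the space-time Fourier (harmonizable) framework that is reused later in Lemma \ref{lem:v} and in the estimates for the approximate derivative $\mathscr{D}$.
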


\begin{proof}
    First, we use \eqref{W:cov}, \eqref{Delta:u} and $L^2$-isometry of Wiener integrals, and the change of variables $r_i = t-s_i$, $z = x-y$ to deduce that $\|\Delta^h_{B_1} u\|_2^2$ equals
    \begin{align*}
        &\!\int_0^{t-\delta} \d s_1 \int_0^{t-\delta} \d s_2 |s_1-s_2|^{2H-2}\int_\R \d y \, [G(t-s_1\,, x+h-y)-G(t-s_1\,, x-y)]\\
        & \hskip2.2in \times [G(t-s_2\,, x+h-y)-G(t-s_2\,, x-y)]\\
        &\!\!= \int_\delta^t \d r_1 \int_\delta^t \d r_2  |r_1-r_2|^{2H-2}\! \int_{\R} \d z [G(r_1\,,z-h)-G(r_1\,,z)] [G(r_2\,,z-h)-G(r_2\,,z)].
    \end{align*}
    The Fourier transform of $\Lambda_{1,\nu}$ is $\hat\Lambda_{1,\nu} = C_\nu\, \Lambda_{1,1-\nu}$ for some explicit constant $C_\nu$; see \cite[(12.10)]{Mattila}.
    By \eqref{G:hat}, the Fourier transform of $g(r\,,z) := G(r\,,z){\bf 1}_{[\delta,t]}(r)$ with respect to variable $(r\,,z)$ is
    \[
        \hat{g}(\tau\,,\xi) = \frac{e^{-\delta(i\tau+|\xi|^\alpha)} - e^{-t(i\tau+|\xi|^\alpha)}}{i\tau - |\xi|^\alpha} \qquad \forall (\tau\,,\xi) \in \R\times \R.
    \]
    Hence, by Plancherel's theorem, the change of variable $\xi \to \delta^{-1/\alpha}\xi$, and $2H=3/\alpha$,
    \begin{align*}
         &\|\Delta^h_{B_1} u\|_2^2
         \propto \int_\R \d \tau \int_\R \d \xi \, |e^{-ih\xi}-1|^2 |\tau|^{1-2H}e^{-2|\xi|^\alpha \delta} \frac{|1-e^{-(i\tau+|\xi|^\alpha)(t-\delta)}|^2}{|\tau|^2+|\xi|^{2\alpha}}\\
         &= \delta^{-1/\alpha+2H} \int_\R \d \tau \int_\R \d \xi\, |e^{-ih \delta^{-1/\alpha}\xi}-1|^2 |\tau|^{1-2H} e^{-2|\xi|^\alpha }\frac{|1-e^{-(i\tau+|\xi|^\alpha)\frac{t-\delta}{\delta}}|^2}{|\tau|^2+|\xi|^{2\alpha}}\\
         &= \delta^{2/\alpha} (I_1 + I_2 + I_3 + I_4 + I_5 + I_6),
    \end{align*}
    where $I_k$ corresponds to the integral over the region ($k$) specified as follows:
    \begin{enumerate}
        \item $|\xi|\le 1$ and $|\tau|\le |\xi|^\alpha$;
        \item $|\xi|\le 1$ and $|\tau| > |\xi|^\alpha$;
        \item $1<|\xi| \le h^{-1}\delta^{1/\alpha}$ and $|\tau| \le |\xi|^\alpha$;
        \item $1<|\xi| \le h^{-1}\delta^{1/\alpha}$ and $|\tau|>|\xi|^\alpha$;
        \item $|\xi|>h^{-1}\delta^{1/\alpha}$ and $|\tau| \le |\xi|^\alpha$;
        \item $|\xi|>h^{-1}\delta^{1/\alpha}$ and $|\tau|> |\xi|^\alpha$.
    \end{enumerate}
    We will use the elementary bounds
    \begin{align}\label{exp:bd:1}
        |e^{-ih \delta^{-1/\alpha}\xi}-1|^2 &\lesssim 1 \wedge (h^2\delta^{-2/\alpha}|\xi|^2),\\
        \frac{|1-e^{-(i\tau+|\xi|^\alpha)\frac{t-\delta}{\delta}}|^2}{|\tau|^2+|\xi|^{2\alpha}} &\lesssim \delta^{-2} \wedge |\tau|^{-2} \wedge |\xi|^{-2\alpha},
        \label{exp:bd:2}
    \end{align}
    valid uniformly for all $\tau, \xi \in \R$, $t\in [t_1\,,t_2]$, $\delta \in (0\,, 1\wedge t_1)$ and $h\in (0\,,\delta^{1/\alpha})$. 
    In particular, by \eqref{exp:bd:1}, \eqref{exp:bd:2} and $2H\alpha = 3$,
    \begin{align*}
        I_1 &\lesssim \int_0^{\delta^{1/\alpha}} \d \xi \int_0^{|\xi|^\alpha} \d \tau \, h^2 \delta^{-2/\alpha} |\xi|^2 |\tau|^{1-2H} e^{-2|\xi|^\alpha} \delta^{-2}\\
        & \quad + \int_{\delta^{1/\alpha}}^1 \d \xi \int_0^{|\xi|^\alpha} \d \tau \, h^2 \delta^{-2/\alpha}|\xi|^2 |\tau|^{1-2H} e^{-2|\xi|^\alpha} |\xi|^{-2\alpha}\\
        & \lesssim h^2 \delta^{-2-2/\alpha} \int_0^{\delta^{1/\alpha}} \d \xi \,|\xi|^{2+2\alpha-2H\alpha} + h^2 \delta^{-2/\alpha} \int_{\delta^{1/\alpha}}^1 \d \xi \, |\xi|^{2-2H\alpha}\\
        & \lesssim h^2 \delta^{-2/\alpha} + h^2 \delta^{-2/\alpha} \log(1/\delta^{1/\alpha}) \lesssim h^2 \delta^{-2/\alpha} \log(1/\delta^{1/\alpha}).
    \end{align*}
    For $I_2$, we use Fubini's theorem to write $\int_0^1 \d \xi \int_{|\xi|^\alpha}^\infty \d \tau (\cdots) = \int_0^\infty \d \tau \, \int_0^{|\tau|^{1/\alpha} \wedge 1} \d \xi (\cdots)$ and then use \eqref{exp:bd:1}, \eqref{exp:bd:2}, and $3/\alpha=2H$ to deduce that
    \begin{align*}
        I_2 
        & \lesssim \int_0^\delta \d \tau \int_0^{|\tau|^{1/\alpha}\wedge 1} \d \xi \, h^2 \delta^{-2/\alpha} |\xi|^2 |\tau|^{1-2H} e^{-2|\xi|^\alpha} \delta^{-2}\\
        & \quad + \int_\delta^\infty \d \tau \int_0^{|\tau|^{1/\alpha}\wedge 1} \d \xi \, h^2 \delta^{-2/\alpha} |\xi|^2 |\tau|^{1-2H} e^{-2|\xi|^\alpha} |\tau|^{-2}\\
        & \lesssim h^2 \delta^{-2-2/\alpha} \int_0^\delta \d \tau \, |\tau|^{1-2H} \int_0^{|\tau|^{1/\alpha}} \d \xi \, |\xi|^2 + h^2 \delta^{-2/\alpha} \int_0^1 \d \xi \int_{\delta \vee|\xi|^\alpha}^\infty \d \tau \, |\tau|^{-1-2H}\\
        & \lesssim h^2 \delta^{-2-2/\alpha} \int_0^\delta \d \tau \, |\tau| + h^2 \delta^{-2/\alpha} \int_0^1 \d \xi \, |\xi|^2 (\delta \vee|\xi|^\alpha)^{-2H}\\
        & \lesssim h^2 \delta^{-2/\alpha} + h^2 \delta^{-2/\alpha} \left[ \int_0^{\delta^{1/\alpha}} \d \xi \, |\xi|^2 \delta^{-2H} + \int_{\delta^{1/\alpha}}^1 \d \xi \, |\xi|^{2-2H\alpha}\right]\\
        & \lesssim h^2 \delta^{-2/\alpha} + h^2 \delta^{-2/\alpha} \left[ \delta^{3/\alpha-2H} + \log(1/\delta^{1/\alpha})\right] \lesssim h^2 \delta^{-2/\alpha} \log(1/\delta^{1/\alpha}).
    \end{align*}
    Next, by \eqref{exp:bd:1}, \eqref{exp:bd:2} and $2H\alpha = 3$,
    \begin{align*}
        I_3 &\lesssim \int_1^{h^{-1}\delta^{1/\alpha}} \d \xi \int_0^{|\xi|^\alpha} \d \tau \, h^2 \delta^{-2/\alpha} |\xi|^2 |\tau|^{1-2H} e^{-2|\xi|^\alpha} |\xi|^{-2\alpha}\\
        & \lesssim h^2 \delta^{-2/\alpha} \int_1^{h^{-1}\delta^{1/\alpha}} \d \xi \, e^{-2|\xi|^\alpha} |\xi|^{2-2H\alpha}\\
        & \le h^2 \delta^{-2/\alpha} \int_1^\infty \d \xi \, e^{-2|\xi|^\alpha} \lesssim h^2\delta^{-2/\alpha}.
    \end{align*}
    Similarly, we have
    \begin{align*}
        I_4 &\lesssim \int_1^{h^{-1}\delta^{1/\alpha}} \d \xi \int_{|\xi|^\alpha}^\infty \d \tau \, h^2 \delta^{-2/\alpha} |\xi|^2 |\tau|^{1-2H} e^{-2|\xi|^\alpha} |\tau|^{-2}\\
        & \lesssim h^2 \delta^{-2/\alpha} \int_1^{h^{-1}\delta^{1/\alpha}} \d \xi \, e^{-2|\xi|^\alpha} |\xi|^{2-2H\alpha} 
        \lesssim h^2 \delta^{-2/\alpha}.
    \end{align*}
    For $I_5$, we use \eqref{exp:bd:1}, \eqref{exp:bd:2}, $2H\alpha=3$ and $h^{-1}\delta^{1/\alpha} \ge 1$ to deduce that
    \begin{align*}
        I_5 &\lesssim \int_{h^{-1}\delta^{1/\alpha}}^\infty \d \xi \int_0^{|\xi|^\alpha} \d \tau \, |\tau|^{1-2H} e^{-2|\xi|^\alpha} |\xi|^{-2\alpha}\\
        & \lesssim \int_{h^{-1}\delta^{1/\alpha}}^\infty \d \xi \, e^{-2|\xi|^\alpha} |\xi|^{-2H\alpha}
        \le \int_{h^{-1}\delta^{1/\alpha}}^\infty \d \xi \, e^{-2|\xi|^\alpha}\\
        & \le e^{- (h^{-1}\delta^{1/\alpha})^\alpha} \int_0^\infty \d \xi \, e^{- |\xi|^\alpha} \lesssim e^{-\delta/h^\alpha}.
    \end{align*}
    Finally, we estimate $I_6$ in a similar way:
    \begin{align*}
        I_6 &\lesssim \int_{h^{-1}\delta^{1/\alpha}}^\infty \d \xi \int_{|\xi|^\alpha}^\infty \d \tau \, |\tau|^{1-2H} e^{-2|\xi|^\alpha} |\tau|^{-2}\\
        & \lesssim \int_{h^{-1}\delta^{1/\alpha}}^\infty \d \xi \, e^{-2|\xi|^\alpha} |\xi|^{-2H\alpha}
        \le \int_{h^{-1}\delta^{1/\alpha}}^\infty \d \xi \, e^{-2|\xi|^\alpha} \lesssim e^{-\delta/h^\alpha}.
    \end{align*}
    Combine all these estimates to finish the proof.
\end{proof}

\begin{lemma}
Suppose $\alpha \in (0\,,2]$, $H \in (1/2\,,1)$, and $\ell=d=1$.
Fix $0<t_1<t_2$. Then there is $K>0$ such that
\[
    \|\Delta^h_{B_2} u(t\,,x)\|_2^2 \le K h^2 \varepsilon^{-3-2\alpha} \delta^{2H+2}
\]
uniformly for all $t \in [t_1\,,t_2]$, $x \in \R$, $\delta \in (0\,,1 \wedge t_1)$, $\varepsilon>0$, $h \in [0\,,\delta^{1/\alpha} \wedge \tfrac{\varepsilon}{2})$.
\end{lemma}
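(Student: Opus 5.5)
We work directly in physical space, since the region $B_2$ is cut off in space and not in frequency. By the covariance \eqref{W:cov} with $\ell=d=1$ (white in space) and the $L^2$-isometry, after the substitutions $r_i=t-s_i$ and $z=x-y$ we get
\begin{align*}
    \|\Delta^h_{B_2}u\|_2^2 = \int_0^\delta \d r_1\int_0^\delta \d r_2\, |r_1-r_2|^{2H-2}\int_{|z|>\varepsilon}\d z\, D_h(r_1,z)D_h(r_2,z),
\end{align*}
where $D_h(r,z)=G(r\,,z-h)-G(r\,,z)$.
Cauchy--Schwarz in $z$ and then $ab\le (a^2+b^2)/2$ bound this by
\begin{align*}
    \int_0^\delta\d r_1\int_0^\delta \d r_2\,|r_1-r_2|^{2H-2} F(r_1),\qquad F(r):=\int_{|z|>\varepsilon}|D_h(r,z)|^2\,\d z .
\end{align*}
Since $\int_0^\delta |r_1-r_2|^{2H-2}\d r_2\lesssim \delta^{2H-1}$, it remains to show $\int_0^\delta F(r)\,\d r\lesssim h^2\varepsilon^{-3-2\alpha}\delta^{3}$. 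Multiplying by $\delta^{2H-1}$ then gives exactly $h^2\varepsilon^{-3-2\alpha}\delta^{2H+2}$.

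The key input is a pointwise bound on the spatial derivative of the stable kernel away from the origin. For $\alpha\in(0\,,2)$ we use the scaling $G(r,z)=r^{-1/\alpha}G(1,r^{-1/\alpha}z)$ together with the classical estimate $|\partial_z G(1,w)|\lesssim (1+|w|)^{-2-\alpha}$. This gives $|\partial_z G(r,z)|\lesssim r\,|z|^{-2-\alpha}$ for $|z|\ge r^{1/\alpha}$, and the estimate is uniform on $\{|z|\gtrsim r^{1/\alpha}\}$. For $\alpha=2$ the Gaussian kernel satisfies the stronger bound $|\partial_z G(r,z)|\lesssim r^{-3/2}|z|e^{-z^2/(8r)}\lesssim r|z|^{-4}$ when $|z|\ge r^{1/2}$, so the same form holds.

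Next comes the mean value theorem. Because $h<\varepsilon/2$ and $|z|>\varepsilon$, every point $w$ between $z-h$ and $z$ has $|w|\ge|z|/2>\varepsilon/2$. Because $r\le\delta<1$ and $h<\delta^{1/\alpha}$, we need $|w|\gtrsim r^{1/\alpha}$ to apply the kernel bound. This is where some care is needed. If $\varepsilon/2\ge \delta^{1/\alpha}$, then $|w|\ge r^{1/\alpha}$ and we get $|D_h(r,z)|\lesssim h\,r\,|z|^{-2-\alpha}$. Hence
\begin{align*}
    F(r)\lesssim h^2r^2\int_{|z|>\varepsilon}|z|^{-4-2\alpha}\d z\asymp h^2r^2\varepsilon^{-3-2\alpha},
\end{align*}
and integrating over $r\in[0,\delta]$ gives $h^2\varepsilon^{-3-2\alpha}\delta^3$, as required.

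The main obstacle is the complementary regime $\varepsilon\lesssim\delta^{1/\alpha}$, where the tail bound on $\partial_zG$ is unavailable. There I would use instead the global bound $F(r)\le\int_\R|D_h|^2\d z\lesssim h^2 r^{-3/\alpha}$, which follows from Plancherel, or alternatively the uniform bound $|\partial_zG(r,w)|\lesssim r^{-2/\alpha}$. I would then compare with the target: in this regime $\varepsilon^{-3-2\alpha}\delta^{3}\gtrsim \delta^{-3/\alpha-2+3}=\delta^{1-3/\alpha}$, so the claimed right-hand side is large. The remaining task is to check that $\int_0^\delta \min\{r^{-3/\alpha}, r^2\varepsilon^{-3-2\alpha}\}\,\d r$, with the split at $r=\varepsilon^\alpha$, is $\lesssim \varepsilon^{-3-2\alpha}\delta^3$. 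This reduces to an elementary power comparison, and I expect it to close using $\varepsilon^\alpha\lesssim\delta$ and $\alpha\le 2$. The constants are uniform in $t\in[t_1,t_2]$ and $x$, because the bound involves only $r\in[0,\delta]$ and translation-invariant quantities.
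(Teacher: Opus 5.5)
Your argument is correct and is essentially the paper's proof. Both use the $L^2$-isometry in physical space, the tail bound $|\partial_z G(r,z)|\lesssim r|z|^{-2-\alpha}$, and the mean value theorem with $|w|\ge|z|/2$. Your Cauchy--Schwarz/AM--GM step in $z$ just replaces the paper's direct bound $|D_h(r_1,z)D_h(r_2,z)|\lesssim h^2r_1r_2|z|^{-4-2\alpha}$ (followed by the scaling $r_i=\delta s_i$), and gives the same factor $\delta^{2H-1}\cdot\delta^{3}$.

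The only difference is that the regime you call the main obstacle is not an obstacle. Since $(1+x)^{-2-\alpha}\le x^{-2-\alpha}$, your own estimate $|\partial_zG(1,w)|\lesssim(1+|w|)^{-2-\alpha}$ already gives $|\partial_zG(r,z)|\lesssim r|z|^{-2-\alpha}$ for all $r>0$ and $z\neq 0$. For $\alpha=2$ the same follows from $w^5e^{-w^2/8}\lesssim 1$. This global form is how the paper uses the bound, so no case split is needed. Your fallback does close anyway, because $3/\alpha>1$ gives $\int_0^\delta\min\{r^{-3/\alpha},\,r^2\varepsilon^{-3-2\alpha}\}\,\mathrm{d}r\lesssim\varepsilon^{\alpha-3}$, and $\varepsilon^{\alpha-3}\le 2^{3\alpha}\varepsilon^{-3-2\alpha}\delta^{3}$ whenever $\varepsilon<2\delta^{1/\alpha}$.
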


\begin{proof}
    By \eqref{W:cov}, \eqref{Delta:u}, $L^2$-isometry of Wiener integrals, and the change of variables $r_i=t-s_i$, $z=x-y$, we see that $\|\Delta^h_{B_2} u\|_2^2$ equals
    \begin{align*}
        \int_0^\delta \d r_1 \int_0^\delta \d r_2 |r_1-r_2|^{2H-2} \int_{|z|>\varepsilon} \d y [G(r_1\,,z+h)-G(r_1\,,z)][G(r_2\,,z+h)-G(r_2\,,z)].
    \end{align*}
    When $\alpha=2$, it follows from $\sup_{z>0} z^5 e^{-z^2}<\infty$ that
    \[
        |\partial_y G(s\,,y)| = \frac{|y|/(2s)}{\sqrt{4\pi s}} \exp\left(-\frac{y^2}{4s}\right) \lesssim s |y|^{-4}
        \quad \forall s>0, y \in \R.
    \]
    When $\alpha \in (0\,,2)$, thanks to a gradient estimate \cite[Lemma 2.2]{CZ16}, we have
    \begin{align}\label{gradient:G}
        |\partial_y G(s\,,y)| \lesssim s |y|^{-2-\alpha} \quad \forall s > 0, y \in \R.
    \end{align}
    Thus \eqref{gradient:G} holds for $\alpha \in (0\,,2]$.
    Also, for $0 \le h < \varepsilon/2$ and $|z|>\varepsilon$ imply that
    \begin{align*}
        |z+h| \ge |z|-h > |z|-\varepsilon/2 > |z|-|z|/2 = |z|/2. 
    \end{align*}
    By the mean value theorem, for any $h \in (0\,,\varepsilon/2)$ and $|z|>\varepsilon$, there exists $h^* \in [0\,,h]$ such that
    \begin{align*}
        |G(r\,,z+h)-G(r\,,z)|
        \lesssim h r |z+h^*|^{-2-\alpha} \lesssim h r |z|^{-2-\alpha}.
    \end{align*}
    It follows that
    \begin{align*}
        \|\Delta^h_B u\|_2^2
        \lesssim \int_0^\delta \d r_1 \int_0^\delta \d r_2 |r_1-r_2|^{2H-2} \int_{|z|>\varepsilon} \frac{h^2 r_1r_2}{|z|^{-4-2\alpha}}.
    \end{align*}
    The change of variables $r_i = \delta s_i$ then yields the desired estimate.  
\end{proof}

Combining the above two lemmas, we obtain the following estimate for the localization error $\Delta^h_B u = \Delta^h_B u (t\,,x)$ introduced in \eqref{loc:error}.

\begin{proposition}\label{prop:loc}
    Suppose \eqref{alphaH=3/2}, and set
    \begin{align*}
        \delta = h^{\rho\alpha} \quad \text{and} \quad
        \varepsilon= \delta^{\frac{2H+2}{3+2\alpha}}
        = h^{\rho}.
    \end{align*}
    Fix $0<t_1<t_2$. Then, there exists $K > 0$ such that for any $\rho \in (0\,,1)$, there exists $r_\rho \in (0\,,1)$ such that
    \begin{align*}
        \|\Delta^h_B u(t\,,x)\|_2^2 \le K \rho\, h^2 \log(1/h)
    \end{align*}
    uniformly for all $t \in [t_1\,,t_2]$, $x \in \R$ and $h \in (0\,,r_\rho]$.
\end{proposition}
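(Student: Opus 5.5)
The plan is to combine the two preceding lemmas via the elementary inequality $\|\Delta^h_B u\|_2^2 \le 2\|\Delta^h_{B_1} u\|_2^2 + 2\|\Delta^h_{B_2} u\|_2^2$, which holds because $\Delta^h_B u = \Delta^h_{B_1} u + \Delta^h_{B_2} u$. We then plug in the choices $\delta = h^{\rho\alpha}$ and $\varepsilon = h^\rho$. First we check the consistency of these choices. Since $2H = 3/\alpha$, we have $(2H+2)/(3+2\alpha) = (3/\alpha+2)/(3+2\alpha) = 1/\alpha$, so $\varepsilon = \delta^{1/\alpha} = h^\rho$. Next we verify that the ranges required by both lemmas hold for $h$ small (depending on $\rho$). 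We need $\delta = h^{\rho\alpha} < 1\wedge t_1$. We need $h < \delta^{1/\alpha} = h^\rho$, which is true for $h<1$ since $\rho<1$. We also need $h < \varepsilon/2 = h^\rho/2$, which is true once $h^{1-\rho} < 1/2$. These conditions determine $r_\rho$.

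For the $B_1$ part, the first lemma gives
\[
\|\Delta^h_{B_1} u\|_2^2 \le K\big[h^2 \log(1/\delta^{1/\alpha}) + h^2 + \delta^{2/\alpha} e^{-\delta/h^\alpha}\big] = K\big[\rho h^2\log(1/h) + h^2 + h^{2\rho} e^{-h^{-(1-\rho)\alpha}}\big].
\]
The leading term is exactly $K\rho h^2\log(1/h)$, with a constant $K$ independent of $\rho$. The remaining terms must be absorbed into it for $h\le r_\rho$. The term $h^2$ is at most $\rho h^2\log(1/h)$ as soon as $\log(1/h)\ge 1/\rho$. The term $h^{2\rho}e^{-h^{-(1-\rho)\alpha}}$ decays faster than any power of $h$, so it is at most $\rho h^2\log(1/h)$ for $h$ small depending on $\rho$.

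For the $B_2$ part, the second lemma gives
\[
\|\Delta^h_{B_2} u\|_2^2 \le K h^2 \varepsilon^{-3-2\alpha}\delta^{2H+2} = K h^2 h^{-\rho(3+2\alpha)} h^{\rho\alpha(2H+2)}.
\]
Since $\alpha(2H+2) = 3+2\alpha$, the exponents cancel, and the bound is $K h^2$. This is the reason for the choice $\varepsilon=\delta^{(2H+2)/(3+2\alpha)}$. As before, $h^2 \le \rho h^2\log(1/h)$ once $h\le e^{-1/\rho}$.

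Collecting these, $\|\Delta^h_B u\|_2^2 \le C K\rho h^2\log(1/h)$ for a numerical constant $C$ and all $h\le r_\rho$. Uniformity in $t\in[t_1,t_2]$ and $x\in\R$ is inherited from the two lemmas. The main point requiring care is the bookkeeping: the constant in front of $\rho h^2\log(1/h)$ must not depend on $\rho$, and all $\rho$-dependence must go into $r_\rho$. Apart from that, the argument is a direct substitution.
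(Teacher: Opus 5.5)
Your proposal is correct and matches the paper, which obtains the proposition simply by combining the $B_1$ and $B_2$ lemmas with $\delta=h^{\rho\alpha}$ and $\varepsilon=h^\rho$. Your bookkeeping fills in the details the paper leaves implicit: the identities $(2H+2)/(3+2\alpha)=1/\alpha$, $\alpha(2H+2)=3+2\alpha$ and $\log(1/\delta^{1/\alpha})=\rho\log(1/h)$; the check of the lemmas' ranges; and absorbing the $h^2$ and super-polynomially small terms into $\rho h^2\log(1/h)$ by shrinking $r_\rho$, so that the constant in front stays independent of $\rho$.
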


\subsection{Proof of Theorem \ref{th:um:SPDE}}

We first prove \eqref{um:SPDE:x}.
By Lemma \ref{lem:SPDE:reg}, since $\theta_2=1$, there exist $C_1 \in (0\,,1)$ and $C_2>1$ such that
\begin{align}\label{u-u:x}
    C_1 \varphi_2(|x-y|) \le \|u(t_0\,,x)-u(t_0\,,y)\|_2 \le C_2 \varphi_2(|x-y|) \quad \forall x,y \in J.
\end{align}
where 
\[
    \varphi_2(r) = r \sqrt{\log_+(1/r)}.
\]
Clearly, $\varphi_2$ satisfies \eqref{um:phi}.
We now use our localization estimate to verify condition \eqref{um:cv}.
Recall the notation introduced in \eqref{Delta:h:u} and \eqref{Delta:u}.
Let $\rho\in (0\,,1)$.
For any $x,\tilde{x}\in J$ and $h, \tilde{h}>0$, we set 
\begin{align*}
    &A = [t_0-h^{\rho\alpha}\,, t_0] \times [x- h^{\rho}\,, x+  h^{\rho}], \quad B = ([0\,,t_0] \times \R)\setminus A,\\
    &\tilde{A} = [t_0-\tilde{h}^{\rho\alpha}\,, t_0] \times [\tilde{x}- \tilde{h}^{\rho}\,, \tilde{x}+ \tilde{h}^{\rho}], \quad \tilde{B} = ([0\,,t_0] \times \R)\setminus \tilde{A}.
\end{align*}
It follows that for any $r>0$, if $h\le r$, $\tilde{h} \le r$ and $|x-\tilde{x}| \ge 2r^{\rho}$, then $A \cap \tilde{A}=\varnothing$ and
\begin{align*}
    &\E[\Delta^h u(t_0\,,x) \Delta^{\tilde{h}} u(t_0\,,\tilde{x})]\\
    &= \E[(\Delta^h_A u(t_0\,,x) + \Delta^h_B u(t_0\,,x)) (\Delta^{\tilde{h}}_{\tilde{A}} u(t_0\,,\tilde{x}) + \Delta^{\tilde{h}}_{\tilde{B}}u(t_0\,,\tilde{x}))]\\
    &= 0 + \E[\Delta^{h}_{A} u(t_0\,,x) \Delta^{\tilde{h}}_{\tilde{B}} u(t_0\,,\tilde{x})] \\
    & \quad + \E[\Delta^{h}_{B} u(t_0\,,x) \Delta^{\tilde{h}}_{\tilde{A}} u(t_0\,,\tilde{x})] + \E[\Delta^{h}_{B} u(t_0\,,x) \Delta^{\tilde{h}}_{\tilde{B}} u(t_0\,,\tilde{x})],
\end{align*}
where in the last line, we have used $A \cap \tilde{A}=\varnothing$, which implies that $\Delta^h_A u(t_0\,,x)$ and $\Delta^{\tilde{h}}_{\tilde{A}} u(t_0\,,x)$ are independent.
Then, by the Cauchy-Schwarz inequality and the trivial bound $\|\Delta^h_A u(t_0\,,x)\|_2 \le \|\Delta^h u(t_0\,,x)\|_2$ (see \eqref{Delta:u}), we have
\begin{align*}
    |\E[\Delta^h u(t_0\,,x) &\Delta^{\tilde{h}} u(t_0\,,\tilde{x})]|
    \le \|\Delta^{h} u(t_0\,,x)\|_2 \|\Delta^{\tilde{h}}_{\tilde{B}} u(t_0\,,\tilde{x})\|_2\\
    &  + \|\Delta^{h}_{B} u(t_0\,,x)\|_2 \|\Delta^{\tilde{h}} u(t_0\,,\tilde{x})\|_2 + \|\Delta^{h}_{B} u(t_0\,,x)\|_2\|\Delta^{\tilde{h}}_{\tilde{B}} u(t_0\,,\tilde{x})\|_2.
\end{align*}
Thanks to Proposition \ref{prop:loc} and the upper bound in \eqref{u-u:x}, there exists $K_0>1$ such that for any $\rho \in (0\,,1)$, we can find $r_\rho \in (0\,,1)$ such that if $r \in (0\,,r_\rho]$, $h,\tilde{h} \in (0\,,r]$ and $|x-\tilde{x}|\ge 2r^\rho$, then
\begin{align*}
     &|\E[\Delta^h u(t_0\,,x) \Delta^{\tilde{h}} u(t_0\,,\tilde{x})]| \\
     & \le C_2 K_0 \sqrt{\rho}\, \varphi_2(h) \varphi_2(\tilde{h}) + C_2 K_0 \sqrt{\rho}\, \varphi_2(h) \varphi_2(\tilde{h}) + K_0^2 \rho \,\varphi_2(h) \varphi_2(\tilde{h})\\
     & \le (2C_2K_0 + K_0^2)\sqrt{\rho} \,\varphi_2(h) \varphi_2(\tilde{h}).
\end{align*}
For any $C_0 \in (0\,,1)$, 
we may choose and fix $\rho \in (0\,,1)$ such that 
\[
    \sqrt{\rho} \le \sqrt{1-C_0} \,C_1^2/(2C_2K_0+K_0^2) .
\]
Then, it follows from the lower bound in \eqref{u-u:x} that
\begin{align*}
     |\E[\Delta^h u(t_0\,,x) \Delta^{\tilde{h}} u(t_0\,,\tilde{x})]| \le \sqrt{1-C_0}\, \|\Delta^h u(t_0\,,x)\|_2 \|\Delta^{\tilde{h}} u(t_0\,,\tilde{x})\|_2
\end{align*}
uniformly for all $r \in (0\,,r_\rho]$, $h,\tilde{h} \in (0\,,r]$ and $x,\tilde{x} \in J$ with $|x-\tilde{x}| \ge 2r^\rho$.
This verifies condition \eqref{um:cv}.
Therefore, Theorem \ref{th:um} implies that \eqref{um:SPDE:x} holds for some constant $0<K_3<\infty$.

For \eqref{um:SPDE:t}, Lemma \ref{lem:SPDE:reg} implies that
\[
    \|u(t\,,x_0)-u(s\,,x_0)\|_2 \asymp \varphi_1(|t-s|) := |t-s|^{1/\alpha} \quad \forall t,s\in I,
\]
and $\varphi_1$ satisfies \eqref{um:phi}.
By \cite[Theorem 1.3]{CLX}, $\{u(t\,,x_0)\}_{t \in I}$ satisfies SLND on $I$, so condition \eqref{um:cv} holds with $\rho=1$.
Hence, \eqref{um:SPDE:t} follows from Theorem \ref{th:um}.

Next, we turn to the proof of  \eqref{um:SPDE:tx}.
Set 
\[ 
    \phi((t\,,x)\,,(s\,,y)) = \phi_1(|t-s|)\vee \phi_2(|x-y|),
\]
where $\phi_1(r) = r^{1/\alpha}$ and $\phi_2(r) = r \sqrt{\log_+(1/r)}$.
Lemma \ref{lem:SPDE:reg} implies that
\begin{align}\label{u-u:tx}
    \|u(t\,,x)-u(s\,,y)\|_2 \asymp \phi((t\,,x)\,,(s\,,y)) \quad \forall (t\,,x),(s\,,y) \in I\times J.
\end{align}
By Lemma \ref{lem:01}(i), \eqref{um:SPDE:tx} holds for some constant $K_5 \in [0\,,\infty]$.
It remains to show that $0<K_5<\infty$.
In fact, by \eqref{u-u:tx} and \eqref{um:SPDE:t},
\begin{align*}
    K_5=&\lim_{h\to0^+} \sup_{z,z'\in I\times J: 0<\phi(z,z') \le h} \frac{|u(z)-u(z')|}{\phi(z\,,z')\sqrt{\log(1/\phi(z\,,z'))}}\\
    &\ge \lim_{h\to0^+} \sup_{t,s\in I: 0<|t-s| \le h} \frac{|u(t\,,x_0)-u(s\,,x_0)|}{|t-s|^{1/\alpha}\sqrt{\log(1/|t-s|^{1/\alpha})}} = \alpha K_4>0.
\end{align*}
To show that $K_5 < \infty$, note that we have from definition \eqref{psi} that
\begin{align}\label{SPDE:psi}
    \psi(r) = \phi_1^{-1}(r)\phi_2^{-1}(r) \sim r^{\alpha + 1}(\log_+(1/r))^{-1/2} \quad \text{as $r \to 0^+$}
\end{align}
and
\begin{align}\label{SPDE:psi:inv}
    \psi^{-1}(r) \asymp r^{1/(\alpha+1)} (\log_+(1/r))^{1/(2\alpha+2)} \quad \text{as $r \to 0^+$.}
\end{align}
Then, as $h \to 0^+$,
\begin{align*}
    \int_0^h \frac{\psi^{-1}(r) \, \d r}{r \sqrt{\log(1/r)}}
    &\lesssim h^{1/(\alpha+1)} (\log(1/h))^{1/(2\alpha+2)-1/2} \\
    &\le h^{1/(\alpha+1)} (\log(1/h))^{1/(2\alpha+2)+1/2} \lesssim \psi^{-1}(h) \sqrt{\log(1/h)} = o(1).
\end{align*}
Hence, Assumptions \ref{a:sigma:aniso} and \ref{a:psi:aniso} are satisfied.
Theorem \ref{th:um:aniso} implies that
\[
    \lim_{h\to0^+} \sup_{z,z'\in I\times J: 0<\phi(z,z') \le h} \frac{|u(z)-u(z')|}{\phi(z\,,z')\sqrt{\log(1/\phi(z\,,z'))}} < \infty \quad \text{a.s.}
\]
This proves $K_5<\infty$, and hence \eqref{um:SPDE:tx}.
The proof of Theorem \ref{th:um:SPDE} is complete. \qed\\

The proof yields the following decorrelation result for the spatial increments.

\begin{proposition}\label{prop:corr}
	Suppose \eqref{alphaH=3/2}, and fix $0<t_1<t_2$.
	Then, for any $\varepsilon \in (0\,,1)$, there exist $\rho \in (0\,,1)$ such that $\rho = O(\varepsilon^2)$ and $r_\rho \in (0\,,1)$ such that
	\begin{align*}
		|\E[(u(t\,,x)-u(t\,,y))&(u(t\,,x') - u(t\,,y'))]| \\
		&\le \varepsilon \|u(t\,,x)-u(t\,,y)\|_2 \|u(t\,,x') - u(t\,,y')\|_2
	\end{align*}
	uniformly for all $t\in [t_1\,,t_2]$, $r \in (0\,,r_\rho]$ and $x,y,x',y' \in \R$ with $|x-y| \le r$, $|x'-y'| \le r$ and $|x-x'| \ge 2r^\rho$.
\end{proposition}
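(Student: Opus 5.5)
The argument repeats the decorrelation step from the proof of \eqref{um:SPDE:x}, now with $t$ ranging over $[t_1,t_2]$ and with the dependence of $\rho$ on $\varepsilon$ made explicit.

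\textbf{Reduction.} By symmetry of the covariance, we may assume $y=x+h$ and $y'=x'+\tilde h$ with $0<h,\tilde h\le r$. If an increment is negative, we swap its endpoints; this changes only the sign of the covariance and shifts the base point by at most $r$. We absorb that shift into the separation condition, using $|x-x'|\ge 2r^\rho$ together with $r\le r^\rho/2$ for small $r$. If this bookkeeping is too tight, we instead apply the argument with separation $3r^\rho$ and replace $\rho$ by a slightly smaller value. The case $h=0$ is trivial.

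\textbf{Localization.} Set $A=[t-h^{\rho\alpha},t]\times[x-h^\rho,x+h^\rho]$, with $\tilde A$ defined analogously for $(x',\tilde h)$, and let $B,\tilde B$ be their complements in $[0,t]\times\R$. Since $h,\tilde h\le r$ and $|x-x'|\ge 2r^\rho$, the spatial windows are disjoint. Hence $A\cap\tilde A=\varnothing$, and because the noise is white in space, $\Delta^h_Au$ and $\Delta^{\tilde h}_{\tilde A}u$ are uncorrelated. Expanding the covariance and applying Cauchy--Schwarz gives
\[
|\E[\Delta^h u\,\Delta^{\tilde h}u]|\le \|\Delta^hu\|_2\|\Delta^{\tilde h}_{\tilde B}u\|_2+\|\Delta^h_Bu\|_2\|\Delta^{\tilde h}u\|_2+\|\Delta^h_Bu\|_2\|\Delta^{\tilde h}_{\tilde B}u\|_2 .
\]
Here we also use that $\|\Delta^h_Au\|_2\le\|\Delta^hu\|_2$. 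That bound holds because the temporal covariance $|s_1-s_2|^{2H-2}$ is a positive kernel, so restricting to $A$ reduces the $L^2$ norm; more precisely, it follows from the $L^2$ isometry written in Fourier form in time.

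\textbf{Key bounds.} Proposition~\ref{prop:loc} gives $\|\Delta^h_Bu(t,x)\|_2^2\le K\rho\,h^2\log(1/h)$ uniformly in $t\in[t_1,t_2]$, $x\in\R$, and $h\le r_\rho$. Lemma~\ref{lem:SPDE:reg} gives $C_1\varphi_2(h)\le\|\Delta^hu(t,x)\|_2\le C_2\varphi_2(h)$. We need this uniformly over $t\in[t_1,t_2]$ and over all $x\in\R$. Spatial stationarity of $u(t,\cdot)$ reduces the $x$-uniformity to a compact $J$, and $t$-uniformity holds on the compact rectangle $[t_1,t_2]\times J$. Combining these, the right-hand side above is at most $(2C_2K_0+K_0^2)\sqrt\rho\,\varphi_2(h)\varphi_2(\tilde h)$, which in turn is at most $C_1^{-2}(2C_2K_0+K_0^2)\sqrt\rho\,\|\Delta^hu\|_2\|\Delta^{\tilde h}u\|_2$.

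\textbf{Choice of $\rho$.} Set $\sqrt\rho=\varepsilon C_1^2/(2C_2K_0+K_0^2)$, capped below $1$. This makes $\rho=O(\varepsilon^2)$, with an implicit constant independent of $\varepsilon$. We then take the corresponding $r_\rho$ from Proposition~\ref{prop:loc}, shrinking it if needed so that $r\le r^\rho/2$.

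\textbf{Main obstacle.} The main difficulty is making sure that the constant $K$ in Proposition~\ref{prop:loc} does not depend on $\rho$, since only $r_\rho$ is allowed to. This requires checking that the $B_2$ term $h^2\varepsilon^{-3-2\alpha}\delta^{2H+2}$ and the exponential term are negligible relative to $\rho h^2\log(1/h)$ for $h\le r_\rho$, while the leading $B_1$ term $h^2\log(h^{-\rho})=\rho h^2\log(1/h)$ carries the factor $\rho$ exactly.
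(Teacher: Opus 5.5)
Your proposal is correct and is essentially the paper's own argument. The paper reads Proposition~\ref{prop:corr} off the decorrelation step in the proof of \eqref{um:SPDE:x}, with the same ingredients you use: the disjoint windows $A,\tilde A$, the same three-term Cauchy--Schwarz bound, Proposition~\ref{prop:loc} (whose constant $K$ is indeed independent of $\rho$, as you check), and the same choice of $\sqrt\rho$ proportional to $\varepsilon C_1^2/(2C_2K_0+K_0^2)$.

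One justification needs fixing. Restricting in time is \emph{not} a contraction for the norm induced by $|s_1-s_2|^{2H-2}$ when the integrand changes sign. For example, with $B^H$ a fractional Brownian motion and $H$ near $1$, the variable $2B^H(1)-B^H(2)$ has smaller variance than $B^H(1)$. So neither positivity of the kernel nor the Fourier form of the isometry gives $\|\Delta^h_A u\|_2\le\|\Delta^h u\|_2$. The bound does hold here, for a different reason: for each fixed $y$, the map $s\mapsto G(t-s,x+h-y)-G(t-s,x-y)$ has constant sign, because $G(r,\cdot)$ is radially decreasing. In any case you can bypass it with $\|\Delta^h_A u\|_2\le\|\Delta^h u\|_2+\|\Delta^h_B u\|_2$, which only changes the constant.

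Finally, your endpoint swap is unnecessary. The two localization lemmas hold verbatim for $h<0$, since $G(r,\cdot)$ is even and $B_1,B_2$ are symmetric about $x$. So you can always center the window at $x$ and use $|x-x'|\ge 2r^\rho$ directly, without the fallback adjustment of $\rho$.
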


\subsection{Proof of Theorem \ref{th:lm:SPDE}}

We start with the proof of \eqref{lm:SPDE:x}.
Lemma \ref{lem:01}(ii) and \eqref{u-u:x} imply that \eqref{lm:SPDE:x} holds for some constant $K_6 \in [0\,,\infty]$.
Next, we show that $K_6 > 0$.
By \eqref{G:hat}, for any $t>0$ and $x \in \R$, the space-time Fourier transform of $g_{t,x}(r\,,z) = G(t-r\,,x-z) {\bf 1}_{[0,t](r)}$ is
\begin{align}\label{g:hat}
    \hat{g}_{t,x}(\tau\,,\xi) = e^{-i\xi x}\frac{e^{-i\tau t}-e^{-t|\xi|^\alpha}}{i\tau - |\xi|^\alpha} \quad \forall (\tau\,,\xi) \in \R \times \R.
\end{align}
Let $\tilde{W}=\tilde{W}_1 + i\tilde{W}_2$, where $\tilde{W}_1$ and $\tilde{W}_2$ are independent white noises on $\R \times \R$.
For any $t>0$ and $x \in \R$, define
\begin{align}\label{v}
    v(t\,,x) = \Re \int_\R \int_\R \hat{g}_{t,x}(\tau\,,\xi) |\tau|^{(1-2H)/2} \tilde{W}(\d \tau\, \d \xi).
\end{align}
The Gaussian random field $\{ v(t\,,x) \}_{t > 0, x \in \R}$ is known as the harmonizable representation of the solution \cite{Balan, DMX17, LX23} and has the same law as $\{ u(t\,,x) \}_{t>0, x \in \R}$ because they both have zero mean and the same covariance function by Parseval's identity.
For any Borel set $A \subset [0\,,\infty)$, define the truncated random field
\[
    v(A\,,t\,,x) = \Re \iint_{\sqrt{|\tau|^{2\theta_1}+ |\xi|^{2\theta_2}} \in A} \hat{g}_{t,x}(\tau\,,\xi) |\tau|^{(1-2H)/2} \tilde{W}(\d \tau\, \d \xi).
\]
If $A \cap B = \varnothing$, then $\{v(A\,,t\,,x)\}_{t>0,x\in \R}$ and $\{v(B\,,t\,,x)\}_{t>0,x\in \R}$ are independent.
\begin{lemma}\label{lem:v}
    Fix $0<t_1<t_2$.
    Then, under \eqref{alphaH=3/2}, 
    \[
        \|v([a\,,b]^c\,,t\,,x+h) - v([a\,,b]^c\,,t\,,x)\|_2 \lesssim h\sqrt{\log a} + b^{-1}
    \]
    uniformly for all $2 \le a < b \le \infty$, $t \in [t_1\,,t_2]$, $x \in \R$ and $h \in (0\,,1]$.
\end{lemma}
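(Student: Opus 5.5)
By the isometry of the complex white-noise integral, the squared $L^2$ norm of the truncated increment equals, up to a constant,
\[
\iint_{\sqrt{|\tau|^{2\theta_1}+|\xi|^{2\theta_2}}\notin[a,b]} |e^{-i\xi h}-1|^2\,|\tau|^{1-2H}\,\frac{|e^{-i\tau t}-e^{-t|\xi|^\alpha}|^2}{|\tau|^2+|\xi|^{2\alpha}}\,\d\tau\,\d\xi .
\]
This uses \eqref{g:hat}, the identity $\hat g_{t,x+h}=e^{-i\xi h}\hat g_{t,x}$, and the fact that $\|\Re Z\|_2\le \|Z\|_2$. Under \eqref{alphaH=3/2} we have $\theta_1=H-\tfrac1{2\alpha}=1/\alpha$ and $\theta_2=1$, so the truncation variable is $\sqrt{|\tau|^{2/\alpha}+|\xi|^2}\asymp |\tau|^{1/\alpha}\vee|\xi|$. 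I will split the complement $[a,b]^c$ into the low-frequency part $\{|\tau|^{1/\alpha}\vee|\xi|< a\}$ and the high-frequency part $\{|\tau|^{1/\alpha}\vee|\xi|> b\}$, up to harmless constants in the quasi-norm. Throughout I use $|e^{-i\xi h}-1|^2\lesssim 1\wedge h^2\xi^2$. For the last factor, $t\ge t_1$ gives $|e^{-i\tau t}-e^{-t|\xi|^\alpha}|^2/(\tau^2+|\xi|^{2\alpha})\lesssim 1\wedge(\tau^2+|\xi|^{2\alpha})^{-1}$, which is the analogue of \eqref{exp:bd:2} with $\delta$ replaced by a constant. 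The goal is to show the low-frequency part is $\lesssim h^2\log a$ and the high-frequency part is $\lesssim b^{-2}$; taking square roots then gives the claim.

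\emph{Low frequencies.} Bound $|e^{-i\xi h}-1|^2\le h^2\xi^2$. The $\tau$-integral is $\int|\tau|^{1-2H}(1\wedge(\tau^2+|\xi|^{2\alpha})^{-1})\,\d\tau\lesssim (1\vee|\xi|^{\alpha})^{-2H}$, since $2-2H>0$ makes it converge near $0$ and $2H>1$ makes it converge at infinity. By $2H\alpha=3$ this equals $(1\vee|\xi|)^{-3}$. The low-frequency part is therefore at most
\[
h^2\int_{|\xi|\lesssim a}\xi^2(1\vee|\xi|)^{-3}\,\d\xi\lesssim h^2(1+\log a)\lesssim h^2\log a,
\]
using $a\ge2$. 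Because this bound integrates over all $\tau$, it also covers the region where $|\xi|<a$ but $|\tau|$ is large, so the low-frequency part is handled completely.

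\emph{High frequencies.} Bound $|e^{-i\xi h}-1|^2\le 4$; no factor of $h$ is needed here. On $\{|\xi|>b\}$ the same $\tau$-integral gives $\int_{|\xi|>b}|\xi|^{-3}\,\d\xi\lesssim b^{-2}$. On $\{|\xi|\le b,\ |\tau|>b^\alpha\}$, I bound the last factor by $\tau^{-2}$ to get
\[
\int_{|\xi|\le b}\d\xi\int_{|\tau|>b^\alpha}|\tau|^{-1-2H}\,\d\tau\lesssim b\cdot b^{-2H\alpha}=b^{-2}.
\]
So the high-frequency part is $\lesssim b^{-2}$. When $b=\infty$ this region is empty and there is nothing to prove.

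The main obstacle is the exponent bookkeeping at the critical value $2H\alpha=3$. It makes the low-frequency integral exactly logarithmic, which is the source of $\sqrt{\log a}$, and it makes the high-frequency tail exactly $b^{-2}$. One also has to check that the bounds on $\hat g$ are uniform in $t\in[t_1,t_2]$, which is guaranteed by $t\ge t_1>0$, and that they are uniform in $x$, which holds because $|e^{-i\xi x}|=1$.
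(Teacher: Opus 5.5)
Your proof is correct and follows the paper's route. You use the same spectral expression for the squared norm, the same bounds $|e^{-i\xi h}-1|^2\lesssim 1\wedge h^2\xi^2$ and $t^2\wedge(\tau^2+|\xi|^{2\alpha})^{-1}$, and the same low/high-frequency split, with $2H\alpha=3$ producing $h^2\log a$ and $b^{-2}$. The only difference is in the low-frequency part: you integrate out $\tau$ for each fixed $\xi$ to get $(1\vee|\xi|)^{-3}$, whereas the paper substitutes $s=|\tau|^{1/\alpha}$ and uses polar coordinates. Your version is slightly cleaner, since it sidesteps the angular factor $|s|^{2\alpha-4}$ (integrable only because $\alpha>3/2$) that is implicit in the paper's bound.

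One small slip: the constant bound $|e^{-i\tau t}-e^{-t|\xi|^\alpha}|^2/(\tau^2+|\xi|^{2\alpha})\le t^2$ comes from $t\le t_2$, not from $t\ge t_1$. The lower bound $t\ge t_1$ plays no role in this lemma.
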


\begin{proof}
    Using \eqref{g:hat} and the elementary inequalities $|1-e^{-i\xi h}|^2 \lesssim 1 \wedge |h\xi|^2$ and
    \[
        \frac{|e^{-i\tau t}-e^{-t|\xi|^\alpha}|^2}{|\tau|^2+|\xi|^{2\alpha}}\lesssim t^2 \wedge |\tau|^{-2} \wedge |\xi|^{-2\alpha},
    \]
    we deduce that for $2 \le a < b \le \infty$, $t \in [t_1\,,t_2]$, $x \in \R$ and $h \in (0\,,1]$,
    \begin{align*}
        &\|v([a\,,b]^c\,,t\,,x+h) - v([a\,,b]^c\,,t\,,x)\|_2^2\\
        & = \iint_{\sqrt{|\tau|^{2/\alpha}+ |\xi|^{2}} \in [0,a] \cup [b, \infty)}  \frac{|1-e^{-i\xi h}|^2|e^{-i\tau t}-e^{-t|\xi|^\alpha}|^2}{|\tau|^2+|\xi|^{2\alpha}} |\tau|^{1-2H} \d \tau \, \d \xi\\
        & \lesssim \iint_{\sqrt{|\tau|^{2/\alpha}+ |\xi|^{2}} \le 1} h^2 |\xi|^2 |\tau|^{1-2H} \d \tau \, \d \xi + \iint_{1 \le \sqrt{|\tau|^{2/\alpha}+ |\xi|^{2}} \le a} h^2 \frac{|\xi|^2 |\tau|^{1-2H}}{|\tau|^2+|\xi|^{2\alpha}} \d \tau \, \d \xi\\
        & \quad + \iint_{\sqrt{|\tau|^{2/\alpha}+ |\xi|^{2}} \ge b} \frac{|\tau|^{1-2H}}{|\tau|^2+|\xi|^{2\alpha}} \d \tau \, \d \xi\\
        & \lesssim h^2 \int_0^1 \d \tau \, |\tau|^{1-2H} \int_0^1 \d \xi \, |\xi|^2 
        + h^2 \iint_{1 \le \sqrt{|s|^2 + |\xi|^2} \le a} \frac{|\xi|^2|s|^{\alpha(1-2H)}}{|s|^{2\alpha}+|\xi|^{2\alpha}} \, |s|^{\alpha-1} \d s \, \d \xi\\
        & \quad + \iint_{\sqrt{|\tau|^{2/\alpha}+ |\xi|^{2}} \ge b, |\tau|^{1/\alpha}\le |\xi|} \frac{|\tau|^{1-2H}\d \tau \, \d \xi }{|\tau|^2+|\xi|^{2\alpha}} +\iint_{\sqrt{|\tau|^{2/\alpha}+ |\xi|^{2}} \ge b, |\tau|^{1/\alpha}\ge |\xi|} \frac{|\tau|^{1-2H}\d \tau \, \d \xi}{|\tau|^2+|\xi|^{2\alpha}}.
    \end{align*}
    Then, by $2\alpha H = 3$, the change of variable $z = (s\,,\xi)$ and polar coordinates $|z| = r$,
    \begin{align*}
        \| v&([a\,,b]^c\,,t\,,x+h) - v([a\,,b]^c\,,t\,,x)\|_2^2
        \lesssim h^2 + h^2 \iint_{1\le |z| \le a} |z|^{-2}\, \d z\\
        & \quad + \int_b^\infty \d \xi \int_0^{|\xi|^\alpha} \d \tau \, |\tau|^{1-2H} |\xi|^{-2\alpha} + \int_{b^\alpha}^\infty \d \tau \int_0^{|\tau|^{1/\alpha}} \d \xi \, |\tau|^{-1-2H}\\
        & \lesssim h^2 + h^2 \int_1^a r^{-1} \d r + \int_b^\infty \d \xi \, |\xi|^{-2\alpha H} + \int_{b^\alpha}^\infty \d \tau \, |\tau|^{-1-2H+1/\alpha}\\
        & \lesssim h^2 + h^2 \log a + b^{1-2\alpha H} + b^{\alpha(-2H+1/\alpha)}\\
        & \lesssim h^2 \log a + b^{-2}.
    \end{align*}
    This completes the proof of Lemma \ref{lem:v}.
\end{proof}

Fix $t_0>0$ and $x_0 \in \R$.
Fix two numbers $\rho\in (0\,,1)$ and $\gamma>1$ whose values will be determined later.
For each $n \in \N$, let $x_n = x_0 + h_n$, where 
\[
    h_n = \exp(-e^{n^\gamma}).
\]
To simplify notation, write
\begin{align*}
    &\Delta^h u := u(t_0\,,x_0+h) - u(t_0\,,x_0),\\
    &\Delta^h v(A) := v(A\,,t_0\,,x_0+h) - v(A\,,t_0\,,x_0).
\end{align*}
Then, for any $m > n \ge 1$ and $2 \le a \le b \le c \le d \le \infty$, we have
\begin{align*}
    \E[\Delta^{h_n} u \, \Delta^{h_m} u]
    &= \E[(\Delta^{h_n} v([a\,,b])+ \Delta^{h_n} v([a\,,b]^c))(\Delta^{h_m}v([c\,,d])+\Delta^{h_m}v([c\,,d]^c))]\\
    &= 0+ \E[\Delta^{h_n} v([a\,,b])\Delta^{h_m}v([c\,,d]^c)] \\
    & \quad + \E[\Delta^{h_n} v([a\,,b]^c)\Delta^{h_m}v([c\,,d])] + \E[\Delta^{h_n} v([a\,,b]^c)\Delta^{h_m}v([c\,,d]^c)],
\end{align*}
where we have used the independence of $\Delta^{h_n} v([a\,,b]\,,x_0)$ and $\Delta^{h_m} v([c\,,d]\,,x_0)$.
By the Cauchy-Schwarz inequality and the bound $\|\Delta^h v(A)\|_2 \le \|\Delta^h v([0\,,\infty))\|_2 = \|\Delta^h u\|_2$, we have
\begin{align*}
    |\E[\Delta^{h_n} u \, \Delta^{h_m} &u]| \le \|\Delta^{h_n}u\|_2 \|\Delta^{h_m}v([c\,,d]^c)\|_2 \\
    &  + \|\Delta^{h_n} v([a\,,b]^c)\|_2 \|\Delta^{h_m} u\|_2 + \|\Delta^{h_n} v([a\,,b]^c)\|_2 \|\Delta^{h_m}v([c\,,d]^c)\|_2.
\end{align*}
Set
\[
    a = \exp(\rho e^{n^\gamma}), \quad
    b = \exp(e^{n^\gamma}), \quad
    c = \exp(\rho e^{m^\gamma}),\quad
    d= \infty.
\]
Now choose $\gamma = \gamma(\rho) = \log(1/\rho)>1$. 
This implies that for all $m>n\ge 1$,
\[
    e^{m^\gamma - n^\gamma} \ge e^{(n+1)^\gamma-n^\gamma} \ge e^{\gamma n^{\gamma-1}} \ge e^\gamma = \rho^{-1},
\]
and hence $b<c$.
Then, by Lemma \ref{lem:v} and \eqref{u-u:x}, there exists $n_0 \ge 1$ such that
\begin{align*}
    |\E[\Delta^{h_n} u \, \Delta^{h_m}u]|
    \lesssim \sqrt{\rho}\, \phi_2(h_n) \phi_2(h_m) \quad \forall m>n \ge n_0,
\end{align*}
and hence for any fixed $C_0\in (0\,,1)$, it is possible to choose $\rho \ll 1$ such that
\begin{align}\label{cov:Delta:u}
    |\E[\Delta^{h_n} u \, \Delta^{h_m}u]|
    \lesssim \sqrt{1-C_0}\, \|\Delta^{h_n} u\|_2 \|\Delta^{h_m}u\|_2 \quad \forall m>n \ge n_0.
\end{align}
Let $\xi_0,\xi_1,\dots$ be i.i.d.~standard Gaussian random variables.
For each $n\in \N$, define
\[
    Y_n = \frac{u(t_0\,,x_n)-u(t_0\,,x_0)}{\|u(t_0\,,x_n)-u(t_0\,,x_0)\|_2},\quad
    Z_n = (1-C_0)^{1/4} \xi_0 + (1-\sqrt{1-C_0})^{1/2} \xi_n.
\]
Then $\E[Y_n^2]= 1 = \E[Z_n^2]$, and \eqref{cov:Delta:u} implies that
\[
    \E[Y_nY_m] \le \sqrt{1-C_0} = \E[Z_nZ_m] \quad \forall m>n \ge n_0.
\]
By Slepian's lemma (Lemma \ref{lem:slepian}), for any fixed number $c>0$ and $M>m\ge n_0$,
\begin{align}
    \P\left( \bigcup_{n=m}^M \left\{ Y_n > c \sqrt{\ell(1/h_n)}\right\} \right) \ge \P\left( \bigcup_{n=m}^M \left\{ Z_n > c \sqrt{\ell(1/h_n)} \right\}\right),
\end{align}
where
\[
    \ell(x) := \log\log\log(x).
\]
Then, as in the proof of Theorem \ref{th:lm:aniso}, it follows that for any fixed $a>0$, $\delta>0$ and $M>m\ge n_0$,
\begin{align*}
    &\P(F_{m,M}) := \P\left\{ \sup_{x \in J: h_M\le |x-x_0| \le h_m} \frac{|u(t_0\,,x)-u(t_0\,,x_0)|}{\varphi_2(|x-x_0|) \sqrt{\ell(1/|x-x_0|)}} \ge a \right\}\\
    &\ge \P\left\{ \max_{m\le n \le M} \frac{u(t_0\,,x_n)-u(t_0\,,x_0)}{\|u(t_0\,,x_n)-u(t_0\,,x_0)\|_2\sqrt{\ell(1/h_n)}}\ge \frac{a}{C_1} \right\}\\
    & \ge \P\left( \bigcup_{n=m}^M \left\{ Y_n > \frac{a}{C_1} \sqrt{\ell(1/h_n)}\right\} \right) \ge \P\left( \bigcup_{n=m}^M \left\{ Z_n > \frac{a}{C_1} \sqrt{\ell(1/h_n)} \right\}\right)\\
    & \ge \P\left( \bigcup_{n=m}^M \left\{ \xi_n > \frac{(1+\delta)a \sqrt{\ell(1/h_n)}}{C_1(1-\sqrt{1-C_0})^{1/2}} \right\} \right) - \P\left\{ \xi_0 < - \frac{\delta a \sqrt{\ell(1/h_m)}}{C_1(1-C_0)^{1/4}}\right\}\\
    & \ge 1- \exp\left( - C\sum_{n=m}^M \frac{n^{-\frac{\gamma (1+\delta)^2a^2}{2C_1^2(1-\sqrt{1-C_0})}}}{\sqrt{\log(n^\gamma)}} \right) - \P\left\{ \xi_0 < - \frac{\delta a \sqrt{\log(m^\gamma)}}{C_1(1-C_0)^{1/4}}\right\}.
\end{align*}
Hence, for any fixed $\delta >0$ and $a>0$ with $a<C_1[2\gamma^{-1}(1-\sqrt{1-C_0})]^{1/2}/(1+\delta)$, we may let $M\to\infty$ first, then $m\to \infty$ to deduce that
\[
    \lim_{m\to\infty} \sup_{x \in J: |x-x_0|\le h_m} \frac{|u(t_0\,,x)-u(t_0\,,x_0)|}{\varphi_2(|x-x_0|)\sqrt{\ell(1/|x-x_0|)}} \ge a \quad \text{a.s.}
\]
Hence, we conclude that $K_6 \ge C_1[2\gamma^{-1}(1-\sqrt{1-C_0})]^{1/2}/(1+\delta)> 0$. 

Next, we prove that $K_6<\infty$.
For any $z \ge 0$, define
\[
    \mathscr{D}(z):=\Re \iint_{\sqrt{|\tau|^{2/\alpha}+|\xi|^2} \le e^z} (-i\xi) \hat{g}_{t_0,x_0}(\tau\,,\xi) |\tau|^{(1-2H)/2} \tilde{W}(\d \tau \, \d \xi),
\]
where $\hat{g}_{t,x}$ is as defined in \eqref{g:hat} and  $\tilde{W}=\tilde{W}_1 + i\tilde{W}_2$ as before.
Note that $\mathscr{D}(z)$ is a truncated version of the formal derivative of $v$ introduced in \eqref{v}.
Our idea is to linearize the spatial increment $u(t_0\,,x_0+h) - u(t_0\,,x_0)$ and approximate it by $\mathscr{D}(z) h$.
For any $n \in \N$ and $h \in [-1\,,1]$, define the approximation error
\[
    \mathscr{E}_n(h) := v(t_0\,,x_0+h) - v(t_0\,,x_0) - \mathscr{D}(n) h.
\]
\begin{lemma}
    Under \eqref{alphaH=3/2},
    \begin{align}\label{D:bd}
        &\|\mathscr{D}(z)-\mathscr{D}(z')\|_2 \lesssim \sqrt{|z-z'|}  \quad \text{and} \quad 
        \|\mathscr{D}(z)\|_2 \lesssim \sqrt{z}
    \end{align}
    uniformly for all $z,z'\ge 1$, and
    \begin{align}\label{E:bd}
        &\|\mathscr{E}_n(h)-\mathscr{E}_n(h')\|_2 \lesssim \varphi_2(|h-h'|) \quad \text{and} \quad 
        \|\mathscr{E}_n(h)\|_2 \lesssim e^{-n}
    \end{align}
    uniformly for all $n \ge 1$ and $|h|, |h'| \le e^{-n}$.
\end{lemma}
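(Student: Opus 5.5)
All four bounds will come from the $L^2$-isometry of the harmonizable representation \eqref{v}. Every quantity is a Wiener integral against $|\tau|^{(1-2H)/2}\tilde W$, so each second moment is an integral of a squared modulus. Throughout I use $2\alpha H=3$, the bound
\[
    \frac{|e^{-i\tau t_0}-e^{-t_0|\xi|^\alpha}|^2}{|\tau|^2+|\xi|^{2\alpha}}\lesssim 1\wedge|\tau|^{-2}\wedge|\xi|^{-2\alpha}
\]
from the proof of Lemma \ref{lem:v}, and the anisotropic polar change of variables $\tau=\pm|s|^\alpha$, $z=(s,\xi)$. Write $R:=\sqrt{|\tau|^{2/\alpha}+|\xi|^2}$. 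After this substitution, the computation in Lemma \ref{lem:v} shows that
\[
    \frac{|\xi|^2|\tau|^{1-2H}}{|\tau|^2+|\xi|^{2\alpha}}\,\d\tau\,\d\xi \lesssim |z|^{-2}\,\d z
\]
on $\{R\ge1\}$. This is the basic density estimate I will rely on.

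\textbf{Proof of \eqref{D:bd}.} For $1\le z'<z$, the difference $\mathscr{D}(z)-\mathscr{D}(z')$ is the integral of $(-i\xi)\hat g_{t_0,x_0}$ over the shell $e^{z'}<R\le e^z$. Its second moment is therefore at most
\[
    \iint_{e^{z'}<R\le e^z}\frac{|\xi|^2|\tau|^{1-2H}}{|\tau|^2+|\xi|^{2\alpha}}\,\d\tau\,\d\xi\lesssim\int_{e^{z'}}^{e^z}r^{-1}\,\d r=z-z'.
\]
For the second bound, $\|\mathscr{D}(z)\|_2^2$ splits into the region $R\le1$, which contributes $O(1)$ exactly as in the first term of Lemma \ref{lem:v}, and the shell $1\le R\le e^z$, which contributes $O(z)$. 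Hence $\|\mathscr{D}(z)\|_2\lesssim\sqrt z$ for $z\ge1$.

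\textbf{Proof of \eqref{E:bd}.} I split the frequency domain at $R=e^n$. On $\{R\le e^n\}$, $\mathscr{E}_n(h)$ has integrand $\hat g\,(e^{-i\xi h}-1+i\xi h)$, and $|e^{-i\xi h}-1+i\xi h|\lesssim (h\xi)^2$. On $\{R>e^n\}$ the integrand is simply $\hat g\,(e^{-i\xi h}-1)$, with $|e^{-i\xi h}-1|\lesssim1$.

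For the size bound, the low-frequency part contributes at most
\[
    h^4\int_1^{e^n}r^{-1}\cdot r^2\,\d r\lesssim h^4e^{2n}\le e^{-2n}
\]
when $|h|\le e^{-n}$, plus $O(h^4)$ from $R\le1$. The high-frequency part is exactly the tail estimate in Lemma \ref{lem:v} with $b=e^n$, giving $O(e^{-2n})$. Together, $\|\mathscr{E}_n(h)\|_2\lesssim e^{-n}$.

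For the increment bound, $\mathscr{E}_n(h)-\mathscr{E}_n(h')$ equals
\[
    u(t_0\,,x_0+h)-u(t_0\,,x_0+h')-\mathscr{D}(n)(h-h')
\]
in law. So it suffices to show $\|\mathscr{D}(n)\|_2|h-h'|\lesssim\varphi_2(|h-h'|)$ and then use \eqref{u-u:x}. By \eqref{D:bd}, $\|\mathscr{D}(n)\|_2|h-h'|\lesssim\sqrt n\,|h-h'|$. Since $|h-h'|\le 2e^{-n}$, we have $\sqrt n\lesssim\sqrt{\log(1/|h-h'|)}$, which gives the claim.

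The only delicate point is checking that the density bound $|z|^{-2}$ holds uniformly near the axes after the anisotropic substitution, including the region $|\tau|\ll|\xi|^\alpha$ where $|\tau|^{1-2H}$ is singular. This is already handled in the computation of Lemma \ref{lem:v}, and I would reuse that computation verbatim.
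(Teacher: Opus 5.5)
Your proposal is correct and follows the paper's proof essentially step for step: the isometry, the shell integral $\int r^{-1}\,\d r$ for $\mathscr{D}$, the Taylor split at $R=e^n$ for $\mathscr{E}_n$, and the triangle inequality with \eqref{u-u:x} and $\sqrt n\lesssim\sqrt{\log(1/|h-h'|)}$ for the increment. For the low-frequency part of $\mathscr{E}_n$, the paper instead uses the pointwise bound $h^4|\xi|^{4-2\alpha}|\tau|^{1-2H}$ on $[0,e^{\alpha n}]\times[0,e^n]$, which gives $h^4e^{2n}$ since $\alpha\le 2$.

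One correction is needed at the point you flagged. After $|\tau|=|s|^\alpha$, the density is, up to constants, $|\xi|^2|s|^{2\alpha-4}/(|s|^{2\alpha}+|\xi|^{2\alpha})$. This is \emph{not} $\lesssim|z|^{-2}$ pointwise near $s=0$ when $\alpha<2$. The paper's Lemma~\ref{lem:v} takes the same shortcut, so citing it does not settle the issue. What does hold is the following: the density is homogeneous of degree $-2$, and its angular factor in the polar angle $\theta$ is $\asymp|\cos\theta|^{2\alpha-4}$ near $\theta=\pm\pi/2$. That factor is integrable because $\alpha>3/2$ (i.e.\ $H<1$). Hence the annular bound $\lesssim\log(r_2/r_1)$, which is all you actually use, is valid.
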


\begin{proof}
    For $1\le z \le z'$, by the change of variables $s = \tau^{1/\alpha}$, $z = (s\,,\xi)$, followed by polar coordinates $|z| = r$, and $2\alpha H = 3$,
    \begin{align*}
        &\|\mathscr{D}(z)-\mathscr{D}(z')\|_2^2\\
        &= \iint_{e^z \le \sqrt{|\tau|^{2/\alpha}+|\xi|^2}\le e^{z'}}  \frac{|\xi|^2|e^{-i\tau t_0}-e^{-t_0|\xi|^\alpha}|^2}{|\tau|^2 + |\xi|^{2\alpha}} |\tau|^{1-2H} \d \tau \, \d \xi\\
        & \lesssim \iint_{e^z \le \sqrt{|s|^{2}+|\xi|^2}\le e^{z'}}  \frac{|\xi|^2}{|s|^{2\alpha} + |\xi|^{2\alpha}} |s|^{\alpha(1-2H)} s^{\alpha-1}\d s \, \d \xi\\
        & \lesssim \iint_{e^z \le |z| \le e^{z'}} \frac{|z|^2}{|z|^{2\alpha}} |z|^{2\alpha-4}  \d z \lesssim \int_{e^z}^{e^{z'}} r^{-1} \d r = z'-z.
    \end{align*}
    Also, for all $z \ge 1$, we have
    \begin{align*}
        \|\mathscr{D}(z)\|_2^2
        & = \|\mathscr{D}(z)-\mathscr{D}(1)\|_2^2 + \iint_{ \sqrt{|\tau|^{2/\alpha}+|\xi|^2}\le e}  \frac{|\xi|^2|e^{-i\tau t_0}-e^{-t_0|\xi|^\alpha}|^2}{|\tau|^2 + |\xi|^{2\alpha}} |\tau|^{1-2H} \d \tau \, \d \xi\\
        & \lesssim (z-1) + \int_0^e \d \tau \int_0^e \d \xi \, |\xi|^2 |\tau|^{1-2H} \lesssim z.
    \end{align*}
    This proves \eqref{D:bd}.
    Next, for $n \ge 1$ and $|h|,|h'|\le e^{-n}$, by \eqref{u-u:x}, \eqref{D:bd} and $\log(1/|h-h'|) \ge \log e^n = n$, we have
    \begin{align*}
        \|\mathscr{E}_n(h)-\mathscr{E}_n(h')\|_2
        & \le \|v(t_0\,,x_0+h) - v(t_0\,,x_0+h')\|_2 + \|\mathscr{D}(n)\|_2 |h-h'|\\
        & \lesssim |h-h'| \sqrt{\log(1/|h-h'|)} + \sqrt{n}\, |h-h'|
        \le 2 \varphi_2(|h-h'|).
    \end{align*}
    Finally, by Taylor expansion, for $n\ge 1$, $|h|,|h'|\le e^{-n}$, we have
    \begin{align*}
        &\|\mathscr{E}_n(h)\|_2^2\\
        & \lesssim  \int_{\sqrt{|\tau|^{2/\alpha}+|\xi|^2} \le e^n} \left|e^{-i\xi(x_0+h)} - e^{-i\xi x_0} - (-i\xi) h e^{-i\xi x_0}\right|^2 \frac{|e^{-i\tau t_0}-e^{-t_0|\xi|^\alpha}|^2}{|\tau|^2+|\xi|^{2\alpha}} |\tau|^{1-2H} \d \tau \, \d \xi\\
        & \quad + \int_{\sqrt{|\tau|^{2/\alpha}+|\xi|^2} \ge e^n} \left|e^{-i\xi(x_0+h)}-e^{-i\xi x_0}\right|^2 \frac{|e^{-i\tau t_0}-e^{-t_0|\xi|^\alpha}|^2}{|\tau|^2+|\xi|^{2\alpha}} |\tau|^{1-2H} \d \tau \, \d \xi\\
        & \lesssim \int_{\sqrt{|\tau|^{2/\alpha}+|\xi|^2} \le e^n} |h|^4 |\xi|^4 \frac{|\tau|^{1-2H}}{|\tau|^2+|\xi|^{2\alpha}}  \d \tau \, \d \xi + \int_{\sqrt{|\tau|^{2/\alpha}+|\xi|^2} \ge e^n}  \frac{|\tau|^{1-2H}}{|\tau|^2+|\xi|^{2\alpha}}  \d \tau \, \d \xi\\
        & \lesssim |h|^4 \int_0^{e^{\alpha n}} \d \tau \int_0^{e^n} \d \xi \, |\xi|^{4-2\alpha} |\tau|^{1-2H}\\
        & \quad + \int_{\sqrt{|\tau|^{2/\alpha}+|\xi|^2} \ge e^n} \left[{\bf 1}_{\{|\tau|^{1/\alpha}\le |\xi|} + {\bf 1}_{\{|\tau|^{1/\alpha}>|\xi|\}}\right] \frac{|\tau|^{1-2H}}{|\tau|^2+|\xi|^{2\alpha}}  \d \tau \, \d \xi\\
        & \lesssim |h|^4 e^{2n} + \int_{e^n}^\infty \d \xi \int_0^{|\xi|^\alpha} \d \tau \, |\tau|^{1-2H} |\xi|^{-2\alpha} + \int_{e^{\alpha n}}^\infty \d \tau \int_0^{|\tau|^{1/\alpha}} \d \xi \, |\tau|^{-1-2H}\\
        & \lesssim e^{-2n}.
    \end{align*}
    This completes the proof of \eqref{E:bd}.
\end{proof}

\begin{lemma}\label{lem:lil:D:E}
    Under \eqref{alphaH=3/2}, there exists a constant $C\in (0\,,\infty)$ such that
    \begin{gather}\label{lil:D}
        \limsup_{n\to\infty} \frac{|\mathscr{D}(n)|}{\sqrt{n \log\log n}} \le C \quad \text{a.s. and}\\
        \limsup_{n\to\infty} \sup_{h \in [e^{-n-1},e^{-n}]} \frac{|\mathscr{E}_n(h)|}{h \sqrt{\log(1/h) \log\log\log(1/h)}} = 0 \quad \text{a.s.}\label{lil:E}
    \end{gather}
\end{lemma}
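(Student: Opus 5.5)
For \eqref{lil:D}, I will exploit that $z\mapsto \mathscr{D}(z)$ has independent increments. The spectral regions $\{e^{z_{k}}\le \sqrt{|\tau|^{2/\alpha}+|\xi|^2}< e^{z_{k+1}}\}$ are disjoint, so the real parts of the corresponding stochastic integrals against $\tilde W$ are independent. Hence $\{\mathscr{D}(z)-\mathscr{D}(1)\}_{z\ge1}$ is a centered Gaussian process with independent increments, i.e. a time-changed Brownian motion $B(V(z))$. By the proof of \eqref{D:bd}, its variance satisfies $V(z)\lesssim z$. The classical LIL for Brownian motion, $\limsup_{s\to\infty}|B(s)|/\sqrt{2s\log\log s}=1$, then gives the bound along $V(n)$, where $V$ is non-decreasing.

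If $V(n)\to\infty$, I use $\sqrt{V(n)\log\log V(n)}\lesssim \sqrt{n\log\log n}$, which follows from monotonicity of $s\mapsto s\log\log s$. If $V$ stays bounded, $\mathscr{D}(n)$ converges a.s. and the bound is trivial. Alternatively, I can avoid the time change. Take a Gaussian tail bound plus Borel--Cantelli along $n_k=2^k$, and control $\max_{n_k\le n\le n_{k+1}}|\mathscr{D}(n)-\mathscr{D}(n_k)|$ by L\'evy's/Ottaviani's maximal inequality for independent increments. Either way \eqref{lil:D} follows, with $C$ absolute.

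For \eqref{lil:E}, I fix $n$ and study the Gaussian process $h\mapsto\mathscr{E}_n(h)$ on $S_n=[e^{-n-1},e^{-n}]$. By \eqref{E:bd}, $\sup_{h\in S_n}\|\mathscr{E}_n(h)\|_2\lesssim e^{-n}$. Its canonical metric is bounded by $\varphi_2(|h-h'|)$. The covering number at radius $\varepsilon$ is then $\lesssim e^{-n}/\varphi_2^{-1}(\varepsilon)$ for $\varepsilon\lesssim \varphi_2(e^{-n})\asymp e^{-n}\sqrt n$. The key point is that the variance bound $e^{-n}$ is smaller than the natural scale $\varphi_2(e^{-n})$ by a factor $\sqrt n$.

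Dudley's bound gives $\E\sup_{S_n}|\mathscr{E}_n|\lesssim \int_0^{c e^{-n}}\sqrt{\log(e^{-n}/\varphi_2^{-1}(\varepsilon))}\,\d\varepsilon$. The covering-number formula applies on $(0,\varphi_2(e^{-n})]$, and the Dudley integral only runs over $(0,\operatorname{diam}(S_n)]$, where the diameter is at most $2\sup\|\mathscr{E}_n\|_2\lesssim e^{-n}$. For $\varepsilon\le ce^{-n}$, I have $\varphi_2^{-1}(\varepsilon)\gtrsim \varepsilon/\sqrt{\log(1/\varepsilon)}\approx \varepsilon/\sqrt n$. So the logarithm is $\lesssim \log(e^{-n}\sqrt n/\varepsilon)$, and substituting $\varepsilon=e^{-n}u$ gives $\E\sup_{S_n}|\mathscr{E}_n|\lesssim e^{-n}\sqrt{\log n}$.

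Borell's inequality, with $\sigma_n\lesssim e^{-n}$, then gives
\[
\P\Big\{\sup_{S_n}|\mathscr{E}_n|>C'e^{-n}\sqrt{\log n}\Big\}\le e^{-c\log n\cdot C'^2}.
\]
This is summable for $C'$ large, so by Borel--Cantelli, a.s. for large $n$,
\[
\sup_{S_n}|\mathscr{E}_n|\lesssim e^{-n}\sqrt{\log n}.
\]
On $S_n$, the denominator is $h\sqrt{\log(1/h)\log\log\log(1/h)}\asymp e^{-n}\sqrt{n\log\log n}$. The ratio is therefore $\lesssim \sqrt{\log n/(n\log\log n)}\to0$, which proves \eqref{lil:E}.

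The main obstacle is the entropy computation. I have to make sure the gain from $\|\mathscr{E}_n(h)\|_2\lesssim e^{-n}$ in \eqref{E:bd} is not lost in the chaining, i.e. that Dudley's integral is truncated at the true diameter $\lesssim e^{-n}$ rather than at $\varphi_2(e^{-n})$. Even a cruder bound $e^{-n}\sqrt{n}$ would only give a ratio $\lesssim 1/\sqrt{\log\log n}\to0$, which still suffices. So there is slack, and I will use the cruder estimate if the fine one becomes delicate.
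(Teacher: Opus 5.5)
Your proof is correct. For \eqref{lil:E} it is essentially the paper's argument. The paper also bounds the canonical metric by $\varphi_2$ via \eqref{E:bd} and applies Dudley's bound on $[e^{-n-1},e^{-n}]$ with the entropy integral cut at the true diameter $\lesssim e^{-n}$, giving $\E\sup|\mathscr{E}_n|\lesssim e^{-n}\sqrt{\log n}$. It then concludes with Borell's inequality and Borel--Cantelli. The paper's threshold is $\varepsilon e^{-n}\sqrt{n\log\log n}$ with $\varepsilon>0$ arbitrary, while yours is $C'e^{-n}\sqrt{\log n}$; both work.

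For \eqref{lil:D} you take a different route. The paper uses no independence there. It applies Dudley's theorem on the blocks $z\in[e^{k-1},e^k]$, using only the increment bound $\|\mathscr{D}(z)-\mathscr{D}(z')\|_2\lesssim\sqrt{|z-z'|}$ from \eqref{D:bd}, to get $\E\sup_{[e^{k-1},e^k]}|\mathscr{D}|\lesssim e^{k/2}$. It then finishes with Borell's inequality, Borel--Cantelli and monotonicity.

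You instead use that the spectral annuli are disjoint. So $z\mapsto\mathscr{D}(z)$ has independent Gaussian increments, and $(\mathscr{D}(n)-\mathscr{D}(1))_{n}$ has the law of $(B(V(n)))_{n}$. The classical LIL, or Ottaviani/L\'evy along $2^k$, then finishes the argument.

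What each route buys:
\begin{itemize}
\item Your route avoids chaining entirely. It gives an explicit constant $C=\sqrt{2K}$, where $V(n)\le Kn$. If one also shows $V(z)\sim\kappa z$ (plausible, since the integrand in the proof of \eqref{D:bd} is homogeneous of degree $-2$ after the substitution), it would even give the exact value $\sqrt{2\kappa}$ of the $\limsup$.
\item The paper's route needs only the second-moment bound \eqref{D:bd}. It would survive without the independent-increment structure of the harmonizable representation.
\end{itemize}

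A minor point: your $C$ is not absolute. The constant $K$ depends on the implicit constant in \eqref{D:bd}, i.e.\ on $\alpha$, $H$ and $t_0$.
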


\begin{proof}
    By \eqref{D:bd} and Dudley's theorem \cite{Dudley} (see also \cite[Theorem 1.3.3]{AT} or \cite[Theorem 6.1.2]{MR}), there exist constants $c_1, c_2>0$ such that for all large $k$,
    \begin{align*}
        \E\sup_{z \in [e^{k-1},e^k]}|\mathscr{D}(z)| \lesssim \int_0^{c_1 e^{k/2}} \sqrt{\log \left(\frac{c_2 e^k}{r^2}\right)}\, \d r \lesssim e^{k/2}.
    \end{align*}
    Let $C>0$.
    The preceding and Borell's inequality \cite{Borell} (see also \cite[Theorem 2.1.1]{AT}) imply that for $k$ large,
    \begin{align*}
        &\P\left\{ \sup_{z \in [e^{k-1},e^k]}|\mathscr{D}(z)| > C \sqrt{e^k \log\log e^k} \right\}\\
        &\le \P\left\{ \sup_{z \in [e^{k-1},e^k]}|\mathscr{D}(z)| - \E\sup_{z \in [e^{k-1},e^k]}|\mathscr{D}(z)| > \tfrac12 C \sqrt{e^k \log k} \right\}\\
        &\le 2 \exp\left( - \frac{4^{-1} C^2 e^k \log k}{2 \sup_{z \in [e^{k-1},e^k]}\|\mathscr{D}(z)\|_2^2} \right),
    \end{align*}
    which is summable for a large enough constant $C$, thanks to \eqref{D:bd}.
    An application of the Borel-Cantelli lemma and a simple monotonicity argument then yield \eqref{lil:D}.

    Next, we prove \eqref{lil:E}. 
    To this end, we apply Dudley's theorem \cite{Dudley}, \eqref{E:bd}, and $\varphi^{-1}_2(r) \asymp r (\log_+ (1/r))^{-1/2}$ as $r\to0^+$ to deduce that there exist $c_3,c_4>0$ such that for all large $n$,
    \begin{align*}
        &\E\sup_{h \in [e^{-n-1},e^{-n}]} |\mathscr{E}_n(h)| \lesssim \int_0^{c_3 e^{-n}} \sqrt{\log\bigg( \frac{c_4 e^{-n}}{r/\sqrt{\log(1/r)}} \bigg)} \, \d r\\
        & \lesssim \int_0^{c_3e^{-n}} \left[\sqrt{\log(c_4 e^{-n}/r)} + \sqrt{\log\log(1/r)} \right] \d r\\
        & \lesssim e^{-n} + e^{-n} \sqrt{\log\log e^n} + \int_0^{c_3 e^{-n}} \frac{\d r}{\log(1/r)\sqrt{\log\log(1/r)}}\\
        & \lesssim e^{-n} + e^{-n}\sqrt{\log n} + e^{-n}\\
        &\lesssim e^{-n}\sqrt{\log n},
    \end{align*}
    where we have used $\sqrt{a+b}\le \sqrt{a}+\sqrt{b}$ for $a,b\ge 0$ in the second inequality and integration by parts in the third inequality.
    Let $\varepsilon >0$.
    Then, thanks to the preceding, Borell's inequality \cite{Borell}  and \eqref{E:bd}, there exists $c_5>0$ such that for all large $n$,
    \begin{align*}
        &\P\left\{ \sup_{h \in [e^{-n-1},e^{-n}]} |\mathscr{E}_n(h)| > \varepsilon e^{-n}\sqrt{n \log\log n} \right\}\\
        &\le P\left\{ \sup_{h \in [e^{-n-1},e^{-n}]} |\mathscr{E}_n(h)| - \E \sup_{h \in [e^{-n-1},e^{-n}]} |\mathscr{E}_n(h)| > \tfrac12 \varepsilon e^{-n}\sqrt{n \log\log n} \right\}\\
        & \le 2 \exp\left( - \frac{4^{-1}\varepsilon^2 e^{-2n} n \log\log n}{2 \sup_{h \in [e^{-n-1},e^{-n}]}\|\mathscr{E}_n(h)\|_2^2} \right) \le 2 \exp\left( -\frac{\varepsilon^2 n \log\log n}{c_5}  \right).
    \end{align*}
    Hence, the Borel-Cantelli lemma, a monotonicity argument, and the arbitrariness of $\varepsilon >0$ imply \eqref{lil:E}.
\end{proof}

Thanks to Lemma \ref{lem:lil:D:E}, we obtain the following:
\begin{align*}
    &\lim_{n\to \infty}\sup_{h \in [e^{-n-1},e^{-n}]} \frac{|u(t_0\,,x_0+h)-u(t_0\,,x_0)|}{h \sqrt{\log(1/h) \log\log\log(1/h)}}\\
    & \le \lim_{n\to \infty}\sup_{h \in [e^{-n-1},e^{-n}]} \frac{|\mathscr{D}(n)h|+|\mathscr{E}_n(h)|}{h \sqrt{\log(1/h) \log\log\log(1/h)}}\\
    & \le \limsup_{n \to \infty} \frac{|\mathscr{D}(n)|}{\sqrt{n \log\log n}} + 0
    \le C \quad \text{a.s.}
\end{align*}
Hence, $K_6 \le C < \infty$. This proves \eqref{lm:SPDE:x}.

Next, we prove \eqref{lm:SPDE:t}.
Let $I \subset (0\,,\infty)$ be a compact interval containing the fixed point $t_0>0$.
Lemma \ref{lem:SPDE:reg} implies that
\[
    \|u(t\,,x_0)-u(s\,,x_0)\|_2 \asymp \varphi_1(|t-s|) := |t-s|^{1/\alpha} \quad \forall t,s \in I,
\]
and $\varphi_1$ satisfies \eqref{lm:phi}.
Also, since $\{u(t\,,x_0)\}_{t \in I}$ satisfies SLND \cite[Theorem 1.3]{CLX}, condition \eqref{lm:cv} holds for $|t_n-t_0| = 2^{-n}$ (see Proposition \ref{prop:SLND}).
Therefore, Theorem \ref{th:lm} implies that \eqref{lm:SPDE:t} holds with $0<K_7<\infty$.

It remains to prove \eqref{lm:SPDE:tx}.
By Lemma \ref{lem:SPDE:reg}, Assumption \ref{a:sigma:aniso} is satisfied.
By \eqref{SPDE:psi} and \eqref{SPDE:psi:inv},
\begin{align*}
    \int_0^1 \frac{\psi^{-1}(hr)\, \d r}{r\sqrt{\log(2/r)}}
    &\lesssim  \psi^{-1}(h) \int_0^1 \frac{r^{1/(\alpha+1)}\big(\frac{\log_+(1/(hr))}{\log_+(1/h)}\big)^{1/(2\alpha+2)}\, \d r}{r\sqrt{\log(2/r)}}\\
    & \lesssim \psi^{-1}(h) \int_0^1 \frac{r^{1/(\alpha+1)}\, \d r}{r\sqrt{\log(2/r)}} \quad \text{as $h\to0^+$}
\end{align*}
by the dominated convergence theorem.
Since the last integral is finite, Assumption \ref{a:psi:lil} is satisfied with $\ell(x) = \log\log(x)$.
Hence, Theorem \ref{th:lm:aniso} implies that \eqref{lm:SPDE:tx} holds with $K_8<\infty$.
Finally, by \eqref{lm:SPDE:t},
\begin{align*}
    &\lim_{h\to0^+} \sup_{z: 0<\phi(z, z_0) \le h} \frac{|u(z)-u(z_0)|}{\phi(z\,,z_0)\sqrt{\log\log(1/\phi(z\,,z_0))}}\\
    & \ge \lim_{h\to0^+} \sup_{t: 0<|t-t_0|^{1/\alpha} \le h} \frac{|u(t\,,x_0)-u(t_0\,,x_0)|}{|t-t_0|^{1/\alpha}\sqrt{\log\log(1/|t-t_0|^{1/\alpha})}} = K_7>0.
\end{align*}
Hence $K_8>0$.
\qed

\section{Gaussian Volterra processes}
\label{s:volterra}

In this section, we discuss another example about a class of Gaussian Volterra processes.
Fix $T>0$, and define
\[
    X(t) = \int_0^t \kappa(t-r)\, \d B(r) \quad \forall t \in [0\,,T],
\]
where $\{B(t)\}_{t \ge 0}$ is a standard Brownian motion. 
Mocioalca and Viens \cite{MV05} have studied, among other things, continuity properties and moduli of continuity of $X = \{X(t)\}_{t \in [0,T]}$.
We consider a setting similar to \cite{MV05}.

\begin{assumption}\label{a:kappa}
    $\kappa(r) = \sqrt{(\phi^2)'(r)}$ where $\phi: [0\,,T] \to (0\,,\infty)$ is non-decreasing, continuous, $C^1$ except perhaps at 0, $\phi(0)=0$ and $(\phi^2)' \ge 0$ is non-increasing.
\end{assumption}

\begin{lemma}\label{lem:volterra}
Under Assumption \ref{a:kappa}, we have:
\begin{enumerate}[$(i)$]
    \item $\phi(|t-s|) \le \|X(t)-X(s)\|_2 \le \sqrt{2}\, \phi(|t-s|)$ uniformly for all $t,s \in [0\,,T]$; and\smallskip
    \item One-sided SLND:
    \[
        \Var(X(t)|X(t_1),\dots, X(t_n)) \ge \phi^2(t-t_n) \ge \tfrac12 \Var(X(t)-X(t_n))
    \]
    uniformly for all $n \in \N$ and $0 \le t_1 < \cdots < t_n \le t \le T$.
\end{enumerate}
\end{lemma}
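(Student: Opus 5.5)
Both parts rest on the Itô isometry. For $0\le s\le t\le T$ we split
\[
X(t)-X(s)=\int_s^t \kappa(t-r)\,\d B(r)+\int_0^s[\kappa(t-r)-\kappa(s-r)]\,\d B(r),
\]
and the two terms are independent because they integrate over disjoint time intervals. Write $\kappa^2=(\phi^2)'$ and $\phi(0)=0$. The first term has variance $\int_0^{t-s}\kappa^2(u)\,\d u=\phi^2(t-s)$, which immediately gives the lower bound in $(i)$.

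For the upper bound in $(i)$, set $h=t-s$ and bound the second variance
\[
\int_0^s[\kappa(u+h)-\kappa(u)]^2\,\d u .
\]
Assumption \ref{a:kappa} says $\kappa^2$ is non-increasing, so $\kappa$ is non-increasing and nonnegative. Hence
\[
[\kappa(u)-\kappa(u+h)]^2\le (\kappa(u)-\kappa(u+h))(\kappa(u)+\kappa(u+h))=\kappa^2(u)-\kappa^2(u+h).
\]
Integrating over $u\in[0,s]$ telescopes:
\[
\int_0^s[\kappa^2(u)-\kappa^2(u+h)]\,\d u=\phi^2(s)-\phi^2(s+h)+\phi^2(h)\le \phi^2(h),
\]
where the last step uses that $\phi$ is non-decreasing. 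Adding the two variances gives $\|X(t)-X(s)\|_2^2\le 2\phi^2(t-s)$. The only point needing care is integrability of $\kappa^2$ near $0$, which holds because $\int_0^h\kappa^2=\phi^2(h)-\phi^2(0^+)=\phi^2(h)$ by continuity of $\phi$ at $0$ and $C^1$ away from $0$.

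For $(ii)$ we use the filtration. Each $X(t_i)$ with $t_i\le t_n$ is $\mathcal F_{t_n}$-measurable, where $\mathcal F_{t_n}$ is the Brownian filtration at time $t_n$. Conditional variance given a smaller sigma-field is at least that given a larger one in the Gaussian/$L^2$ projection sense, so
\[
\Var(X(t)\mid X(t_1),\dots,X(t_n))\ge \E\big[(X(t)-\E[X(t)\mid\mathcal F_{t_n}])^2\big].
\]
Now $X(t)-\E[X(t)\mid\mathcal F_{t_n}]=\int_{t_n}^t\kappa(t-r)\,\d B(r)$, whose variance is $\phi^2(t-t_n)$. By the upper bound in $(i)$, $\phi^2(t-t_n)\ge \tfrac12\Var(X(t)-X(t_n))$.

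The main (mild) obstacle is the telescoping inequality in $(i)$. This is where the monotonicity of $(\phi^2)'$ is essential, and we have to justify the change of variables and the finiteness of the integrals near the singularity at $0$.
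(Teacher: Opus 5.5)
Your proof is correct and follows essentially the paper's route. For $(ii)$, the paper writes the conditional variance as an infimum over linear combinations $\sum a_iX(t_i)$ and drops the nonnegative $\int_0^{t_n}$ term, which is the same orthogonal splitting at time $t_n$ as your projection onto $\mathcal F_{t_n}$. For $(i)$, the paper only cites the proof of Proposition 1 in \cite{MV05}, and your telescoping bound $(\kappa(u)-\kappa(u+h))^2\le\kappa^2(u)-\kappa^2(u+h)$ supplies that step in self-contained form.
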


\begin{proof}
First, (i) follows from the same proof of Proposition 1 in \cite{MV05}. Next, for any $n \in \N$ and $0 \le t_1 < \cdots < t_n \le t$, we have
\begin{align*}
    &\Var(X(t)|X(t_1),\dots, X(t_n))
    = \inf_{a_1,\dots, a_n\in \R} \E\left[ \left( X(t) - \sum_{i=1}^n a_i X(t_i) \right)^2 \right]\\
    &= \inf_{a_1,\dots, a_n\in \R} \left[ \int_{t_n}^t \kappa^2(t-r)\, \d r + \int_0^{t_n} \left( \kappa(t-r) - \sum_{i=1}^n a_i \kappa(t_i-r) {\bf 1}_{[0,t_i]}(r) \right)^2 \, \d r \right]\\
    & \ge \int_{t_n}^t \kappa^2(t-r)\, \d r = \phi^2(t-t_n) \ge \tfrac12  \Var(X(t)-X(t_n)),
\end{align*}
where the last inequality follows from (i). This proves (ii).
\end{proof}

Thanks to Lemma \ref{lem:volterra}, if $\phi$ is regularly varying of index $\theta\in (0\,,1]$, then Theorems \ref{th:um} and \ref{th:lm} can be applied to obtain exact uniform and local moduli of continuity for $X$ (see Remark \ref{rmk:theta>0} and Proposition \ref{prop:SLND}). 
We refer to \cite{MV05, H23} for related results.

It follows from \eqref{Fernique} that if $\phi(r) \gtrsim (\log(1/r))^{-1/2}$, then $X$ is a.s.~not continuous.
In \cite{MV05}, the process $X$ with
\[
    \kappa^2(r) = (\phi^2)'(r) = \beta r^{-1}(\log(1/r))^{-\beta-1}, \quad \phi^2(r) = (\log(1/r))^{-\beta} \quad (\beta>1)
\]
is studied and referred to as the logarithmic Brownian motion with parameter $\beta$.
Next, we discuss the exact moduli of continuity of $X$ in a similar regime where $\phi$ is slowly varying and $X$ is a.s.~continuous but not H\"older continuous.

\begin{assumption}\label{a:sv}
The function $\phi$ is slowly varying at 0 and takes the form
\begin{align*}
    \phi(r) = \frac{L(\log(1/r))}{(\log(1/r))^a}
\end{align*}
where $L$ is slowly varying at infinity.
\end{assumption}

\begin{example}
Assumption \ref{a:kappa} holds on $[0\,,T]$ for some $T>0$ if $L$ is one of the following: $(i)$ $L\equiv 1$; $(ii)$ $L(x) = \log x$; or $(iii)$ $L(x) = \exp(\sqrt{\log x})$.
\end{example}

\begin{lemma}\label{lem:vol:cond}
    If $\phi$ satisfies Assumptions \ref{a:kappa} and \ref{a:sv} with $a>1/2$, then
    \begin{enumerate}[$(i)$]
        \item $\int_0^h \frac{\phi(r) \, \d r}{r\sqrt{\log(1/r)}} \lesssim \phi(h) \sqrt{\log(1/h)} = o(1)$ as $h \to 0^+$;
        \item $\int_0^1 \frac{\phi(hr) \, \d r}{r \sqrt{\log(2/r)}} \lesssim \phi(h) \sqrt{\log(1/h)} = o(1)$ as $h \to 0^+$;
        \item For any $\gamma > 1$, there exist $C_\gamma>0$, and $n_\gamma \ge 1$ such that
        \begin{align*}
            \Var(X(t_n)-X(t_0)|X(t_m)-X(t_0)) \ge C_\gamma \Var(X(t_n)-X(t_0))
        \end{align*}
        uniformly for all $m > n \ge n_\gamma$, where $t_n = t_0 + n^{-\gamma}$.
    \end{enumerate}
\end{lemma}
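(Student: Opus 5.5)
For (i), I would substitute $x=\log(1/r)$ and apply Karamata's theorem; for (ii), split the integral at $r=h$ and reduce the small-$r$ part to (i); for (iii), use the one-sided SLND of Lemma \ref{lem:volterra}$(ii)$ together with slow variation of $\phi$.

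\emph{Part (i).} Write $X_h=\log(1/h)$ and set $x=\log(1/r)$, so that $\d r/r=-\d x$ and $\phi(r)=L(x)x^{-a}$. Then
\[
\int_0^h \frac{\phi(r)\,\d r}{r\sqrt{\log(1/r)}}=\int_{X_h}^\infty L(x)\,x^{-a-1/2}\,\d x .
\]
Since $a>1/2$ and $L$ is slowly varying at infinity, Karamata's theorem (see \cite{BGT}) gives
\[
\int_{X_h}^\infty L(x)\,x^{-a-1/2}\,\d x\sim \frac{L(X_h)\,X_h^{1/2-a}}{a-1/2}=\frac{\phi(h)\sqrt{\log(1/h)}}{a-1/2}.
\]
Moreover $\phi(h)\sqrt{\log(1/h)}=L(X_h)X_h^{1/2-a}\to0$, because a slowly varying function is $o(x^{\epsilon})$ for every $\epsilon>0$.

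\emph{Part (ii).} Split $\int_0^1$ into $\int_0^h$ and $\int_h^1$.
\begin{itemize}
\item On $[h,1]$, monotonicity of $\phi$ gives $\phi(hr)\le\phi(h)$. Also $\int_h^1 \frac{\d r}{r\sqrt{\log(2/r)}}\le 2\sqrt{\log(2/h)}\lesssim\sqrt{\log(1/h)}$. So this part is $\lesssim\phi(h)\sqrt{\log(1/h)}$.
\item On $(0,h)$, for $h<1$ we have $hr\le r$, hence $\phi(hr)\le\phi(r)$. We also have $\log(2/r)\ge\log(1/r)$. So this part is bounded by the integral in (i), which is $\lesssim\phi(h)\sqrt{\log(1/h)}$.
\end{itemize}

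\emph{Part (iii).} Take $m>n$. Then $t_0<t_m<t_n$, and these points lie in $[0,T]$ once $n$ is large.
\begin{itemize}
\item Conditioning on more variables lowers conditional variance. Hence
\[
\Var(X(t_n)-X(t_0)\,|\,X(t_m)-X(t_0))\ge\Var(X(t_n)\,|\,X(t_0),X(t_m)).
\]
\item By the one-sided SLND in Lemma \ref{lem:volterra}$(ii)$, applied with ordered points $t_0<t_m\le t_n$, the right side is at least $\phi^2(t_n-t_m)$.
\item Since $m\ge n+1$, we get $t_n-t_m\ge n^{-\gamma}-(n+1)^{-\gamma}\ge c_\gamma n^{-\gamma-1}$.
\item Now $\log(1/(c_\gamma n^{-\gamma-1}))\sim\frac{\gamma+1}{\gamma}\log(n^\gamma)$. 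By slow variation of $L$,
\[
\frac{\phi(c_\gamma n^{-\gamma-1})}{\phi(n^{-\gamma})}\to\Big(\frac{\gamma}{\gamma+1}\Big)^{a}>0 .
\]
Here Assumption \ref{a:sv} is what makes this limit strictly positive, even though $t_n-t_m$ and $t_n-t_0$ are not comparable polynomially.
\item Therefore $\phi^2(t_n-t_m)\ge c'\phi^2(n^{-\gamma})=c'\phi^2(t_n-t_0)$. By Lemma \ref{lem:volterra}$(i)$, this is $\ge \tfrac{c'}{2}\Var(X(t_n)-X(t_0))$ for all $m>n\ge n_\gamma$.
\end{itemize}

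The main step is this comparison in (iii); everything else is direct bookkeeping. The only input needed is the uniform convergence theorem for slowly varying functions.
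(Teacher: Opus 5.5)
Your proposal is correct and follows the paper's proof essentially step for step: the substitution $x=\log(1/r)$ with Karamata's theorem \cite[Proposition 1.5.10]{BGT} for (i), the split at $r=h$ using monotonicity of $\phi$ for (ii), and for (iii) the chain $\Var(X(t_n)-X(t_0)|X(t_m)-X(t_0))\ge \Var(X(t_n)|X(t_m),X(t_0))\ge\phi^2(t_n-t_m)$ via one-sided SLND, followed by slow variation of $L$. The differences in (iii) are cosmetic. The paper bounds $t_n-t_{n+1}\ge (t_n-t_0)^\nu$ for a fixed $\nu>1+1/\gamma$, so it only needs slow variation at the single ratio $\nu$. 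You instead compare $\phi(c_\gamma n^{-\gamma-1})$ with $\phi(n^{-\gamma})$ directly through the uniform convergence theorem; when you do, state explicitly that monotonicity of $\phi$ is what lets you pass from $t_n-t_m$ to $c_\gamma n^{-\gamma-1}$.
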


\begin{proof}
First, by the change of variable $r=e^{-x}$ and \cite[Proposition 1.5.10]{BGT} (since $a+1/2>1$),
\begin{align*}
    \int_0^h \frac{\phi(r)\, \d r}{r\sqrt{\log(1/r)}} = \int_{\log(1/h)}^\infty \frac{L(x) \, \d x}{x^{a+1/2}} \lesssim \frac{L(\log(1/h))}{(\log(1/h))^{a-1/2}} \quad \text{as $h \to 0^+$.}
\end{align*}
The last expression is equal to $\phi(h)\sqrt{\log(1/h)}$, which is $o(1)$ since $a>1/2$.
This proves (i).
Next, by monotonicity of $\phi$ and (i), as $h\to0^+$,
\begin{align*}
    \int_0^1 \frac{\phi(hr) \, \d r}{r \sqrt{\log(2/r)}}
    &\le \int_0^h \frac{\phi(r)\, \d r}{r\sqrt{\log(2/r)}} + \int_h^1 \frac{\phi(h)\, \d r}{r \sqrt{\log(2/r)}} \lesssim \phi(h)\sqrt{\log(1/h)},
\end{align*}
which yields (ii). Finally, for any $\gamma>1$, we have
\begin{align*}
    t_n-t_{n+1} &= n^{-\gamma} - (n+1)^{-\gamma} = n^{-\gamma} (n+1)^{-\gamma} ((n+1)^\gamma-n^\gamma)\\
    &\ge n^{-\gamma} (n+1)^{-\gamma} (\gamma n^{\gamma-1})
    = \gamma n^{-1} (n+1)^{-\gamma},
\end{align*}
where the inequality follows from the mean value theorem. So, there is $\nu > 1+1/\gamma$ such that $t_n-t_{n+1} \ge (t_n-t_0)^\nu$ for all large $n$.
Then, by one-sided SLND (Lemma \ref{lem:volterra}(ii)), monotonicity of $\phi$ and the slowly varying property of $L$, we can find $C_\gamma>0$ and $n_\gamma \ge 1$ such that
\begin{align*}
    &\Var(X(t_n)-X(t_0)|X(t_m)-X(t_0))
    \ge \Var(X(t_n)|X(t_m),X(t_0))\\
    &\ge \phi^2(t_n-t_m)
    \ge \phi^2((t_n-t_0)^\nu)
    = \frac{L^2(\nu\log(1/(t_n-t_0)))}{(\nu\log(1/(t_n-t_0)))^{2a}}\\
    &\ge C_\gamma \frac{L^2(\log(1/(t_n-t_0)))}{(\log(1/(t_n-t_0)))^{2a}} = C_\gamma \phi^2(t_n-t_0) \ge \frac{C_\gamma}{2} \Var(X(t_n)-X(t_0))
\end{align*}
uniformly for all $m > n \ge n_\gamma$.
This completes the proof.
\end{proof}

Finally, we present the following result, which shows that the exact uniform and local modulus functions are the same.
A similar behavior is known for Gaussian processes with stationary increments where $\sigma(t\,,s) = \phi(|t-s|) = (\log(1/|t-s|))^{-a}$ and $a>1/2$ (see \cite{M68}).

\begin{theorem}
    Fix $T>0$ and $t_0 \in [0\,,T]$.
    If $\phi$ satisfies Assumptions \ref{a:kappa} and \ref{a:sv} with $a>1/2$, then there is a constant $C \in (0\,,\infty)$ such that
    \begin{align}\label{um:volterra}
        \lim_{h\to0^+} \sup_{t,s \in [0,T]: 0<|t-s|\le h} \frac{|X(t)-X(s)|}{\phi(|t-s|)\sqrt{\log(1/|t-s|)}} = C \quad \text{a.s.}
    \end{align}
    and there is a non-random number $K=K(t_0) \in (0\,,\infty)$ such that
    \begin{align}\label{lm:volterra}
        \lim_{h\to0^+}\sup_{t\ge 0: 0<|t-t_0| \le h} \frac{|X(t)-X(t_0)|}{\phi(|t-t_0|)\sqrt{\log(1/|t-t_0|)}} = K \quad \text{a.s.}
    \end{align}
\end{theorem}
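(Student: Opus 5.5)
The plan is to apply Theorem \ref{th:um:aniso} for \eqref{um:volterra} and Theorem \ref{th:lm:aniso}, Case (2), for \eqref{lm:volterra}, both with $N=1$ and $\phi_1=\phi$. Since $\phi$ is slowly varying, $\theta=0$. Lemma \ref{lem:volterra}(i) gives Assumption \ref{a:sigma:aniso} with $C_1=1$ and $C_2=\sqrt2$. Here $\psi=\phi^{-1}$ and $\psi^{-1}=\phi$, so $\log(1/\psi(\phi(|t-s|)))=\log(1/|t-s|)$ and the normalizations in \eqref{um:aniso} and \eqref{lm:aniso} (the latter with $\ell=\log$) are exactly the ones in the statement. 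Lemma \ref{lem:01} supplies the zero-one law. Note that Remark \ref{rmk:theta>0} does not apply because $\theta=0$, so \eqref{psi:1} and \eqref{psi:lil:1} must be checked directly. Both follow from $\phi(h)\sqrt{\log(1/h)}=L(\log(1/h))(\log(1/h))^{1/2-a}\to0$, since $a>1/2$. Lemma \ref{lem:vol:cond}(i) gives \eqref{psi:2}, and Lemma \ref{lem:vol:cond}(ii) gives \eqref{psi:lil:2} with $\ell(x)=\log x$.

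For the lower bound in the local statement, I would take $t_n=t_0+n^{-\gamma}$ for some fixed $\gamma>1$; if $t_0=T$, I would use $t_n=t_0-n^{-\gamma}$ instead. The one-sided SLND argument of Lemma \ref{lem:vol:cond}(iii) applies in the mirrored direction as well, since $t_0<t_m<t_n$ or $t_n<t_m<t_0$ can each be ordered for Lemma \ref{lem:volterra}(ii). By Lemma \ref{lem:cv:cov}, this gives Assumption \ref{a:cv:lil} with $h_n\asymp|t_n-t_0|=n^{-\gamma}$, because $\psi(\phi(r))=r$. That is exactly Case (2) of Theorem \ref{th:lm:aniso}, and so $0<K<\infty$.

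For the uniform lower bound, I would verify Assumption \ref{a:cv:aniso} with some $\rho>1$, and this is the step I expect to be the main obstacle. Take $|t-s|\le r$, $|t'-s'|\le r$ and $|s-s'|\ge A_0r^\rho$, where $r=\phi^{-1}(h)$. After relabeling I would order the four points, with $s\le t$ say. The goal is to use Lemma \ref{lem:volterra}(ii), together with $\Var(X(t)-X(s)\mid X(t')-X(s'))\ge\Var(X(b)\mid\text{other three})$, where $b$ is the largest of the four points. This gives the bound $\phi^2(\text{gap})$, where the gap is at least about $r^\rho$ if the pair $\{t',s'\}$ lies below $\{s,t\}$. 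For the increment whose larger endpoint is $b$, we have $\phi^2(\text{gap})\ge\phi^2(r^\rho)\gtrsim\phi^2(r)$, by the slowly varying form $\phi(r^\rho)=L(\rho\log(1/r))/(\rho\log(1/r))^a\asymp\phi(r)$. This matches the computation in Lemma \ref{lem:vol:cond}(iii).

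The delicate point is that the one-sided bound controls only the increment of the later pair. I would therefore use symmetry of the correlation. Lemma \ref{lem:cv:cov}(b) is symmetric in $Y$ and $Z$, so it suffices to lower bound the conditional variance of whichever increment lies later in time. If the intervals overlap, then $|s-s'|\ge A_0r^\rho$ with $A_0r^\rho\gg 2r$ is impossible once $\rho<1$. So I would instead choose $\rho\in(0,1)$, which is still allowed since $\rho>0$. Then $A_0r^\rho\ge 3r$ for small $r$, the intervals are disjoint, and the gap is at least $r^\rho/2$. Monotonicity of $\phi$ gives $\phi(\text{gap})\ge\phi(r/2)\gtrsim\phi(r)$, which is even easier. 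Then Theorem \ref{th:um:aniso} yields $0<C<\infty$.
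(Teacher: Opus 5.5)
Your plan follows the paper's route. The paper obtains \eqref{um:volterra} from Theorem~\ref{th:um}, which is Theorem~\ref{th:um:aniso} with $N=1$; it checks \eqref{um:cv} from the one-sided SLND of Lemma~\ref{lem:volterra}(ii) as in Proposition~\ref{prop:SLND}, with $\rho=1$, $a_0=4$. It obtains \eqref{lm:volterra} from Case~(2) of Theorem~\ref{th:lm:aniso} via Lemma~\ref{lem:vol:cond}(ii)--(iii). Taking $\rho\in(0,1)$ instead only weakens the explicit lower bound on $C$. Your left-approaching sequence for $t_0=T$ covers a case the paper passes over.

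\textbf{The flawed step.} Your justification of the decorrelation step in the uniform part is wrong as written. Suppose the two intervals are disjoint, the later one is $[p,q]$, and the other two points satisfy $p',q'\le p$. Then the largest conditioning point below $q$ is its own partner $p$. So Lemma~\ref{lem:volterra}(ii) gives $\Var(X(q)\mid X(p),X(p'),X(q'))\ge\phi^2(q-p)$, not $\phi^2(\mathrm{gap})$.

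Your claimed bound is in fact false in general. The left side is at most $\Var(X(q)-X(p))\le 2\phi^2(q-p)$, and $q-p$ can be arbitrarily small relative to the gap. For example, with $\phi(r)=(\log(1/r))^{-a}$ the claimed inequality fails as soon as $q-p<\mathrm{gap}^{\,2^{1/(2a)}}$. You have transplanted the nested configuration $t_0<t_m<t_n$ of Lemma~\ref{lem:vol:cond}(iii). There the point just below $t_n$ is $t_m$, not the partner $t_0$, and only in that configuration do the gap and slow variation matter.

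\textbf{The repair.} It is immediate and simpler than what you wrote. For disjoint intervals,
$\Var(X(q)-X(p)\mid X(q')-X(p'))\ge\Var(X(q)\mid X(p),X(p'),X(q'))\ge\phi^2(q-p)\ge\tfrac12\Var(X(q)-X(p))$
by Lemma~\ref{lem:volterra}. By the symmetry of Lemma~\ref{lem:cv:cov}(b), this gives Assumption~\ref{a:cv:aniso} with $C_0=1/2$. Only disjointness of the intervals is used, which your $\rho<1$ guarantees (as would $\rho=1$, $A_0>2$). The size of the gap and the slowly varying form of $\phi$ play no role here.

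The same observation makes your mirrored local case $t_n<t_m<t_0$ precise:
$\Var(X(t_m)-X(t_0)\mid X(t_n)-X(t_0))\ge\Var(X(t_0)\mid X(t_n),X(t_m))\ge\phi^2(t_0-t_m)\ge\tfrac12\Var(X(t_m)-X(t_0))$.
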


\begin{proof}
Thanks to Lemma \ref{lem:volterra} and Lemma \ref{lem:vol:cond}(i), we may apply Theorem \ref{th:um} to obtain \eqref{um:volterra}.
Next, Lemma \ref{lem:volterra}(i) and Lemma \ref{lem:vol:cond}(ii)--(iii) imply that Assumptions \ref{a:sigma:aniso}, \ref{a:psi:lil} and \ref{a:cv:lil} hold with $\ell(x) = \log(x)$ and $h_n = n^{-\gamma}$.
Hence, an application of Theorem \ref{th:lm:aniso} yields \eqref{lm:volterra}.
\end{proof}

{\bf Acknowledgment.}
C.Y. Lee was supported in part by the Shenzhen Peacock grant 2025TC0013.

\bibliography{ref}
\bibliographystyle{abbrv}

\end{document}